\numberwithin{equation}{section}
\newtheorem{theorem}{Theorem}[section]
\newtheorem{corollary}[theorem]{Corollary}
\newtheorem{lemma}[theorem]{Lemma}
\newtheorem{prop}[theorem]{Proposition}
\newtheorem{deff}[theorem]{Definition}
\newtheorem{remark}[theorem]{Remark}
\theoremstyle{definition}
\newtheorem{example}[theorem]{Example}
\newcommand{\lpl}{\left\langle}
\newcommand{\rer}{\right\rangle}
\renewcommand{\theta}{\vartheta}
\newcommand{\eps}{\varepsilon}
\newcommand{\mc}{\mathcal}
\newcommand{\mbf}{\mathbf}
\renewcommand{\l}{\left(}
\renewcommand{\r}{\right)}
\renewcommand{\ll}{\left[}
\newcommand{\rr}{\right]}
\renewcommand{\lll}{\left\{}
\newcommand{\rrr}{\right\}}
\title{Instability of boundary layers with the Navier boundary condition}
\date{}
\author{{Lorenzo Quarisa}\thanks{LQ is supported by an EU Chancellor's scholarship from the University of Warwick. } \\
	Mathematics Institute\\
	University of Warwick\\
	Coventry CV47AL, United Kingdom \\
	\texttt{lorenzo.quarisa@warwick.ac.uk}  \\
	%% examples of more authors
	\And
{José L. Rodrigo} \\
	Mathematics Institute\\
	University of Warwick\\
	Coventry CV47AL, United Kingdom \\
	\texttt{j.rodrigo@warwick.ac.uk} \\
	%% \AND
	%% Coauthor \\
	%% Affiliation \\
	%% Address \\
	%% \texttt{email} \\
	%% \And
	%% Coauthor \\
	%% Affiliation \\
	%% Address \\
	%% \texttt{email} \\
	%% \And
	%% Coauthor \\
	%% Affiliation \\
	%% Address \\
	%% \texttt{email} \\
}
\begin{document}
    
\maketitle

\begin{abstract}
	 We study the $L^{\infty}$ stability of the Navier-Stokes equations with a viscosity-dependent Navier friction boundary condition  around shear profiles which are linearly unstable for the Euler equation. The dependence from the viscosity is given in the Navier boundary condition as $\partial_y u = \nu^{-\gamma}u$ for some $\gamma\in\mathbb{R}$, where $u$ is the tangential velocity. With the no-slip boundary condition, which corresponds to the limit $\gamma \to +\infty$, a celebrated result from E. Grenier \cite{grenier} provides an instability of order $\nu^{1/4}$. Paddick \cite{paddick} proved the same result in the case $\gamma=1/2$, furthermore improving the instability to order one. In this paper, we extend these two results to all $\gamma \in \mathbb{R}$, obtaining an instability of order $\nu^{\theta}$, where in particular $\theta=0$ for $\gamma \leq 1/2$ and $\theta=1/4$ for $\gamma \geq 3/4$. When $\gamma \geq 1/2$, the result denies the validity of the Prandtl boundary layer expansion around the chosen shear profile.
\end{abstract}

% keywords can be removed
\keywords{Boundary layers \and Prandtl expansion \and Inviscid limit \and Navier friction boundary condition \and Nonlinear instability.  }

\section{Introduction}
A central problem in mathematical fluid dynamics is the approximation of inviscid flows with low viscosity flows, especially when boundaries are present  \cite{E}, \cite{maekawa}. When a boundary is present, the theory is especially suspectible to the type of boundary condition prescribed for the viscous flow. In this paper, we will focus on a \emph{Navier boundary condition}, which allows slip at the boundary with a slip length depending on a power of the viscosity. 

Assume that our spatial domain is two-dimensional with a flat boundary, e.g.  $\mathbb{R}\times \mathbb{R}_+$ or $\mathbb{T}\times \mathbb{R}_+$, and let $\nu>0$ be the viscosity. A viscous flow $\mbf{u}^{\nu}=(u^{\nu}(t,x,y),v^{\nu}(t,x,y))$ is assumed to satisfy the \emph{Navier-Stokes equations} with the \emph{no-slip boundary condition}
\begin{equation}\label{eq:nsnoslip}\begin{cases}\partial_t\mbf{u}^{\nu}+\mbf{u}^{\nu}\cdot \nabla \mbf{u}^{\nu}+\nabla p^{\nu}=\nu \Delta \mbf{u}^{\nu};\\
\nabla \cdot \mbf{u}^{\nu}=0;\\
\mbf{u}^{\nu}=0& \text{at }y=0;\end{cases} \end{equation} 
whereas an inviscid flow $\mbf{u}^E=(u^E(t,x,y),v^E(t,x,y))$ satisfies the \emph{Euler equations}
\begin{equation}\label{eq:euler}\begin{cases}\partial_t\mbf{u}^{E}+\mbf{u}^{E}\cdot \nabla \mbf{u}^{E}+\nabla p^{E}=0;\\
\nabla \cdot \mbf{u}^{E}=0;\\
v^{E}=0& \text{at }y=0.\end{cases}
\end{equation}
The main obstacle to the convergence of $\mbf{u}^{\nu}$ to $\mbf{u}^E$ as $\nu\to 0$ is the contrast between the boundary conditions \eqref{eq:nsnoslip}$_3$ and \eqref{eq:euler}$_3$. 

To overcome this issue, in 1904 Prandtl  \cite{prandtl} proposed the existence of a boundary layer with a  \emph{size} of order $\sqrt{\nu}$ where the fluid undergoes a transition from non-viscous flow to match the boundary condition \eqref{eq:nsnoslip}$_3$. 
In the boundary layer, some new equations for the flow may be derived, called the
\emph{Prandtl equations}.  These are formally obtained by applying the change of variables $y\mapsto \tilde{y}:=y/\sqrt{\nu}$, $(u^{\nu},v^{\nu})\mapsto (u^P,v^P):=\l u^{\nu},v^{\nu}/{\sqrt{\nu}}\r $ to \eqref{eq:nsnoslip}:
\begin{equation}\label{eq:prandtl}\begin{cases}\partial_t u^P + u^P\partial_x u^P+ v^P\partial_{\tilde{y}} u^P+\partial_x p^P = \partial_{\tilde{y}\tilde{y}}u^P;\\
\partial_x u^P+\partial_{\tilde{y}}v^P=0;\\
\lim_{\tilde{y}\to +\infty}u^P=\left.u^E\right|_{y=0};\\
 u^P=v^P=0& \text{ at }\tilde{y}=0.\end{cases}
\end{equation}
Moreover, Prandtl's model predicts that, given
$$\mbf{u}^b=(u^b,\sqrt{\nu}v^b):=(u^P-\left.u^E\right|_{y=0},\sqrt{\nu}v^P) $$
then
the following \emph{boundary layer expansion} holds:
\begin{equation}\label{eq:blexp0}\mbf{u}^{\nu}(t,x,y)\sim \mbf{u}^E(t,x,y)+ \mbf{u}^b\l t,x,\frac{y}{\sqrt{\nu}}\r\qquad \text{ as }\nu \to 0. \end{equation}
A long-standing problem is whether the above formula is mathematically valid. There are different questions which may be formulated in this context. For instance, the problem of the well-posedness of the Prandtl equations \eqref{eq:prandtl} is still open depending on the functional setting - see for instance the review \cite{maekawa}, Section 3.4. In this paper, we will focus on the instability in time of \eqref{eq:blexp0}.

Grenier \cite{grenier} showed the nonlinear instability of the above expansion around a certain class of \emph{shear profiles}. Namely, fix a profile $U_s\in C^{\infty}(\mathbb{R}_+)$, such that $\lim_{y\to +\infty}U_s(y)=U_{\infty}\in \mathbb{R}$. Suppose that $U_s$ is linearly unstable for the Euler equations, meaning there exists an exponentially growing solution to the linearized Euler equations around $U_s$ (see Definition \ref{defunst}). Now let $u_s(t,\tilde{y})$ be the unique evolution of $U_s$ given by the heat equation
$$ \begin{cases} \partial_t u_s(t,\tilde{y})=\partial_{\tilde{y}\tilde{y}}u_s(t,\tilde{y});\\ 
u_s(t,0)=0;\\ 
u_s(0,\tilde{y})=U_s(\tilde{y}).
\end{cases}
    $$
    Notice that substituting $\tilde{y}=y/\sqrt{\nu}$, the above is equivalent to
    $$\begin{cases}  \partial_t u_s\l t,y/\sqrt{\nu}\r=\nu\partial_{yy}u_s\l t, y/\sqrt{\nu} \r;\\ 
u_s(t,0)=0;\\ 
u_s(0,t,y/\sqrt{\nu})=U_s\l t,y/\sqrt{\nu}\r.\end{cases}, $$
which is just the Navier-Stokes equations \eqref{eq:nsnoslip} in the special case of a shear flow solution $\mbf{u}^{\nu}(t,x,y)=(u_s(t,y),0)$.

    We can now state the main result of \cite{grenier}:

\begin{theorem}\label{greniers} Given a smooth shear profile $U_s$, linearly unstable for the Euler equation, and an arbitrary integer $N>0$, there exists a family of solutions $\lll u^{\nu}(t,x,y)\rrr$ to the Navier-Stokes equations such that $$\|u_s(0,y/\sqrt{\nu})-u^{\nu}(0,x,y)\|_{L^{\infty}}\lesssim \nu^N,$$ but \begin{equation}\label{eq:inst}\|u_s(T^{\nu},y/\sqrt{\nu})-u^{\nu}(T^{\nu},x,y)\|_{L^{\infty}}\gtrsim \nu^{1/4}\end{equation} after a time $T^{\nu}\sim \sqrt{\nu}\log \nu \searrow 0$ as $\nu\to 0$.
\end{theorem}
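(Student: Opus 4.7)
The plan follows the standard two-step philosophy for this kind of instability result: build a family of approximate solutions which remain exponentially close to the shear at $t=0$ yet develop an order-$\nu^{1/4}$ deviation by time $T^\nu$, and then show that the exact Navier-Stokes solution with matching data stays within $o(\nu^{1/4})$ of it on $[0,T^\nu]$. The growth engine is the Euler linear instability of $U_s$, transplanted to the boundary-layer scale via the variables $\tau = t/\sqrt\nu$, $\tilde y = y/\sqrt\nu$, in which the shear reduces, to leading order in $\sqrt\nu$, to the frozen profile $U_s(\tilde y)$ and the linearised Navier-Stokes operator becomes an $O(1)$-viscous Orr-Sommerfeld problem. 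By hypothesis the inviscid (Rayleigh) linearisation admits a growing mode $e^{i\alpha x + \sigma \tau}\phi(\tilde y)$ with $\Re\sigma>0$, and a perturbative argument yields a nearby Orr-Sommerfeld eigenmode with growth rate $\sigma_\nu\to\sigma$ as $\nu\to 0$.

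First I would construct an asymptotic expansion of the form
\[ \mbf u^{\mathrm{app},\nu}(t,x,y) = \l u_s(t,y/\sqrt\nu),\,0\r + \sum_{j=1}^{M}\delta^{j}\,e^{j\sigma_\nu \tau}\,\mbf u^{(j)}(x,\tilde y) + \text{corrections in }\sqrt\nu, \]
where $\mbf u^{(1)}$ is the Orr-Sommerfeld mode and each $\mbf u^{(j)}$ for $j\geq 2$ is produced by solving an inhomogeneous linearised problem forced by the nonlinear self-interactions of the lower-order terms. I would set the initial amplitude $\delta := \nu^{N+1/4}$, which yields $\|\mbf u^{\mathrm{app},\nu}(0,\cdot)-(u_s(0,\cdot/\sqrt\nu),0)\|_{L^\infty}\lesssim \nu^N$; the escape time at which the leading mode reaches size $\nu^{1/4}$ is then
\[ T^\nu = \frac{\sqrt\nu}{\sigma_\nu}\,\log\frac{\nu^{1/4}}{\delta} \sim \sqrt\nu\,|\log\nu|, \]
matching the statement. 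Up to higher harmonics $e^{ij\alpha x}$ that cannot cancel the growth of the principal mode, the approximate solution already realises the claimed $L^\infty$ lower bound.

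The remaining task is to compare $\mbf u^\nu$ with $\mbf u^{\mathrm{app},\nu}$. The difference $w$ satisfies a Navier-Stokes system linearised around $\mbf u^{\mathrm{app},\nu}$ with residual source of order $\delta^{M+1} e^{(M+1)\sigma_\nu \tau}$, and a Duhamel argument combined with semigroup bounds for the linearisation around $u_s$ should give $\|w(t)\|_{L^\infty}\lesssim e^{\sigma_\nu \tau}\sup_{s\leq t}\|R^\nu(s)\|_{L^\infty}$, so that $\|w(T^\nu)\|_{L^\infty}=o(\nu^{1/4})$ once $M$ is taken large. The quadratic self-interaction $w\cdot\nabla w$ is handled by a bootstrap in $L^\infty$, using that $w$ lives on the boundary-layer scale so $\nabla w = O(w/\sqrt\nu)$, and the bootstrap closes so long as $\|w\|_{L^\infty}\lesssim \nu^{1/4}$. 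The hard part will be the semigroup bound itself: one must prove that the Orr-Sommerfeld operator around $U_s(\tilde y)$ has spectral radius governed exactly by $e^{\sigma_\nu \tau}$, with no additional polynomial-in-$\nu$ loss in the $L^\infty$-to-$L^\infty$ norm of the associated Green function. This resolvent analysis, including the treatment of the non-modal behaviour near the viscous sublayer, is the technical heart of the argument and is exactly what was carried out in Grenier's original paper; adapting it to the Navier setting treated later in the paper should be, in spirit, the new content.
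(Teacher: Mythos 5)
Your two-step architecture (approximate solution driven by the unstable mode, then a stability estimate for the difference) and your choice of amplitude and escape time do match the strategy the paper uses for its generalization of this theorem in Section \ref{main0}. But the way you propose to close the argument contains a genuine gap. First, after the isotropic rescaling the viscosity is $\sqrt{\nu}$, not $O(1)$ (see \eqref{eq:nsnavier2}), so the Orr--Sommerfeld problem you invoke is a \emph{singular} perturbation of the Rayleigh problem; producing a nearby viscous eigenmode with $\sigma_\nu\to\sigma$ is already delicate, and the sharp $L^\infty\to L^\infty$ semigroup/Green-function bound at rate $e^{\sigma_\nu\tau}$ with no polynomial-in-$\nu$ loss, which your Duhamel step requires, is \emph{not} what Grenier's original paper proves. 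That resolvent analysis is the content of the much later works \cite{grenier3}, \cite{greniernguyen} (which is how the instability was upgraded to order one). As written, your final step rests on an estimate that is unavailable at the level of the original theorem.

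The paper (following Grenier) deliberately circumvents this. The leading profile is not an Orr--Sommerfeld eigenmode but a wave packet of \emph{inviscid} Rayleigh modes, $\mbf{u}_0^I=\int\varphi(k)e^{ikx+\lambda_k t}(\psi_k',-ik\psi_k)\,\mathrm{d}k$, giving the growth $e^{\sigma_0 t}(1+t)^{-1/4}$ of \eqref{eq:u0bound}; the no-slip (here Navier) boundary condition is restored by an explicit sublayer corrector $\nu^a\mbf{u}^b(t,x,Y)$, $Y=y/\nu^{1/4}$, and viscous and nonlinear errors are pushed to higher order through the expansion \eqref{eq:ansatz2} in powers of $\nu^{2^{-n}}$. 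The key structural point is that after $M$ terms the residual $\mbf{R}^{\textup{app}}$ grows like $\bigl(\nu^N e^{\sigma_0 t}(1+t)^{-1/4}\bigr)^{P}$ with $P=1+\frac{M+1}{2^nN}>1$ (see \eqref{eq:finalrem}); the difference $\mbf{v}=\mbf{u}^\nu-\mbf{u}^{\textup{app}}$ is then controlled by a plain $L^2$ energy estimate \eqref{eq:energest} in which the linear term only needs the crude bound $\|\nabla\mbf{u}^{\textup{app}}\|_{L^\infty}+\beta\le 2P\sigma_0-1$, achievable by taking $M$ large, and Lemma \ref{gronwall} closes the loop because the forcing rate $2P\sigma_0$ strictly exceeds the Gronwall rate. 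No sharp spectral bound on the linearized operator is ever needed. If you want to keep your Orr--Sommerfeld/Duhamel route you must either prove the uniform-in-$\nu$ resolvent estimate (hard, and beyond the original theorem) or replace it with this ``superlinear residual plus crude Gronwall'' mechanism; also note that the $L^\infty$ lower bound at $t=T^\nu$ is obtained in the paper from an $L^2$ lower bound on a bounded set $\Omega_A$ via \eqref{eq:lowerbound}, and that on $\mathbb{R}\times\mathbb{R}_+$ a single Fourier mode $e^{i\alpha x}$ is not square-integrable, which is why the wave packet and the $(1+t)^{-1/4}$ factor appear.
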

This shows that the expansion \eqref{eq:blexp0} diverges up to order $\nu^{1/4}$ after some small time $T^{\nu}\searrow 0$, regardless of how close the initial conditions are. This tells us that the boundary layer expansion \eqref{eq:blexp0} is unstable, as the Navier-Stokes solutions $u_s$ and $u^{\nu}$ have the same boundary layer expansion at time $t=0$, but they differ by $\nu^{1/4}$ at time $T^{\nu}$.

The above result assumes that the viscous flow satisfies the no-slip boundary condition \eqref{eq:nsnoslip}$_3$. It is therefore natural to ask if the same instability holds when we replace the no-slip condition \eqref{eq:nsnoslip}$_3$ with boundary conditions which produce weaker boundary layers, such as the Navier boundary condition, which was originally proposed by Navier in 1823 \cite{navier}. This boundary condition, like the no-slip condition, forbids penetration of the fluid through the boundary, but allows some slip to occur. In this paper, we will focus on the following viscosity-dependent condition:
\begin{equation}\label{eq:navierbc0}\begin{cases}\partial_y u(t,x,0) = \nu^{-\gamma} u(t,x,0);\\ 
v(t,x,0)=0.
\end{cases}\end{equation}
When $\gamma=0$, so that $\nu^{\gamma}$ - known as the \emph{slip length} - is independent of the viscosity, Iftimie and Sueur \cite{iftimie} proved the validity of the following boundary layer expansion with an \emph{amplitude} of order $\sqrt{\nu}$, which is the factor multiplying $\mbf{u}^b$:
$$\mbf{u}^{\nu}(t,x,y)=\mbf{u}^E(t,x,y)+ \sqrt{\nu}\mbf{u}^b\l t,x,\frac{y}{\sqrt{\nu}}\r +O(\nu).$$
However if $\gamma > 0$, the boundary layers become significant again. Notice that in the limit $\gamma \to +\infty$ we recover the no-slip case. The corresponding appropriate boundary layer expansion, as investigated in \cite{wangwangxin}, is the following for $\gamma \geq 0$:
\begin{equation}\label{eq:expgen}\mbf{u}^{\nu}(t,x,y) \sim \mbf{u}^E(t,x,y)+ \nu^{\max\lll 1/2-\gamma;0\rrr} \mbf{u}^b \l t,x,\frac{y}{\sqrt{\nu}}\r, \end{equation}

which corresponds to the expansion found in \cite{iftimie} when $\gamma=0$. The factor $\displaystyle  \nu^{\max \lll 1/2-\gamma;0\rrr}$ represents the amplitude of the boundary layer. Hence it makes sense to identify $\gamma=1/2$ as the \emph{critical exponent}, where we transition from order one amplitude ($\gamma \geq 1/2$) to a small amplitude which vanishes with the viscosity ($\gamma <1/2$).

In \cite{paddick}, Paddick considers the case $\gamma=1/2$ and proves an instability result akin to \autoref{greniers}. In fact, his result is stronger, as he is able to obtain an order one instability. This is in spite of the fact that the boundary layer is weaker, so one would expect better stability.

The main advantage of the exponent $\gamma=1/2$ is that the evolution of the shear profile $U_s$ is still independent of the viscosity. Namely, we can choose $u_s(t,\tilde{y})$ to solve
$$\begin{cases}\partial_t u_s(t,\tilde{y})= \partial_{\tilde{y}\tilde{y}}u_s(t,\tilde{y});\\
\partial_Y u_s(t,0)=u_s(t,0);\\
u_s(0,\tilde{y})=U_s(\tilde{y});
\end{cases}
$$
which is equivalent, with $\tilde{y}=y/\sqrt{\nu}$, to
$$\begin{cases}\partial_t u_s\l t,y/\sqrt{\nu}\r=\nu \partial_{yy} u_s\l t,y/\sqrt{\nu}\r;\\ 
\partial_y u_s(t,0)=\nu^{-1/2}u_s(t,0); \\ 
u_s\l 0, t,y/\sqrt{\nu}\r=U_s\l t,y/\sqrt{\nu}\r;
\end{cases}
$$
which are precisely the Navier-Stokes equation with the boundary condition \eqref{eq:navierbc0}, $\alpha=\nu^{-1/2}$. If $\gamma \neq 1/2$, then with the above change of variables it is impossible to obtain a unique shear flow $u_s$, independently of the viscosity. In this case, to obtain the correct boundary conditions after the re-scaling, we must define a family of flows $u_s^{\nu}$ satisfying
\begin{equation}\label{eq:shflows}
\begin{cases}\partial_t u_s^{\nu}(t,\tilde{y})=\partial_{\tilde{y}\tilde{y}}u_s^{\nu}(t,\tilde{y});\\
\partial_{\tilde{y}} u_s^{\nu}(t,0)=\nu^{1/2-\gamma} u_s^{\nu}(t,0);\\
u_s^{\nu}(0,\tilde{y})= U_s(\tilde{y}).
\end{cases}
\end{equation}
Because of the $\nu^{1/2-\gamma}$ factor appearing in the boundary condition \eqref{eq:shflows}$_2$, it would be impossible for a single flow, independent from $\nu$, to satisfy \eqref{eq:shflows} for all $\nu$. 

In the main result of this paper, \autoref{main}, we extend Paddick's and Grenier's instability results to all exponents $\gamma \in \mathbb{R}$, while the invalidity of the boundary layer expansion \eqref{eq:expgen} can be generalized to all $\gamma \geq 1/2$. Indeed, the initial condition $U_s(\tilde{y})$ for $u_s^{\nu}$ is a function of $y/\sqrt{\nu}$, so the shear flow $u_s^{\nu}$ can only satisfy the boundary layer expansion \eqref{eq:expgen} at time $t=0$ if the amplitude of the boundary layer is of order one, i.e. if $\gamma \geq 1/2$. In general, we obtain an $L^{\infty}$ instability result for the Navier-Stokes equations which is of order $\nu^{\theta}$, where $\theta$ is a continuous and increasing function of $\gamma$ given by
$$\theta:=\begin{cases} \frac{1}{4} & \gamma \geq \frac{3}{4};\\ 
\gamma - \frac{1}{2} & \frac{1}{2}<\gamma < \frac{3}{4};\\
0 & \gamma \leq \frac{1}{2}.
\end{cases}$$
This interpolates Grenier's original result (limit for $\gamma \to +\infty$) and Paddick's for $\gamma=1/2$. From $\gamma=1/2$ the order of the instability decays until $\gamma=3/4$, where it stabilizes at $\nu^{1/4}$, as in Grenier's case. The reason why this occurs is simply a consequence of the boundary layer expansion \eqref{eq:expgen}, and the use in the proof of an isotropic change of variables mapping $\gamma$ to $2\gamma-3/2$.  

For $\gamma \neq 1/2$, as discussed above, we have to replace the single shear flow $u_s$ in the statement of \autoref{greniers} with a family of viscosity-dependent shear flows $u_s^{\nu}\l t, y/\sqrt{\nu}\r$, defined as the solution of \eqref{eq:shflows}. We refer to Section \ref{main} for the details and the precise statement. 

The main consequence of this choice is that Grenier's method must be complemented with some uniform-in-$\nu$ estimates on $u_s^{\nu}$ (Lemma \ref{usbound} and Lemma \ref{otherbound}). Because we need these uniform bounds to hold in Sobolev spaces of arbitrarily high orders, the shear flows $u_s^{\nu}$ must satisfy the compatibility conditions of all orders at $(t,\tilde{y})=(0,0)$. This is only possible if all the derivatives of $U_s$ vanish at $\tilde{y}=0$, which will have to be added as an assumption. This is a heavy limitation, as it eliminates the possibility of using analytic shear flows. However, this assumption still includes shear flows that are smooth, or in an arbitrary non-analytic Gevrey class. In Section \ref{unstableflows}, we apply a result from \cite{zlin} to show that there exist flows satisfying this assumption which are linearly unstable for the Euler equation. 

We remark that there are also positive results for the validity of the formula \eqref{eq:expgen}. In \cite{iftimie}, the validity is proven when $\gamma=0$; in the more recent paper \cite{tao}, the authors establish it for $\gamma\in (0,1/2]$ and initial data in the Gevrey class $(2\gamma)^{-1}$, which for $\gamma=1/2$ is just the analytic class. However, our result proves the invalidity of the boundary layer expansions only when $\gamma \geq 1/2$, and is therefore not in contradiction with these results.

We also point out that more recently E. Grenier and T. Nguyen proved in \cite{greniernguyen} a stronger result of order one instability in $L^{\infty}$ for the no-slip condition. In light of this, it is likely that the order one instability can be extended for the full range $\gamma \geq 1/2$, using similar techniques. This problem is currently under scrutiny by the authors of this paper.

\textbf{\uline{Organization of the paper.}} In Section \ref{heat}, we gather some preliminary results on the heat equation with a mixed (or 'Robin') boundary condition, which are required in various moments of the proof of our main result. In Section \ref{unstableflows}, we prove that shear profiles satisfying all the assumptions of our main result do exist. Finally, in Section \ref{main0}, we state and prove our main result.

\section{Heat equation with a mixed boundary condition}\label{heat}

Define the \emph{heat kernel} on the half-line
\begin{equation}\label{eq:heatker}K(t,y):=\frac{1}{\sqrt{4\pi t}}e^{-\frac{y^2}{4t}},\qquad t>0,y\geq 0.\end{equation}
Its first $y$-derivative is
\begin{equation}\label{eq:firstkerder}\partial_y K(t,y)=\frac{-2y}{\sqrt{4^3\pi t^3}}e^{-\frac{y^2}{4t}}. \end{equation}

Throughout this section, let $u_0\in C_b^{\infty}(\mathbb{R}_+)$, meaning that $u_0$ is smooth and all its derivatives are bounded in $\mathbb{R}_+$, and let $a\geq 0$.
\begin{prop}\label{heatrobin} Let $u\in C_b^{\infty}([0,T]\times \mathbb{R}_+)$ be the unique smooth and bounded solution of
\begin{equation}\label{eq:heatrobin2}
\begin{cases}\partial_t u = \partial_{yy}u & (t,y)\in \mathbb{R}_+\times \mathbb{R}_+;\\
\partial_y u=a u & y=0;\\
u=u_0 & t=0.
\end{cases}
\end{equation}
Then, if $K$ is the heat kernel defined in \eqref{eq:heatker},  we have
\begin{equation}
    u= K \star \tilde{u}_0, 
\end{equation}
where the convolution is on $\mathbb{R}$ and $\tilde{u}_0$ is the continuous extension of $u_0$ to $\mathbb{R}$ such that  $\partial_y \tilde{u}_0-a\tilde{u}_0$ is an odd function; in particular for all $y\geq 0$,
\begin{equation}\label{eq:extension}    \tilde{u}_0(-y)= e^{-ay}\l u_0(0) + \int_0^y e^{a\tilde{y}}\l u'_0(\tilde{y})-au_0(\tilde{y}) \r\,\mathrm{d}\tilde{y}\r.
\end{equation}
\end{prop}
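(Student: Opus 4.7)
The plan is to use the classical method of images, generalized to Robin boundary conditions. The heat kernel $K(t,\cdot)$ on $\mathbb{R}$ is an even function of its spatial variable, so convolution preserves parity in $y$: odd initial data produce odd solutions, which automatically vanish at $y=0$. The trick is therefore to extend $u_0$ not so as to make $u$ itself odd (which would give a Dirichlet condition) but so as to make the combination $\partial_y \tilde{u}_0 - a\tilde{u}_0$ odd. If that can be done with a continuous extension, then $w(t,y):=\partial_y u(t,y)-au(t,y)$ will be odd in $y$ at every positive time (as the bounded solution of the heat equation on $\mathbb{R}$ starting from odd data), and the Robin condition $w(t,0)=0$ comes for free.

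To derive formula \eqref{eq:extension}, I would set $g(y):=\tilde{u}_0(-y)$ for $y>0$ and translate the oddness requirement into an equation for $g$. For $y>0$ one has $\partial_y \tilde{u}_0(-y)=-g'(y)$, so the oddness condition $(\partial_y\tilde{u}_0-a\tilde{u}_0)(-y)=-(u_0'(y)-au_0(y))$ becomes the first-order linear ODE
\begin{equation*}
g'(y)+a g(y)=u_0'(y)-a u_0(y),
\end{equation*}
equipped with the initial value $g(0)=u_0(0)$ enforced by continuity of $\tilde u_0$. The integrating factor $e^{ay}$ yields $(e^{ay}g)'=e^{ay}(u_0'-a u_0)$, and integrating from $0$ to $y$ gives exactly \eqref{eq:extension}.

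For existence, I would first show that $\tilde{u}_0$ is bounded on $\mathbb{R}$: integration by parts in \eqref{eq:extension} rewrites the formula as $\tilde u_0(-y)=u_0(y)-2a e^{-ay}\int_0^y e^{a\tilde y}u_0(\tilde y)\,\mathrm{d}\tilde y$, which is uniformly bounded by $3\|u_0\|_{L^\infty}$ when $a\geq 0$. Then $u:=K\star\tilde{u}_0$ is smooth and bounded on $(0,T]\times\mathbb{R}$ and solves the heat equation there. The initial condition $u(0,y)=u_0(y)$ for $y>0$ follows from the standard pointwise convergence of the heat semigroup at continuity points of $\tilde u_0$. For the boundary condition, differentiating under the convolution gives $w=K\star(\partial_y\tilde u_0-a\tilde u_0)$; since the bracketed factor is odd (a.e., and locally integrable because $\tilde u_0$ has only a jump in its derivative at $0$) and $K(t,\cdot)$ is even, $w(t,\cdot)$ is odd for every $t>0$, hence $(\partial_y u-au)|_{y=0}=0$.

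Uniqueness in $C_b^\infty([0,T]\times\mathbb{R}_+)$ follows from a standard energy estimate: for the difference $\delta$ of two solutions, multiplying by $\delta$ and integrating by parts produces the boundary term $a\delta(t,0)^2\geq 0$, which has a favourable sign and lets one close a Gr\"onwall argument. The only genuinely delicate point is that $\tilde{u}_0$ generally has a jump in its first derivative across $y=0$, so $u$ is not in $C^\infty$ of the closed half-space unless one assumes the compatibility condition $u_0'(0)=au_0(0)$; this does not affect the validity of the representation, as the parity argument verifies the Robin condition directly, but it is the one subtlety I would have to address carefully.
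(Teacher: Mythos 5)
Your proposal is correct and follows essentially the same route as the paper: a method-of-images argument in which the extension is chosen so that $\partial_y\tilde u_0-a\tilde u_0$ is odd, and the Robin condition then follows because the even kernel $K(t,\cdot)$ integrated against an odd function vanishes at $y=0$. The only difference is cosmetic — you derive \eqref{eq:extension} by solving the first-order ODE that the oddness condition imposes, whereas the paper simply verifies that the stated formula has the required parity — and your added remarks on boundedness of $\tilde u_0$, uniqueness, and the derivative jump at $y=0$ are correct supplements rather than departures.
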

\begin{proof}First we check that $\tilde{u}_0$ defined by \eqref{eq:extension} satisfies the required property. We have
\begin{align*}
    \partial_y (\tilde{u}_0(-y))= -a \tilde{u}_0(-y)+ u'_0(y)-au_0(y), 
\end{align*}
so
$$ u'_0(y)-au_0(y) = -\l \tilde{u}'_0(-y)-a\tilde{u}_0(-y)\r.$$
Thus $\partial_y \tilde{u}_0-a\tilde{u}_0$ is odd.

Equations \eqref{eq:heatrobin2}$_1$ and \eqref{eq:heatrobin2}$_3$ follow from the usual properties of the heat kernel and convolution. It remains to check \eqref{eq:heatrobin2}$_2$. At $y=0$, we have
\begin{align*}
    (\partial_y u -au)(t,0)=\int_{\mathbb{R}}K(t,x)\l  \tilde{u}'_0(-x)-a\tilde{u}_0(-x)\r\,\mathrm{d}x.
\end{align*}
But we know that $\partial_y \tilde{u}_0-a\tilde{u}_0$ is odd, whereas $x\mapsto K(t,x)$ is even, therefore the product is odd and the integral equates zero. Hence \eqref{eq:heatrobin2}$_2$ is satisfied for all $t\geq 0$. 
\end{proof}
\begin{remark}
By integration by parts, \eqref{eq:extension} can be rewritten as
\begin{equation}\label{eq:extension2}
\tilde{u}_0(-y)= u_0(y) -2a\int_0^y e^{-a(y-\tilde{y})}u_0(\tilde{y})\,\mathrm{d}\tilde{y},
\end{equation}
or
\begin{equation}\label{eq:extension3}
    \tilde{u}_0(-y)= -u_0(y) +2e^{-ay}u_0(0)+2\int_0^y e^{-a(y-\tilde{y})}u'_0(\tilde{y})\,\mathrm{d}\tilde{y}.
\end{equation}
\end{remark}

\begin{remark}\label{limits}
If we take the limit for $a\to +\infty$ in \eqref{eq:extension3} we obtain 
$$\tilde{u}_0(y)=-u_0(-y), $$
which is just the odd extension of $u_0$. This choice of extension yields the solution to the Dirichlet problem, which is in agreement with the fact that the condition $\partial_y u= au$ at $y=0$ converges to the Dirichlet condition if $a\to +\infty$. We call this solution $u^{\infty}_0$. 

Taking $a=0$ instead, from \eqref{eq:extension2} we see that $\tilde{u}_0$ is the even extension, which corresponds to the Neumann problem $u'_0(y=0)=0$.
\end{remark}

From now on, \uline{we will assume that $u_0(0)=0$}. Notice that this implies that $u'_0(0)=0$.

In this case we may rewrite \eqref{eq:extension3} as 
$$  \tilde{u}_0(-y)= -u_0(y)+2\int_0^y e^{-a(y-\tilde{y})}u'_0(\tilde{y})\,\mathrm{d}\tilde{y}. $$

We can then obtain the estimate
\begin{align*}
    |\tilde{u}_0(-y)|&\leq |u_0(y)|+2\frac{1-e^{-ay}}{a} \sup_{\tilde{y}\in [0,y]}\left|u'_0(\tilde{y})\right|.
\end{align*}
Similarly from \eqref{eq:extension2} we get
$$|\tilde{u}_0(-y)|\leq |u_0(y)|+ 2(1-e^{-ay})\sup_{\tilde{y}\in [0,y]}|u_0(\tilde{y})|. $$
Putting the two together, we get
\begin{equation}\label{eq:boundshear}
\|\tilde{u}_0\|_{L^{\infty}}\leq \|u_0\|_{L^{\infty}}+ 2\min \lll \frac{1}{a}\|u_0'\|_{L^{\infty}};\|u_0\|_{L^{\infty}}\rrr.
\end{equation}
Note that both when $a\to 0$ and $a\to +\infty$, the contribution of the derivative $u_0'$ in the estimate disappears.

 We want to find an explicit  expression of the derivatives of $\tilde{u}_0$, starting from \eqref{eq:extension3}.
\begin{prop}[Derivatives formula] We have for all $k\in \mathbb{Z}_{\geq 0}$, and for all $y>0$,
\begin{align}\label{eq:derivatives}
    (-1)^k\tilde{u}_0^{(k)}(-y)&= -u_0^{(k)}(y)+2e^{-ay}\sum_{j=0}^{k}(-a)^{k-j} u_0^{(j)}(0)+2\int_0^y e^{-a(y-\tilde{y})}u_0^{(k+1)}(\tilde{y})\,\mathrm{d}\tilde{y}\\  \label{eq:derivatives3}
    &=u_0^{(k)}(y)+2e^{-ay}\sum_{j=0}^{k-1} (-a)^{k-j}u_0^{(j)}(0)-2a\int_0^y e^{-a(y-\tilde{y})}u_0^{(k)}(\tilde{y})\,\mathrm{d}\tilde{y}.
\end{align}
\end{prop}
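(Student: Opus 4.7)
The plan is to prove \eqref{eq:derivatives} by induction on $k$ and then obtain \eqref{eq:derivatives3} from \eqref{eq:derivatives} by a single integration by parts. Setting $F_k(y) := (-1)^k \tilde{u}_0^{(k)}(-y)$, the chain rule gives $F_k'(y) = (-1)^{k+1}\tilde{u}_0^{(k+1)}(-y) = F_{k+1}(y)$, so the inductive step reduces to checking that differentiating the claimed right-hand side at order $k$ produces the right-hand side at order $k+1$.

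The base case $k = 0$ is exactly \eqref{eq:extension3}. For the inductive step, I would differentiate the right-hand side of \eqref{eq:derivatives} term by term. The polynomial factor $2e^{-ay}\sum_{j=0}^{k}(-a)^{k-j}u_0^{(j)}(0)$ picks up an extra factor of $-a$. For the integral term, Leibniz's rule gives
$$\frac{d}{dy}\int_0^y e^{-a(y-\tilde{y})}u_0^{(k+1)}(\tilde{y})\,\mathrm{d}\tilde{y} = u_0^{(k+1)}(y) - a\int_0^y e^{-a(y-\tilde{y})}u_0^{(k+1)}(\tilde{y})\,\mathrm{d}\tilde{y}.$$
The target expression at order $k+1$ contains instead an integral of $u_0^{(k+2)}$, so I would integrate by parts once, with $u = e^{-a(y-\tilde{y})}$ and $\mathrm{d}v = u_0^{(k+2)}(\tilde{y})\,\mathrm{d}\tilde{y}$, yielding the key identity
$$\int_0^y e^{-a(y-\tilde{y})}u_0^{(k+2)}(\tilde{y})\,\mathrm{d}\tilde{y} = u_0^{(k+1)}(y) - e^{-ay}u_0^{(k+1)}(0) - a\int_0^y e^{-a(y-\tilde{y})}u_0^{(k+1)}(\tilde{y})\,\mathrm{d}\tilde{y}.$$
Substituting this into the differentiated expression produces exactly the integral of $u_0^{(k+2)}$ together with a boundary contribution $2e^{-ay}u_0^{(k+1)}(0)$, which is precisely the $j=k+1$ term needed to extend the summation from $\sum_{j=0}^{k}(-a)^{k+1-j}u_0^{(j)}(0)$ to $\sum_{j=0}^{k+1}(-a)^{k+1-j}u_0^{(j)}(0)$, since $(-a)^0 = 1$. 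Collecting the three pieces yields \eqref{eq:derivatives} at order $k+1$.

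Formula \eqref{eq:derivatives3} I would then derive from \eqref{eq:derivatives} without a second induction, applying the same integration-by-parts identity in the opposite direction to the integral of $u_0^{(k+1)}$ appearing in \eqref{eq:derivatives}: that is,
$$2\int_0^y e^{-a(y-\tilde{y})}u_0^{(k+1)}(\tilde{y})\,\mathrm{d}\tilde{y} = 2u_0^{(k)}(y) - 2e^{-ay}u_0^{(k)}(0) - 2a\int_0^y e^{-a(y-\tilde{y})}u_0^{(k)}(\tilde{y})\,\mathrm{d}\tilde{y}.$$
The boundary term $-2e^{-ay}u_0^{(k)}(0)$ cancels the $j=k$ contribution in the sum of \eqref{eq:derivatives} (which is $2e^{-ay}u_0^{(k)}(0)$), the leading $2u_0^{(k)}(y)$ combines with $-u_0^{(k)}(y)$ to give $+u_0^{(k)}(y)$, and the remaining sum truncates to $j = 0, \ldots, k-1$, giving exactly \eqref{eq:derivatives3}.

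There is no genuine obstacle here; the argument is entirely routine once the key IBP identity above is isolated. The only point requiring care is sign and index bookkeeping — in particular verifying that the boundary term produced at each inductive step is precisely what is needed to extend the summation index by one. A minor preliminary observation worth noting is that the formula is stated for $y > 0$, so no issue arises from evaluating the integrand at the endpoint $\tilde{y} = y$, and the formulas remain valid at $a = 0$ (where they reduce to the derivatives of the odd/even extensions mentioned in Remark \ref{limits}).
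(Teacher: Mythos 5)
Your argument is correct and is essentially the paper's own proof: induction on $k$ with base case \eqref{eq:extension3}, where the inductive step consists of differentiating the right-hand side and integrating by parts so that the boundary term $2e^{-ay}u_0^{(k+1)}(0)$ extends the sum by one index, and \eqref{eq:derivatives3} then follows from \eqref{eq:derivatives} by a single further integration by parts. The sign and index bookkeeping in your key identity checks out, so nothing further is needed.
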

\begin{proof}
The case $k=0$ is \eqref{eq:extension3}. Suppose \eqref{eq:derivatives} holds for some $k\geq 0$. Then after differentiating both sides, and integrating by parts, we get
\begin{align*}
    (-1)^{k+1}\tilde{u}_0^{(k+1)}(-y)=-{u}_0^{(k+1)}(y)+2e^{-ay}\sum_{j=0}^{k}(-a)^{k+1-j}u_0^{(j)}(0)+ (\star),
\end{align*}
where
\begin{align*}
    (\star)= 2 \l u_0^{(k+1)}(y)-u_0^{(k+1)}(y)+e^{-ay}u_0^{(k+1)}(0)+\int_0^ye^{-a(y-\tilde{y})}u_0^{(k+2)}(\tilde{y})\,\mathrm{d}\tilde{y}\r.
\end{align*}
From this, \eqref{eq:derivatives} follows, and \eqref{eq:derivatives3} by integration by parts.
\end{proof}
\subsection{Well-posedness of the inhomogeneous boundary problem}\label{firstpart}
In this Section, we prove a well-posedness result for the fully inhomogeneous heat equation with a Dirichlet, Neumann or mixed boundary condition. As long as the data satisfy an exponential-type bound in time, then the solution will satisfy the same bound, without any additional growth in time.  
\begin{lemma}\label{asympt} Let $\alpha>0$ and $\beta \geq 0$. Then there is a constant $C>0$ such that 
$$\int_0^t \frac{e^{\alpha s}}{(1+s)^{\beta}}\,\mathrm{d}s \leq C\frac{e^{\alpha t}}{(1+t)^{\beta}},\qquad \forall t\geq 0.$$
\end{lemma}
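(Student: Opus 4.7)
The plan is to split the interval of integration at $t/2$ and estimate the two resulting pieces separately against the target $e^{\alpha t}/(1+t)^\beta$. The motivation is that the factor $e^{\alpha s}$ concentrates mass near the upper endpoint $s=t$, so the dominant contribution comes from a neighbourhood of $t$ where the denominator $(1+s)^\beta$ is comparable to $(1+t)^\beta$, while on the lower half of the interval one can afford extremely crude bounds because $e^{\alpha s}\leq e^{\alpha t/2}$ provides an extra factor of $e^{-\alpha t/2}$ to absorb any polynomial losses.

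First I would handle the upper piece $[t/2,t]$. For $s\in[t/2,t]$ one has $(1+s)^{-\beta}\leq 2^\beta (1+t)^{-\beta}$, so
\begin{equation*}
\int_{t/2}^t \frac{e^{\alpha s}}{(1+s)^\beta}\,\mathrm{d}s \leq \frac{2^\beta}{(1+t)^\beta}\int_{t/2}^t e^{\alpha s}\,\mathrm{d}s \leq \frac{2^\beta}{\alpha}\cdot \frac{e^{\alpha t}}{(1+t)^\beta},
\end{equation*}
which already matches the desired form with constant $2^\beta/\alpha$. For the lower piece $[0,t/2]$ I use $e^{\alpha s}\leq e^{\alpha t/2}$ together with $(1+s)^{-\beta}\leq 1$ to obtain
\begin{equation*}
\int_0^{t/2}\frac{e^{\alpha s}}{(1+s)^\beta}\,\mathrm{d}s \leq \tfrac{t}{2}\,e^{\alpha t/2}.
\end{equation*}

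To convert this last bound into the desired form, I rely on the elementary fact that $t(1+t)^\beta e^{-\alpha t/2}$ is bounded on $[0,+\infty)$: the exponential eventually beats any polynomial, and the function is continuous (and equals $0$) at $t=0$. Hence $\tfrac{t}{2}e^{\alpha t/2}\leq C'\,e^{\alpha t}/(1+t)^\beta$ for some $C'=C'(\alpha,\beta)$, and summing the two estimates yields the claim with $C=2^\beta/\alpha+C'$. I do not expect any real obstacle: this is a textbook Laplace-type asymptotic, and the only care needed is ensuring uniformity down to $t=0$, which the continuity just mentioned supplies. An equivalent, slightly slicker alternative would be a single integration by parts, giving $\int_0^t e^{\alpha s}(1+s)^{-\beta}\,\mathrm{d}s=\alpha^{-1}e^{\alpha t}(1+t)^{-\beta}-\alpha^{-1}+\beta\alpha^{-1}\int_0^t e^{\alpha s}(1+s)^{-\beta-1}\,\mathrm{d}s$ and then iterating or absorbing the tail, but the split above is cleaner.
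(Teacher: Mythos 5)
Your proof is correct. It takes a genuinely different route from the paper's: the paper observes that $t\mapsto \int_0^t\varphi(s)\,\mathrm{d}s\,/\,\varphi(t)$ is continuous on $[0,\infty)$ and computes its limit at infinity via L'Hopital's rule (obtaining $1/\alpha$), so boundedness follows from continuity plus a finite limit; you instead split the integral at $t/2$ and estimate each piece explicitly. Each step of your argument checks out: on $[t/2,t]$ the inequality $1+t\leq 2(1+t/2)$ gives $(1+s)^{-\beta}\leq 2^{\beta}(1+t)^{-\beta}$ (using $\beta\geq 0$), and on $[0,t/2]$ the crude bound $\tfrac{t}{2}e^{\alpha t/2}$ is absorbed because $t(1+t)^{\beta}e^{-\alpha t/2}$ is continuous on $[0,\infty)$ and tends to $0$ at infinity. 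What your approach buys is an explicit, quantitative constant $C=2^{\beta}/\alpha+C'$ and no appeal to L'Hopital; what the paper's approach buys is brevity and the sharp asymptotic constant $1/\alpha$ for large $t$, at the cost of a non-constructive continuity argument on the compact part of the half-line. Both are complete proofs of the stated lemma.
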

\begin{proof} Let $\varphi(t):=\frac{e^{\alpha t}}{(1+t)^{\beta}}$. Since $t\mapsto \frac{\int_0^t\varphi(s)\,\mathrm{d}s}{\varphi(t)} $ is continuous over $[0,\infty)$, we just need to prove that
$$\lim_{t\to  +\infty}\frac{\int_0^t\varphi(s)\,\mathrm{d}s}{\varphi(t)}<\infty. $$
In fact, by L'Hopital's rule, we see that
$$\lim_{t\to  +\infty}\frac{\int_0^t\varphi(s)\,\mathrm{d}s}{\varphi(t)}=\lim_{t\to +\infty}\frac{\varphi(t)}{\varphi(t)\l \alpha-\beta(1+t)^{-1}\r}=\frac{1}{\alpha}. $$
\end{proof}
The following result is taken from \cite{paddick}, Lemma 3.4. We further include a proof.
\begin{lemma}\label{gronwall}Let $\varphi:\mathbb{R}_+\to \mathbb{R}$ satisfy
\begin{equation}\label{eq:gronw}\varphi'(t)\leq \lambda \varphi(t)+ C\frac{e^{\alpha t}}{(1+t)^{\beta}},\qquad \forall t\geq 0, \end{equation}
where $0\leq \lambda <\alpha $, $\beta>0$ and $C>0$ can depend on $\alpha,\beta$ but not on $t$.  Then 
$$\varphi(t)\leq C'\frac{e^{\alpha t}}{(1+t)^{\beta}},\qquad \forall t\geq 0.$$
\end{lemma}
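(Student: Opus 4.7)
The natural approach is integrating-factor plus Grönwall, followed by an appeal to the preceding Lemma \ref{asympt}. First I would multiply \eqref{eq:gronw} by $e^{-\lambda t}$ to rewrite it as
\begin{equation*}
\frac{d}{dt}\!\left(e^{-\lambda t}\varphi(t)\right) \leq C\, \frac{e^{(\alpha-\lambda)t}}{(1+t)^{\beta}},
\end{equation*}
and then integrate on $[0,t]$ to obtain
\begin{equation*}
\varphi(t) \leq e^{\lambda t}\varphi(0) + C\, e^{\lambda t}\int_0^t \frac{e^{(\alpha-\lambda)s}}{(1+s)^{\beta}}\,\mathrm{d}s.
\end{equation*}

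Next, since $\alpha-\lambda>0$ and $\beta>0$, Lemma \ref{asympt} (applied with exponent $\alpha-\lambda$ in place of $\alpha$) bounds the integral by $C_1\,e^{(\alpha-\lambda)t}/(1+t)^{\beta}$, so the second term is already of the desired form $\lesssim e^{\alpha t}/(1+t)^{\beta}$. To absorb the first term I would use that, because $\lambda<\alpha$, the function $t\mapsto (1+t)^{\beta} e^{(\lambda-\alpha)t}$ is continuous on $[0,\infty)$ and tends to $0$ as $t\to\infty$, hence is bounded by some constant $C_2$; equivalently $e^{\lambda t}\leq C_2\, e^{\alpha t}/(1+t)^{\beta}$ uniformly in $t\geq 0$. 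Combining the two bounds yields $\varphi(t)\leq C'\, e^{\alpha t}/(1+t)^{\beta}$ with $C'=C_2\varphi(0)+CC_1$, as required.

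There is no real obstacle here: the only point that needs care is making sure both contributions (the homogeneous part $e^{\lambda t}\varphi(0)$ and the particular part) can be written with the same decay factor $(1+t)^{-\beta}$, and both are handled by the strict inequality $\lambda<\alpha$ together with Lemma \ref{asympt}. The assumption $\beta>0$ is used precisely to make $(1+t)^{-\beta}$ decreasing so that $(1+t)^{\beta}e^{(\lambda-\alpha)t}$ is indeed bounded on $[0,\infty)$, while $\beta=0$ would still work by the same argument.
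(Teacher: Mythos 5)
Your proof is correct and follows essentially the same route as the paper: both reduce to the bound $\varphi(t)\leq e^{\lambda t}\left(\varphi(0)+C\int_0^t e^{(\alpha-\lambda)s}(1+s)^{-\beta}\,\mathrm{d}s\right)$ (the paper via the integral form of Gr\"onwall's lemma, you via the integrating factor $e^{-\lambda t}$, which amounts to the same thing) and then invoke Lemma \ref{asympt} with exponent $\alpha-\lambda>0$. Your explicit absorption of the homogeneous term $e^{\lambda t}\varphi(0)$ into $C'e^{\alpha t}(1+t)^{-\beta}$ is a welcome detail the paper leaves implicit; note only that the boundedness of $(1+t)^{\beta}e^{(\lambda-\alpha)t}$ comes from the exponential dominating the polynomial, not from $\beta>0$ per se.
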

\begin{proof}Integrating \eqref{eq:gronw} from $0$ to $t$, we get
$$\varphi(t) \leq \varphi(0)+\int_0^t \lambda\varphi(s)\,\mathrm{d}s + C\int_0^t \frac{e^{\alpha s}}{(1+s)^{\beta}}\,\mathrm{d}s. $$
Hence, by an application of the Gronwall's lemma (see \cite{piccinini}, page 19) and Lemma \ref{asympt} - since $\alpha-\lambda >0$ - we have
\begin{align*}\varphi(t) &\leq Ce^{\lambda t}\l \varphi(0)+\int_0^t e^{(\alpha-\lambda)s}(1+s)^{-\beta}\,\mathrm{d}s\r\\ 
&\leq Ce^{\lambda t}\cdot C_{\alpha-\lambda,\beta}\frac{e^{(\alpha-\lambda) t}}{(1+t)^{\beta}}\\
& \leq C'\frac{e^{\alpha t}}{(1+t)^{\beta}}.
 \end{align*}
\end{proof}

Hereafter, we will denote with $C_k$ an arbitrary positive constant depending on $k$, which may vary from line to line. These constants are always independent from $t$ and $y$.
\begin{prop}[$H^k$ estimates]\label{heatestimate} Let $a,b\geq 0$, $(a,b)\neq (0,0)$.
Suppose that $u_0,f\in H^k(\mathbb{R}_+)$ and $r\in H^k(\mathbb{R}_+\times \mathbb{R}_+)$ for all $k\in \mathbb{Z}_{\geq 0}$, and there exist $\alpha>0$, $\beta\geq 0$ such that
$$\|\partial_y^kr(t)\|_{L^2(\mathbb{R}_+)}+|f^{(k)}(t)|\leq C_k \frac{e^{\alpha t}}{(1+t)^{\beta}}\qquad \forall t\geq 0,k\in \mathbb{Z}_{\geq 0}.$$
Let $u(t,y)$ be the classical solution of 
\begin{equation}\label{eq:heatdef}
    \begin{cases}
    \partial_t u(t,y)=\partial_{yy}u(t,y)+r(t,y) & t\geq 0,y\geq 0\\ 
    a\partial_y u(t,0)-bu(t,0)=f(t) & t\geq 0\\ 
    u(0,y)=u_0(y) & y\geq 0.
    \end{cases}
\end{equation}
Then, for all $k\in \mathbb{Z}_{\geq 0}$, we have
\begin{equation}\label{eq:esstimate}\|u(t)\|_{H^k(\mathbb{R}_+)}\leq CC_k\frac{e^{\alpha t}}{{(1+t)^{\beta }}}\qquad \forall t\geq 0,\end{equation}
where $C$ is independent from $r$, $f$ or $u_0$.
\end{prop}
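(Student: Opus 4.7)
The plan is to first reduce to a problem with homogeneous boundary data and then exploit the dissipativity of the Laplacian with homogeneous Robin conditions to obtain a differential inequality for $\|w\|_{L^2}$ with no linear $\|w\|$ term on the right-hand side. The desired exponential envelope then follows directly from Lemma \ref{asympt}, without having to pay any growth factor through a Gronwall-type argument.

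To lift the boundary condition, since $(a,b)\neq (0,0)$ I take $\chi(y) := -(a+b)^{-1}e^{-y}$, which satisfies $a\chi'(0) - b\chi(0) = 1$. Setting $\psi(t,y) := f(t)\chi(y)$ and $w := u - \psi$, the new unknown $w$ solves the same heat equation with homogeneous boundary condition $a\partial_y w(t,0) - bw(t,0) = 0$, modified source $\tilde r := r + f(t)\chi''(y) - f'(t)\chi(y)$, and initial datum $w(0,y) = u_0(y) - f(0)\chi(y)$. By the hypotheses on $r,f$ and the explicit form of $\chi$, the source $\tilde r$ satisfies $\|\partial_y^k \tilde r(t)\|_{L^2} \leq C_k\,e^{\alpha t}/(1+t)^\beta$ for every $k\geq 0$.

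Multiplying the $w$-equation by $w$ and integrating by parts over $(0,\infty)$ yields
\begin{equation*}
\tfrac{1}{2}\tfrac{\mathrm{d}}{\mathrm{d}t}\|w\|_{L^2}^2 = -w(t,0)\,\partial_y w(t,0) - \|\partial_y w\|_{L^2}^2 + \int_0^{\infty}\! w\,\tilde r\,\mathrm{d}y.
\end{equation*}
The homogeneous Robin condition makes the boundary contribution equal to $-(b/a)\,w(t,0)^2$ if $a\neq 0$ and to $0$ if $a=0$, so it is non-positive in either case. Discarding it together with the dissipative term $-\|\partial_y w\|_{L^2}^2$ and applying Cauchy--Schwarz gives $\tfrac{\mathrm{d}}{\mathrm{d}t}\|w\|_{L^2} \leq \|\tilde r\|_{L^2}$. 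Integrating in time and invoking Lemma \ref{asympt} then produces $\|w(t)\|_{L^2} \lesssim e^{\alpha t}/(1+t)^\beta$ (the constant contribution of the initial datum is absorbed since $e^{\alpha t}/(1+t)^\beta$ is bounded below by a positive constant on $[0,\infty)$), which is \eqref{eq:esstimate} for $k=0$ after undoing the lift.

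For $k\geq 1$ I would induct via time differentiation. The homogeneous Robin condition is preserved by $\partial_t$, so $w_m := \partial_t^m w$ solves the same boundary-value problem with source $\partial_t^m \tilde r$, which still obeys the same exponential envelope, and with initial datum expressible through $u_0, r, f$ by repeated use of the equation. Applying the previous $L^2$ estimate to $w_m$ yields $\|\partial_t^m w(t)\|_{L^2} \leq C_m\, e^{\alpha t}/(1+t)^\beta$, and inverting the heat equation as $\partial_{yy}w = \partial_t w - \tilde r$ converts time derivatives into spatial ones; odd-order derivatives follow by the interpolation-type inequality $\|\partial_y w\|_{L^2}^2 \leq \|w\|_{L^2}\|\partial_{yy}w\|_{L^2}$ (obtained by integration by parts, using again the favorable sign of the boundary term). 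The main obstacle I expect is purely a bookkeeping one: checking that $\partial_t^m w|_{t=0}$ lies in the correct Sobolev space requires the compatibility conditions between $u_0, r, f$ at $(t,y)=(0,0)$ up to order $m$, which are tacit in the assumed existence of a smooth classical solution to \eqref{eq:heatdef}.
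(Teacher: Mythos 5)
Your $k=0$ argument is correct, and it takes a genuinely different route from the paper. You lift the inhomogeneous boundary datum with $\psi=f(t)\chi(y)$, $\chi(y)=-(a+b)^{-1}e^{-y}$, so that the boundary term $-w(t,0)\partial_yw(t,0)=-(b/a)|w(t,0)|^2$ (or $0$ when $a=0$) has a favorable sign and can simply be discarded; the price is the harmless extra source $f\chi''-f'\chi$, which is controlled by the hypothesis on $f^{(k)}$. The paper instead keeps the inhomogeneous condition, substitutes $\partial_yu(t,0)=a^{-1}(f+bu(t,0))$, controls $|u(t,0)|^2$ by the trace/Sobolev inequality, absorbs the resulting $\|\partial_yu\|_{L^2}^2$ into the dissipation, and then needs the Gronwall-type Lemma \ref{gronwall} (with a separate reduction of the Dirichlet case $a=0$ to Neumann by integrating in $y$). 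Your version needs only the elementary Lemma \ref{asympt}, treats $a=0$ uniformly, and produces a differential inequality with no linear term in $\|w\|_{L^2}$; it is cleaner and, in my view, preferable for $k=0$.

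There is, however, a gap in your induction for $k\ge 1$. You differentiate the problem in \emph{time}, so the source of the equation for $w_m=\partial_t^mw$ is $\partial_t^m\tilde r$, and you assert that it ``still obeys the same exponential envelope.'' But the hypotheses of Proposition \ref{heatestimate} control only the \emph{spatial} derivatives $\|\partial_y^kr(t)\|_{L^2}$ and the time derivatives of the boundary datum $f$; nothing is assumed about $\partial_t^mr$. A source such as $r(t,y)=\sin(e^{e^t})g(y)$ satisfies every stated hypothesis while $\|\partial_tr(t)\|_{L^2}$ grows doubly exponentially, so your estimate for $w_1$ fails as written. The paper avoids placing the full $L^2_y$ norm of a time derivative of $r$ into the estimate by instead applying the $k=0$ result to $\partial_y^{2k}u$, whose source is $\partial_y^{2k}r$ (controlled by hypothesis) and whose boundary datum is computed from $f^{(k)}$ and boundary traces via the compatibility conditions, with odd orders recovered by interpolation. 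To repair your argument you would either need to add envelope bounds on $\partial_t^mr$ to the hypotheses (which do hold in the paper's applications) or switch to spatial differentiation for the inductive step. Your closing remark about compatibility conditions at $(t,y)=(0,0)$ is a fair observation, and the paper is equally terse on that point.
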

\begin{proof}We only consider the case $k=0$, as the others follow by applying the result to $\partial_y^{2k}u$, using the compatibility conditions to derive the appropriate boundary conditions and interpolating for odd derivatives. Differentiating the energy, we get
\begin{align*}\frac{1}{2}\dv{}{t}\|u(t)\|_{L^2}^2&=-\|\partial_y u(t)\|_{L^2}^2-u(t,0)\partial_y u(t,0)+ \lpl r(t),u(t)\rer_{L^2}.
\end{align*}
Now there are two different cases.
\begin{itemize}
    \item $a\neq 0,b\geq 0$. We can replace $\partial_y u(t,0)=\frac{1}{a}\l f(t,0)+bu(t,0) \r$, so that
    \begin{align*} \frac{1}{2}\dv{}{t}\|u(t)\|_{L^2}^2 &\leq-\|\partial_y u(t)\|_{L^2}^2-\frac{b}{a}|u(t,0)|^2-\frac{1}{a}u(t,0)f(t,0)+\lpl r(t),u(t)\rer_{L^2}\\ 
    &\leq -\|\partial_y u(t)\|_{L^2}^2 +\frac{\eps}{a}|u(t,0)|^2+\frac{1}{4\eps a}|f(t,0)|^2+\lpl r(t),u(t)\rer_{L^2}\\
    &\leq -\|\partial_y u(t)\|_{L^2}^2+C\frac{\eps}{a}\l \|u(t)\|_{L^2}^2+\|\partial_y u(t)\|_{L^2}^2\r+ \frac{1}{4\eps a}|f(t,0)|^2+\lpl r(t),u(t)\rer_{L^2}\\ 
    &\leq \eps \l \frac{C}{a}+1\r\|u(t)\|_{L^2}^2+\frac{1}{4\eps a}|f(t,0)|^2+\frac{1}{4\eps}\|r(t)\|_{L^2}^2\\ 
    &\leq \eps\l \frac{C}{a}+1\r\|u(t)\|_{L^2}^2+ C_{\eps}\frac{e^{\alpha t}}{(1+t)^{\beta}}.\end{align*}
    where we have chosen $\eps>0$ small enough so that $\eps\dfrac{C}{a}<1$ where $C$ is the constant of the embedding $H^1(\mathbb{R}_+)\hookrightarrow L^{\infty}(\mathbb{R}_+)$. Further requiring $\eps\l \dfrac{C}{a}+1\r<\alpha$,  we can apply Lemma \ref{gronwall} and conclude.
    \item $a=0$:  If $u(t,y)$ solves the Dirichlet problem, then 
    $$v(t,y)=\int_0^y u(t,x)\,\mathrm{d}x+\int_0^t\partial_yu(s,0)\,\mathrm{d}s, $$
    solves the Neumann problem with boundary function $f(t)$, remainder $\int_0^y r(t,x)\,\mathrm{d}x$ and initial condition $\int_0^y u_0(x)\,\mathrm{d}x$. Therefore, \eqref{eq:esstimate} holds for $v$, and hence for $u=\partial_y v$.
\end{itemize}
\end{proof}

We will now look for pointwise estimates of the solutions, describing the decay at infinity. Recall the expression of the heat kernel \eqref{eq:heatker}. 
We can now exhibit the explicit solution of the inhomogeneous Dirichlet problem for the heat equation. 
\begin{lemma}\label{heatdir}
   The classical solution $u(t,y)$ of the problem
\begin{equation}\label{eq:heatdir}  \begin{cases}
    \partial_t u(t,y)=\partial_{yy}u(t,y)+r(t,y) & t\geq 0,y\geq 0\\ 
    u(t,0)=f(t) & t\geq 0\\ 
    u(0,y)=u_0(y) & y\geq 0.
    \end{cases}\end{equation}
is given by $u=u_1+u_2+u_3$, where
\begin{align*}
    u_1(t,y)&=-2\partial_y K(t,y)\star f(t)=-2\int_0^t \partial_yK(t-s,y)f(s)\,\mathrm{d}s,\\ 
    u_2(t,y)&=K(t,y)\star \tilde{u}_0(y),\\ 
    u_3(t,y)&=K(t,y)\star_{t,y}\tilde{r}(t,y)=\int_0^t\int_{\mathbb{R}} K(t-s,y-x)\tilde{r}(s,x)\,\mathrm{d}x\,\mathrm{d}s,
\end{align*}
and $\tilde{u}_0$ and $\tilde{r}$ are the odd extensions in the $y$ variable of $u_0$ and $r$ respectively.
\begin{comment}
 Suppose that $u_0,f$ and $r$ satisfy the compatibility conditions
\begin{equation}\label{eq:comp}
f^{(k)}(0)=u_0^{(2k)}(0)+\sum_{h+j=k-1}\partial_t^{h}\partial_y^{2j}r(0,0),\qquad \forall k\in \mathbb{Z}_{\geq 0}.
\end{equation}
Then for all $k\in \mathbb{Z}_{\geq 0}$,
\begin{equation}\label{eq:expder}
    \partial_y^{2k}u(t,y)=-2\partial_y K(t,y)\star f^{(k)}(t)+K(t,y)\star \tilde{u}_0^{(2k)}(y)+K(t,y)\star_{t,y}\partial_y^{2k}\tilde{r}(t,y),
\end{equation}
where the derivatives are intended in a pointwise sense, i.e. ignoring any Dirac deltas.
\end{comment}
\end{lemma}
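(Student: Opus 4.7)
The plan is to exploit the linearity of \eqref{eq:heatdir} and verify that each of $u_1, u_2, u_3$ handles one of three decoupled subproblems: $u_2$ carries the initial data $u_0$ (with zero source and zero boundary), $u_3$ handles the source $r$ (with zero initial and zero boundary), and $u_1$ carries the boundary data $f$ (with zero initial and zero source). Uniqueness of the classical solution follows from a standard energy argument applied to the difference of two solutions, so it is enough to show that the proposed sum satisfies \eqref{eq:heatdir}.

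The treatments of $u_2$ and $u_3$ are direct applications of the reflection principle. Since $K(t,\cdot)$ is even in $y$ while $\tilde{u}_0$ and $\tilde{r}(t,\cdot)$ are by construction odd extensions in $y$, the convolutions $u_2(t,\cdot)$ and $u_3(t,\cdot)$ are odd in $y$ and hence vanish at $y=0$, yielding the homogeneous Dirichlet boundary trace. That they solve the heat equation with the correct source and initial data follows from the standard properties of the heat kernel on the whole line and Duhamel's principle, restricted to $\{y>0\}$ via $\tilde{r}|_{y>0}=r$ and $\tilde{u}_0|_{y>0}=u_0$.

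The main obstacle is to check that $u_1$ attains the boundary value $f$. That $u_1$ solves the homogeneous heat equation on $\{y>0\}$ follows by differentiating under the integral and using $(\partial_t-\partial_{yy})\partial_y K(t,y)=0$ there, while $u_1(0,y)=0$ for $y>0$ is immediate since the time integral collapses. For the trace, I would perform the change of variables $\tau := y/(2\sqrt{t-s})$, which rewrites
\[
u_1(t,y) \;=\; \frac{2}{\sqrt{\pi}} \int_{y/(2\sqrt{t})}^{+\infty} e^{-\tau^2}\, f\!\left(t - \frac{y^2}{4\tau^2}\right) \mathrm{d}\tau.
\]
Dominated convergence together with the Gaussian identity $\int_0^{+\infty} e^{-\tau^2}\,\mathrm{d}\tau = \sqrt{\pi}/2$ then yields $\lim_{y\to 0^+} u_1(t,y) = f(t)$, as required. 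Adding the three contributions gives a classical solution to \eqref{eq:heatdir}, which by uniqueness is $u$.
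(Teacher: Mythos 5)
Your proposal is correct and is essentially the paper's own proof: the paper simply asserts that the formula "can be checked by plugging $u_1+u_2+u_3$ into \eqref{eq:heatdir}", and your argument carries out exactly that verification (linear superposition, reflection principle for $u_2,u_3$, and the change of variables $\tau=y/(2\sqrt{t-s})$ with the Gaussian integral to recover the boundary trace of $u_1$). No further comparison is needed.
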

\begin{proof}It can be checked by plugging $u_1+u_2+u_3$ into \eqref{eq:heatdir}. 
\end{proof}
\begin{comment}
\begin{lemma}Take $f\in C^{\infty}([0,T])$. Let $a,b\geq 0$ with $(a,b)\neq (0,0)$.  Let $g$ be the solution of the problem
\begin{equation}\label{eq:eqg}\begin{cases}
 a^2 g'(t)=b^2g(t)+ f(t) & t\in [0,T],\\
 g(0)=0.
\end{cases} 
\end{equation}
Then the (bounded) solution of the problem 
\begin{equation}\label{eq:heatdir}\begin{cases}
    \partial_t u(t,y) = \partial_{yy}u(t,y) & y,t>0\\ 
    u(0,y)=0 & y>0 \\ 
    a\partial_y u(t,0)-bu(t,0)=f(t) & t>0,\end{cases}
\end{equation}
is given by
\begin{align}
    u(t,y)&=-2\l b\partial_y K(t,y)+a\partial_{yy}K(t,y) \r \star g(t)\nonumber \\ \label{eq:expsol}
    &= -2\l b\partial_y K(t,y)\star g(t)+aK(t,y)\star g'(t)\r.
\end{align}
\end{lemma}

\begin{remark}The solution $g$ of the problem \eqref{eq:eqg} is given expliticly by
\begin{equation}\label{eq:expg}
g(t)=\begin{cases}\frac{1}{a^2}\int_0^t e^{\frac{b^2}{a^2}s}f(t-s)\,\mathrm{d}s & a \neq 0 \\ 
\frac{1}{b^2}f(t) & a=0.
\end{cases}
\end{equation}
\end{remark}

\end{comment}
\begin{prop}[Pointwise estimates]\label{pointwise}Suppose that $u_0\in H^k(\mathbb{R}_+)$ for all $k\in \mathbb{Z}_{\geq 0}$, and there exist $\alpha>0$, $\beta\geq 0$, and a constant $\lambda>0$ such that
$$\begin{cases}|f^{(k)}(t)|\leq C_k \frac{e^{\alpha t}}{(1+t)^{\beta}},\\ 
|\partial_y^k r(t,y)|\leq C_k\frac{e^{\alpha t}}{(1+t)^{\beta}}e^{-\lambda y},\\
|\partial_y^k u_0(y)|\leq  C_k e^{-\lambda y},
\end{cases} \qquad \forall t\geq 0,k\in \mathbb{Z}_{\geq 0}.$$
Let $u(t,y)$ be unique smooth and bounded solution of \eqref{eq:heatdef}.
Then there exists a constant $\mu>0$ independent from time such that
$$|\partial_y^k u(t,y)|\leq C C_k\frac{e^{\alpha t}}{(1+t)^{\beta}}e^{-\mu y} \qquad \forall t\geq 0,y\geq 0, k\in \mathbb{Z}_{\geq 0}, $$
where $C$ is independent from $f,r$ or $u_0$.
\end{prop}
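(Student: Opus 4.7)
The plan is to decompose $u$ by linearity as $u=u^{(I)}+u^{(R)}+u^{(B)}$, where $u^{(I)},u^{(R)},u^{(B)}$ solve \eqref{eq:heatdef} with, respectively, only $u_0$, $r$, $f$ nontrivial. One first reduces to the case $f\equiv 0$ by subtracting off the auxiliary function $\phi(t,y):=-\frac{f(t)}{a\lambda+b}\,e^{-\lambda y}$ with, say, $\lambda=1$; this $\phi$ already satisfies the desired pointwise bound, and verifies $a\partial_y\phi(t,0)-b\phi(t,0)=f(t)$. The difference $w:=u^{(B)}-\phi$ then solves \eqref{eq:heatdef} with $f\equiv 0$ and modified source $(\partial_{yy}-\partial_t)\phi$ and modified initial datum $-\phi(0,\cdot)$, both of which inherit exponential decay in $y$ and the hypothesised growth $e^{\alpha t}(1+t)^{-\beta}$ in $t$.

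With the boundary datum absorbed, I would apply Proposition \ref{heatrobin} (with the parameter there chosen as $b/a$ when $a\neq 0$, and the Remark \ref{limits} odd extension when $a=0$) to represent the remaining components as full-line convolutions: $u^{(I)}(t,\cdot)=K(t,\cdot)\star\tilde u_0$ and, by Duhamel, $u^{(R)}(t,\cdot)=\int_0^t K(t-s,\cdot)\star\tilde r(s,\cdot)\,\mathrm{d}s$, where $\tilde u_0,\tilde r$ are the extensions constructed in Section \ref{heat}. The derivatives formula \eqref{eq:derivatives3} then shows that every $\partial_y^k\tilde u_0$ and $\partial_y^k\tilde r(s,\cdot)$ decays like $e^{-\mu_0|y|}$ with a common rate $\mu_0>0$ depending on $\lambda$ and $b/a$ but not on $k$: indeed \eqref{eq:derivatives3} expresses each such derivative as a finite combination of boundary exponentials $e^{-(b/a)y}$ and a convolution against $e^{-(b/a)y}$ applied to $u_0^{(k)}$ or $\partial_y^k r(s,\cdot)$, both of which decay with rate at least $\min(\lambda,b/a)$.

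The pointwise estimate now reduces to the standard Gaussian computation: for any $\mu\in(0,\mu_0)$, splitting the $x$-integral at $0$ and completing the square yields
\begin{equation*}
\int_{\mathbb{R}} K(t,y-x)\,e^{-\mu_0|x|}\,\mathrm{d}x\;\leq\; C\,e^{\mu^{2} t}\,e^{-\mu y},\qquad y\geq 0,\ t>0.
\end{equation*}
Choosing additionally $\mu^{2}<\alpha$, the spurious $e^{\mu^{2} t}$ factor is absorbed: for $u^{(I)}$ directly, and for $u^{(R)}$ through a time integral $\int_0^t e^{\mu^{2}(t-s)}\,e^{\alpha s}(1+s)^{-\beta}\,\mathrm{d}s$, which by Lemma \ref{asympt} (since $\alpha-\mu^{2}>0$) is bounded by $C\,e^{\alpha t}(1+t)^{-\beta}$. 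Higher-order $y$-derivatives are then recovered exactly as in Proposition \ref{heatestimate}: by applying the $k=0$ argument to $\partial_y^{2k}u$, whose boundary data follow from the compatibility conditions and still satisfy the assumed bounds, and by interpolating for odd orders.

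The main obstacle I expect is the uniform-in-$k$ tracking of the decay rate through the chain of reductions: one must verify that a single $\mu_0>0$ (and hence a single $\mu<\min(\mu_0,\sqrt{\alpha})$) serves for all derivatives simultaneously, which is what allows the constant $C$ in \eqref{eq:esstimate} to depend only on $k$ and not on $t$ or $y$. Once the structural observation about \eqref{eq:derivatives3} above is made, this is clean, and the remaining steps — completing the square, Lemma \ref{asympt}, and interpolation — are essentially mechanical.
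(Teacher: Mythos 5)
Your proposal is correct, and it reaches the estimate by a genuinely different route in two places. For the boundary datum, the paper does not lift $f$ by an explicit profile: it first invokes the $H^k$ estimate of Proposition \ref{heatestimate} together with the Sobolev embedding to control the trace $u(t,0)$ by $C\,e^{\alpha t}(1+t)^{-\beta}$, and then re-reads the whole problem as a Dirichlet problem \eqref{eq:heatdir} with boundary value $u(t,0)$, so that \emph{all three} pieces are represented with odd extensions and the Dirichlet kernel $-2\partial_y K$; the $y$-decay of the boundary piece then comes from combining the Gaussian in $\partial_y K(s,y)$ with the factor $e^{-\alpha s}$ via $-A^2-B^2\leq -2AB$, yielding a rate $\sim\sqrt{\alpha}$. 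Your lifting $\phi=-\frac{f(t)}{a\lambda+b}e^{-\lambda y}$ (legitimate since $a+b>0$, and the modified source involves only $f$ and $f'$, both controlled by hypothesis) avoids any appeal to Proposition \ref{heatestimate} for the $k=0$ bound and keeps the Robin extension of Proposition \ref{heatrobin} throughout; the price is that you must check, via \eqref{eq:derivatives3}, that the Robin extension preserves a $k$-independent decay rate $\min(\lambda,b/a)$, which you do. Your treatment of the Gaussian convolution is in fact more careful than the paper's: the inequality $\int_{\mathbb{R}}K(t,x)e^{-\lambda|y-x|}\,\mathrm{d}x\leq Ce^{-\lambda y}$ used for $u_2,u_3$ in the paper is not uniform in $t$ at the rate $\lambda$; one must shrink the rate to some $\mu<\lambda$ and absorb the resulting $e^{\mu^2 t}$ into $e^{\alpha t}(1+t)^{-\beta}$ exactly as you do via $\mu^2<\alpha$ and Lemma \ref{asympt}. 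The only point where I would nudge you toward the paper's method is the passage to higher derivatives: rather than bootstrapping through $\partial_y^{2k}u$ and interpolating odd orders (which for \emph{weighted pointwise} bounds requires a localized Landau--Kolmogorov inequality on unit intervals to preserve $e^{-\mu y}$), it is cleaner to differentiate the convolution representations directly, since the extensions are smooth under the compatibility conditions and all their derivatives decay at the common rate $\mu_0$.
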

\begin{proof} 
By Proposition \ref{heatestimate}, using Sobolev embeddings we know that
$$| u(t,0)|\leq CC_k\frac{e^{\alpha t}}{(1+t)^{\beta}}\qquad \forall t\geq 0. $$
We can now see $u(t,y)$ as the solution of the Dirichlet problem \eqref{eq:heatdir} with a boundary condition $f(t)=u(t,0)$ satisfying the above estimate. In this case, let $\tilde{u}_0$ and $\tilde{r}_0$ be the odd extensions of $u_0$ and $r_0$ respectively. \\

Write $u=u_1+u_2+u_3$ in the notation of Lemma \ref{heatdir}. Recall that
$$u_1(t,y)=\int_0^t -2\partial_y K(s,y)u(t-s,0)\,\mathrm{d}s. $$
We then have 
\begin{align*}\l\frac{e^{\alpha t}}{(1+t)^{\beta}}\r^{-1}|\partial_y^k u_1(t,y)|&\leq \int_0^t 2|\partial_y^{k+1} K(s,y)|\frac{e^{-\alpha s}}{(1-s/(1+t))^{\beta}}\,\mathrm{d}s\\
&\leq C_{\alpha,\beta}\int_0^{\infty}2|\partial_y^{k+1} K(s,y)|e^{-\frac{\alpha}{2}s}\,\mathrm{d}s\\
&=C_{\alpha,\beta}\int_0^{\infty} \frac{p(s,y)}{s^{(3+2k)/2}}e^{-\frac{y^2}{8s}}e^{-\frac{1}{8s}\l y^2+ 4\alpha s^2\r}\,\mathrm{d}s\\ 
&\leq C_{\alpha,\beta}q(y)e^{-\sqrt{\alpha} y/2}\leq Ce^{-\sqrt{\alpha}y/3}.
\end{align*}
where $p$ and $q$ are polynomials, and in the second-to-last step we simply used the inequality $-A^2-B^2\leq -2AB$. Consider now $u_2$ and $u_3$. We have
\begin{align*}
   \partial_y^ku_2(t,y)= \partial_y^kK(t,y)\star \tilde{u}_0(y)\leq \frac{e^{\alpha t}}{(1+t)^{\beta}}\int_{\mathbb{R}}\partial_x^kK(t,x)e^{-\lambda|y-x|}\,\mathrm{d}x\leq C \frac{e^{\alpha t}}{(1+t)^{\beta}}e^{-\lambda y},
\end{align*}
and finally
\begin{align*}
    \partial_y^ku_3(t,y):=\partial_x^kK(t,y)\star_{t,y}\tilde{r}(t,y)&\leq  \int_{\mathbb{R}}\int_0^t\partial_x^k K(t-s,x)\frac{e^{\alpha s}}{(1+s)^{\beta}}\,\mathrm{d}s\,e^{-\lambda|y- x|}\,\mathrm{d}x\\ 
    &\leq C'_{\alpha,\beta}\frac{e^{\alpha t}}{(1+t)^{\beta}}\int_{\mathbb{R}} e^{-\mu x}e^{-\lambda |y-x|}\,\mathrm{d}x\\
    &\leq C'_{\alpha,\beta}e^{-\mu' y}\frac{e^{\alpha t}}{(1+t)^{\beta}}.
\end{align*}
where $\mu'<\min \lll \mu;\lambda\rrr.$

Since all the partial  solutions $u_1,u_2,u_3$ satisfy the required estimate, then the full solutions $u=u_1+u_2+u_3$ also does. 
\end{proof}
\subsection{Asymptotic behavior as $a\to +\infty$}\label{secondpart}

From now on, we will drop the tilde notation and use $u_0^a$, for $a\in (0,\infty]$, to denote the extension of $u_0$ relative to the coefficient $a$ in the mixed condition \eqref{eq:heatrobin2}$_2$. Recall that $u_0^{\infty}$ is just the odd extension. We are interested in the asymptotic behavior as $a\to +\infty$.
\begin{lemma}[$L^{\infty}$ estimate]We have

$$ \|u^a_0-u^{\infty}_0\|_{L^{\infty}}= O(a^{-1}),\qquad a\to +\infty.
$$
\end{lemma}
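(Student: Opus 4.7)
The plan is to compare the two extensions directly using the explicit formula \eqref{eq:extension3}. Since for $y \geq 0$ we simply have $u_0^a(y) = u_0(y) = u_0^\infty(y)$, the difference vanishes on $\mathbb{R}_+$, and I only need to estimate it on $\mathbb{R}_-$. Writing $-y$ for the argument with $y \geq 0$, and using the standing assumption $u_0(0) = 0$, formula \eqref{eq:extension3} specializes to
$$u_0^a(-y) = -u_0(y) + 2\int_0^y e^{-a(y-\tilde{y})} u_0'(\tilde{y})\,\mathrm{d}\tilde{y},$$
while by Remark \ref{limits} the limiting extension is just the odd one: $u_0^\infty(-y) = -u_0(y)$.

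Subtracting these two identities eliminates the common term $-u_0(y)$, so
$$u_0^a(-y) - u_0^\infty(-y) = 2\int_0^y e^{-a(y-\tilde{y})} u_0'(\tilde{y})\,\mathrm{d}\tilde{y}.$$
From here the estimate is immediate: pulling out $\|u_0'\|_{L^\infty}$, which is finite since $u_0 \in C_b^\infty(\mathbb{R}_+)$, and computing the elementary integral of the exponential, I get
$$|u_0^a(-y) - u_0^\infty(-y)| \leq 2\|u_0'\|_{L^\infty} \cdot \frac{1 - e^{-ay}}{a} \leq \frac{2\|u_0'\|_{L^\infty}}{a},$$
uniformly in $y \geq 0$. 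Taking the supremum over all of $\mathbb{R}$ yields the claimed $O(a^{-1})$ bound.

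There is no real obstacle here: the estimate is essentially the second of the two bounds that were combined into \eqref{eq:boundshear}, sharpened to a difference rather than a norm of $u_0^a$ itself. The only small point worth flagging is that the constant in the $O(a^{-1})$ depends linearly on $\|u_0'\|_{L^\infty}$, which will be needed when this lemma is invoked with varying data.
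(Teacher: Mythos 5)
Your proof is correct and follows essentially the same route as the paper: the paper likewise observes that the difference vanishes for $y\geq 0$, derives the identity $u_0^a(-y)-u_0^{\infty}(-y)=2\int_0^y e^{-a(y-\tilde{y})}u_0'(\tilde{y})\,\mathrm{d}\tilde{y}$ from the explicit extension formulas under the standing assumption $u_0(0)=0$, and bounds the integral by $2a^{-1}\sup_{[0,y]}|u_0'|$. No gaps.
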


\begin{proof}By \eqref{eq:extension2} we have, for all $y\geq 0$,
\begin{align*}
    u^a_0(y)-u^{\infty}_0(y)&=0,\\ 
    u^a_0(-y)-u^{\infty}_0(-y)&=2\int_0^y e^{-a(y-\tilde{y})}u_0'(\tilde{y})\,\mathrm{d}\tilde{y}.
\end{align*}
Hence
$$
    \left|u^a_0(y)-u^{\infty}_0(y)\right|\leq 2a^{-1}\sup_{[0,y]}|u_0'|,
$$
from which the estimate follows.
\end{proof}
\begin{lemma}[$L^1$ estimate] \label{l1est} Assume that $u_0' \in L^1(\mathbb{R}_+)$. Then $u^a_0-u^{\infty}_0\in L^1(\mathbb{R}_+)$ and
$$\|u^a_0-u^{\infty}_0\|_{L^1}=O(a^{-1}),\qquad a\to +\infty. $$
\end{lemma}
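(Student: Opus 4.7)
The plan is to reduce the $L^1$ norm to an integral on the negative half-line via the explicit formula from the preceding lemma and then swap the order of integration to extract the $a^{-1}$ factor. Since both extensions $u_0^a$ and $u_0^\infty$ coincide with $u_0$ on $\mathbb{R}_+$, the difference is supported on $\mathbb{R}_-$, and by \eqref{eq:extension2} (or equivalently by the computation done in the proof of the preceding $L^\infty$ estimate),
\begin{equation*}
u_0^a(-y) - u_0^\infty(-y) = 2\int_0^y e^{-a(y-\tilde{y})} u_0'(\tilde{y}) \, \mathrm{d}\tilde{y}, \qquad y \geq 0.
\end{equation*}

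Changing variables $y \mapsto -y$, the $L^1$ norm of the difference equals
\begin{equation*}
2\int_0^\infty \left| \int_0^y e^{-a(y-\tilde{y})} u_0'(\tilde{y}) \, \mathrm{d}\tilde{y} \right| \mathrm{d}y,
\end{equation*}
which I would bound above by moving the absolute value inside and applying Tonelli's theorem to swap the order of integration. The inner integral in $y$ over $[\tilde{y},\infty)$ of $e^{-a(y-\tilde{y})}$ equals $1/a$, leaving the factor $\|u_0'\|_{L^1(\mathbb{R}_+)}$, and so
\begin{equation*}
\|u_0^a - u_0^\infty\|_{L^1} \leq \frac{2}{a}\|u_0'\|_{L^1(\mathbb{R}_+)}.
\end{equation*}

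This is entirely elementary; the only thing to verify is the legitimacy of the order swap, which is automatic once we note that the integrand is non-negative and that $u_0' \in L^1(\mathbb{R}_+)$ by assumption. There is no real obstacle in this proof — unlike the $L^\infty$ version, where one only has the trivial estimate by $2a^{-1}\sup |u_0'|$ on each interval $[0,y]$, here the exponential damping $e^{-a(y-\tilde{y})}$ is precisely what produces the $1/a$ uniformly in $y$ after integration.
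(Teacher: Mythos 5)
Your proof is correct and follows essentially the same route as the paper: both start from the identity $u_0^a(-y)-u_0^\infty(-y)=2\int_0^y e^{-a(y-\tilde y)}u_0'(\tilde y)\,\mathrm{d}\tilde y$ (valid under the standing assumption $u_0(0)=0$), move the absolute value inside, apply Tonelli to swap the order of integration, and compute the inner $y$-integral as $a^{-1}$ to obtain the bound $2a^{-1}\|u_0'\|_{L^1}$. No gaps.
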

\begin{proof} We have
\begin{align*}
    \int_{\mathbb{R}_+}\left|u^a_0(-y)-u^{\infty}_0(-y)\right|&\leq 2\int_0^{\infty}\int_0^y e^{-a(y-\tilde{y})}|u_0'(\tilde{y})|\,\mathrm{d}\tilde{y}\,\mathrm{d}y=(\star).
\end{align*}
Because the function is non-negative, by Tonelli's theorem we can swap the order of integration:
\begin{align*}
    (\star)=2\int_0^{\infty}e^{a\tilde{y}} \int_{\tilde{y}}^{\infty}e^{-ay}\,\mathrm{d}y\,|u_0'(\tilde{y})|\,\mathrm{d}\tilde{y}=a^{-1}\int_0^{\infty}|u_0'(\tilde{y})|\,\mathrm{d}\tilde{y}=O(a^{-1}).
\end{align*}

\end{proof}
\begin{corollary}[$L^p$ estimate] Assume that $u_0'\in L^1(\mathbb{R}_+)$. Then for any $p\in [1,\infty]$, we have $u^a_0-u^{\infty}_0\in L^p(\mathbb{R}_+)$ and
$$\|u^a_0-u^{\infty}_0\|_{L^p}=O(a^{-1}),\qquad a\to +\infty. $$
\end{corollary}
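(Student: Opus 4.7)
The plan is to obtain the $L^p$ bound by interpolating between the $L^1$ bound just proved in Lemma \ref{l1est} and the $L^\infty$ bound proved in the preceding lemma. Both bounds are of order $O(a^{-1})$, so the resulting $L^p$ bound will automatically be of order $O(a^{-1})$ as well, independently of $p$.

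More concretely, for any measurable function $f$ and any $p\in(1,\infty)$ we have the standard interpolation inequality
\begin{equation*}
\|f\|_{L^p}\leq \|f\|_{L^1}^{1/p}\,\|f\|_{L^\infty}^{1-1/p},
\end{equation*}
which follows from $|f|^p=|f|\cdot|f|^{p-1}\leq |f|\cdot \|f\|_{L^\infty}^{p-1}$. Applying this to $f=u_0^a-u_0^\infty$, together with the two previous lemmas, I would get
\begin{equation*}
\|u_0^a-u_0^\infty\|_{L^p}\leq \bigl(C a^{-1}\bigr)^{1/p}\bigl(C a^{-1}\bigr)^{1-1/p}=O(a^{-1}),\qquad a\to +\infty.
\end{equation*}
The endpoint cases $p=1$ and $p=\infty$ are exactly the two previous lemmas, so nothing new is required there.

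There is essentially no obstacle: the $L^1$ estimate already required the assumption $u_0'\in L^1(\mathbb{R}_+)$, and the $L^\infty$ estimate only needed $u_0'$ bounded, which is already part of the standing hypothesis $u_0\in C_b^\infty(\mathbb{R}_+)$. So the corollary is just a one-line consequence of the two preceding lemmas via log-convexity of $L^p$ norms.
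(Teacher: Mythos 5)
Your proof is correct and is essentially the paper's own argument: the paper also deduces the corollary from the preceding $L^1$ and $L^\infty$ lemmas by interpolation (it additionally notes, as an alternative, that one could write the difference as a convolution with $2e^{-ay}$ and apply Young's inequality). Spelling out the log-convexity inequality $\|f\|_{L^p}\leq \|f\|_{L^1}^{1/p}\|f\|_{L^\infty}^{1-1/p}$ is a fine way to make the one-line interpolation explicit.
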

\begin{proof}
Follows from the previous two lemmas by interpolation.

Alternatively, we could notice that $\partial^k u_0^a(y)-\partial^k u_0^{\infty}(y)= 2e^{-a\tilde{y}}\star u_0^{(k+1)}(\tilde{y})$, where the convolution is over $[0,y]$. Applying Young's inequality for convolutions, we obtain for all $p\in [1,\infty]$:
$$\|\partial^ku_0^a-\partial^ku_0^{\infty}\|_{L^p}\leq 2 \|e^{-ay}\|_{L^1}\|u_0^{(k+1)}\|_{L^p}=2a^{-1}\|u_0^{(k+1)}\|_{L^p}=O(a^{-1}). $$
\end{proof}

\begin{comment}
\begin{prop}Fix $k\in \mathbb{Z}_{\geq 0}$. Assume that there exist constants $C,\alpha>0$ independent from $a$, such that
\begin{equation}
    | \partial^k u_0(y)|+|\partial^{k+1}u_0(y)| \leq C e^{-\alpha y},\qquad \forall y \geq 0.
\end{equation}
Then there exists a constant $C'>0$, independent from $a\to +\infty$, such that
\begin{equation}
    |\partial^k \tilde{u}^a_0(y)|\leq C' e^{-\alpha y}, \qquad \forall y \geq 0.
\end{equation}

\end{prop}
\begin{proof} As usual, we can assume $\alpha <a$ as $a\to +\infty$.
By \eqref{eq:derivatives}, we have
\begin{align*}|\partial^k\tilde{u}_0^a(-y)|&\leq Ce^{-\alpha y}+ 2e^{-ay}\sum_{j=0}^k (-a)^{k-j}\partial^ju_0(0)+2C\frac{e^{-\alpha y}-e^{-ay}}{a-\alpha}.
\end{align*}
For the second term, since $a>\alpha$,  we know that there exists a constant $C'>0$, independent from $a$, such that
$$e^{-ay}\sum_{j=0}^k (-a)^{k-j}\partial^j u_0(0)\leq C' e^{-\alpha y},\qquad \forall y\geq 0. $$

\end{proof}
\end{comment}

We now look for higher order estimates. From the expression \eqref{eq:derivatives3} we deduce that
$$(-1)^k\lim_{y\to 0^+}\partial^ku_0^a(-y)= \partial^k u_0(0)+2\sum_{j=0}^{k-1}(-a)^{k-j}\partial^j u_0(0). $$
This tells us that $\tilde{u}_0$ is continuous. But for higher derivatives, the gap between $\tilde{u}_0^{(k)}$ and the odd extension of $u_0$ increases with $a$. In general, we have
\begin{align*}
    \partial^k u_0^a(-y)-\partial^ku_0^{\infty}(-y)=2e^{-ay}\sum_{j=0}^k(-a)^{k-j}u_0^{(j)}(0)+2\int_0^{y}e^{-a(y-\tilde{y})}u_0^{(k+1)}(\tilde{y})\mathrm{d}\tilde{y}.
\end{align*}
In particular, due to the discontinuity at $y=0$, the derivative $\partial^k u_0^a$ does not even belong to $L^p(\mathbb{R})$ unless $u_0^{(j)}(0)=0$ for all $j=0,\dots, k-1$. Requiring the weaker condition $u_0^{(2k+1)}(0)=au_0^{(2k)}(0)$ for all $k\in \mathbb{Z}_{\geq 0}$, which corresponds to enforcing the compatibility conditions of all orders, would also get rid of the problematic terms. However, that can only be true for multiple values of $a$ if $u_0^{(2k+1)}(0)=u_0^{(2k)}(0)=0$.

We will therefore work under the following assumption:
\begin{equation}\label{eq:assumpt01}
    \begin{cases}\partial^k u_0(0)=0, \\
    \partial^{k+1} u_0 \in L^1(\mathbb{R}_+),
    \end{cases}\qquad \forall k\geq 0.
\end{equation}
The first condition implies that \eqref{eq:derivatives} can be simplified to 
\begin{equation}\label{eq:derivatives2}
    (-1)^k\tilde{u}_0^{(k)}(-y)= -u_0^{(k)}(y)+2\int_0^y e^{-a(y-\tilde{y})}u_0^{(k+1)}(\tilde{y})\,\mathrm{d}\tilde{y}.
\end{equation}
In particular, $\tilde{u}_0\in C^{\infty}(\mathbb{R})$. This is crucial as it allows us to consider the derivatives of $\tilde{u}_0$ in Sobolev spaces.

The second condition, together with $u_0\in C_b^{\infty}(\mathbb{R}_+)$, implies that $u_0\in W^{\infty,p}$ for any $p\in [1,\infty]$, in particular $u_0\in H^s$ for all $s\geq 0$. Furthermore, since $u_0\in W^{k+1,1}(\mathbb{R}_+)$ implies $\lim_{y\to +\infty}\partial^k u_0(y)=0$, we deduce that 
$$\lim_{y\to +\infty}u_0(y)=c\in \mathbb{R}, \qquad \lim_{y\to +\infty}\partial^k u_0(y)=0\;\forall k\geq 1. $$

Unfortunately, these assumptions also mean that this we have to exclude profiles $u_0$ which are analytic on $\mathbb{R}_+$ (except for $u_0=0$). However, we can include profiles in the Gevrey class $G^{\gamma}$ for all $\gamma>1$, such as \( \displaystyle u_0(y)= e^{-y^{-(\gamma-1)^{-1}}}. \)

\begin{prop}\label{convid}Assuming \eqref{eq:assumpt01}, for all $k\in \mathbb{Z}_{\geq 0}$ and $p\in [1,\infty]$ we have
\begin{equation}\label{eq:convid}\|u^a_0-u^{\infty}_0\|_{W^{k,p}}=O(a^{-1}),\qquad a\to +\infty. \end{equation}
\end{prop}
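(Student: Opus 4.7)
The plan is essentially to iterate and refine the "alternative" argument already used at the end of the previous corollary. First I would invoke identity \eqref{eq:derivatives2}, which applies for every $k \in \mathbb{Z}_{\geq 0}$ because assumption \eqref{eq:assumpt01} ensures the vanishing of all derivatives $\partial^j u_0(0)$. Comparing with the odd extension $u_0^\infty$, whose derivatives satisfy $(u_0^\infty)^{(k)}(-y) = (-1)^{k+1} u_0^{(k)}(y)$ for $y \geq 0$, the pointwise terms $-u_0^{(k)}(y)$ on both sides cancel, leaving the explicit formula
$$(u_0^a)^{(k)}(-y) - (u_0^\infty)^{(k)}(-y) = 2(-1)^k \int_0^y e^{-a(y-\tilde{y})} u_0^{(k+1)}(\tilde{y})\,\mathrm{d}\tilde{y}, \qquad y \geq 0.$$
Since both extensions agree with $u_0$ on $\mathbb{R}_+$, the difference $u_0^a - u_0^\infty$ is supported in $\mathbb{R}_-$, so the $W^{K,p}(\mathbb{R})$ norm is controlled by the $L^p(\mathbb{R}_+)$ norms of these convolution integrals for $0 \leq k \leq K$.

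Next I would recognize the right-hand side as the one-sided convolution of $e^{-ay}\mathbf{1}_{\mathbb{R}_+}$ with $u_0^{(k+1)}\mathbf{1}_{\mathbb{R}_+}$. Young's inequality for convolutions then immediately yields
$$\|(u_0^a)^{(k)} - (u_0^\infty)^{(k)}\|_{L^p(\mathbb{R})} \leq 2 \|e^{-ay}\|_{L^1(\mathbb{R}_+)}\, \|u_0^{(k+1)}\|_{L^p(\mathbb{R}_+)} = \frac{2}{a}\, \|u_0^{(k+1)}\|_{L^p(\mathbb{R}_+)}.$$
Summing these bounds over $k = 0, 1, \ldots, K$ gives \eqref{eq:convid} with a constant depending on $K$ but not on $a$.

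The only mild point to check, and the nearest thing to an obstacle here, is that $u_0^{(k+1)} \in L^p(\mathbb{R}_+)$ for every $p \in [1,\infty]$, uniformly in $a$. This is immediate from the interpolation inequality $\|f\|_{L^p} \leq \|f\|_{L^1}^{1/p}\|f\|_{L^\infty}^{1-1/p}$ applied to $f = u_0^{(k+1)}$, which lies in $L^1$ by the second half of \eqref{eq:assumpt01} and in $L^\infty$ by the standing hypothesis $u_0 \in C_b^\infty(\mathbb{R}_+)$. Thus the constant in \eqref{eq:convid} is determined entirely by the $L^1$ and $L^\infty$ norms of the first $K+1$ derivatives of $u_0$, and in particular is independent of $a$, completing the argument.
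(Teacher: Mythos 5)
Your proof is correct and follows essentially the same route as the paper: both start from \eqref{eq:derivatives2}, note that the difference lives on $\mathbb{R}_-$ as a one-sided convolution with $e^{-ay}$, and extract the factor $a^{-1}$ from $\|e^{-ay}\|_{L^1(\mathbb{R}_+)}$. The only cosmetic difference is that the paper proves the $L^1$ and $L^\infty$ endpoints separately and interpolates the conclusion, whereas you apply Young's inequality for each $p$ and interpolate the hypothesis $u_0^{(k+1)}\in L^1\cap L^\infty$ — exactly the ``alternative'' argument the paper itself sketches in the preceding corollary.
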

\begin{proof}By \eqref{eq:derivatives2}, for any $k\in \mathbb{Z}_{\geq 0}$ we have
\begin{align*}
    \|\partial^k u_0^a-\partial^k u_0^{\infty}\|_{L^{\infty}}&\leq 2 \|\partial^{k+1}u_0\|_{L^{\infty}}\sup_{y\geq 0}\int_0^y e^{-a(y-\tilde{y})}\,\mathrm{d}\tilde{y}\\
    &\leq \frac{2}{a}\|\partial^{k+1}u_0\|_{L^{\infty}}=O(a^{-1}),
\end{align*}
and
\begin{align*}
    \|\partial^k u^a_0-\partial^k u^{\infty}_0\|_{L^1}&\leq 2\int_0^{\infty}\int_0^y e^{-a(y-\tilde{y})}|\partial^{k+1}u_0(\tilde{y})|\,\mathrm{d}\tilde{y}\,\mathrm{d}y\\ 
    &= 2\int_0^{\infty}e^{a\tilde{y}}\int_{\tilde{y}}^{\infty} e^{-ay}\,\mathrm{d}y\,|\partial^{k+1}u_0(\tilde{y})|\,\mathrm{d}\tilde{y}\\ 
    &=\frac{2}{a}\|\partial^{k+1}u_0\|_{L^1}=O(a^{-1}).
\end{align*}
By interpolation, \eqref{eq:convid} follows. 
\end{proof}
Now we state the corresponding result for the solution at any time. Let us call $u^a$ and $u^{\infty}$, respectively, the evolutions of the initial datas $u_0^a$ and $u_0^{\infty}$ under \eqref{eq:heatrobin2}, given by convolution with the kernel $K$. 
\begin{corollary}\label{conv} Assuming \eqref{eq:assumpt01}, for all $k\geq 0$ and $p\in [1,\infty]$ we have
\begin{equation}\label{eq:conv}\|u^a-u^{\infty}\|_{W^{k,p}}=O(a^{-1}),\qquad a\to +\infty, \end{equation}
uniformly for all $t\geq 0$.
\end{corollary}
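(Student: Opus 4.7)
The plan is to reduce the claim to the already-proved estimate on the initial data (Proposition \ref{convid}) by exploiting the fact that both evolutions are given by the same convolution operator in the same extended variable. By Proposition \ref{heatrobin}, the solution of \eqref{eq:heatrobin2} is $u^a(t,\cdot)=K(t,\cdot)\star u_0^a$, where the convolution is over $\mathbb{R}$, and the same formula with $a=+\infty$ (odd extension) gives $u^\infty(t,\cdot)=K(t,\cdot)\star u_0^\infty$. Subtracting, I obtain the key identity
\begin{equation*}
u^a(t,\cdot)-u^\infty(t,\cdot)=K(t,\cdot)\star\bigl(u_0^a-u_0^\infty\bigr),
\end{equation*}
which holds on all of $\mathbb{R}$.

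Next, since $u_0^a-u_0^\infty\in C^\infty(\mathbb{R})$ by assumption \eqref{eq:assumpt01} (cf.\ \eqref{eq:derivatives2}), each spatial derivative commutes with the convolution, so for any $k\geq 0$,
\begin{equation*}
\partial_y^k\bigl(u^a-u^\infty\bigr)(t,\cdot)=K(t,\cdot)\star \partial^k\bigl(u_0^a-u_0^\infty\bigr).
\end{equation*}
Applying Young's convolution inequality with the $L^1$ factor placed on the kernel, I get for every $p\in[1,\infty]$,
\begin{equation*}
\bigl\|\partial_y^k(u^a-u^\infty)(t)\bigr\|_{L^p(\mathbb{R}_+)}\leq \bigl\|\partial_y^k(u^a-u^\infty)(t)\bigr\|_{L^p(\mathbb{R})}\leq \|K(t,\cdot)\|_{L^1(\mathbb{R})}\,\bigl\|\partial^k(u_0^a-u_0^\infty)\bigr\|_{L^p(\mathbb{R})}.
\end{equation*}
The crucial point providing uniformity in time is that $\|K(t,\cdot)\|_{L^1(\mathbb{R})}=1$ for all $t>0$, since $K$ is a probability density in $y$ for each fixed $t$.

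Combining these bounds with Proposition \ref{convid}, which yields $\|u_0^a-u_0^\infty\|_{W^{k,p}(\mathbb{R}_+)}=O(a^{-1})$, and hence the same bound for the extensions on $\mathbb{R}$ (since both $u_0^a$ and $u_0^\infty$ are defined by extending $u_0$ from $\mathbb{R}_+$, and only the negative-half behavior contributes to the difference), I conclude
\begin{equation*}
\|u^a-u^\infty\|_{W^{k,p}(\mathbb{R}_+)}\leq \bigl\|u_0^a-u_0^\infty\bigr\|_{W^{k,p}(\mathbb{R})}=O(a^{-1}),\qquad a\to+\infty,
\end{equation*}
with a constant independent of $t\geq 0$. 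There is essentially no obstacle here beyond checking that the constant coming from Young's inequality is genuinely $t$-independent; the only point requiring a moment's care is the smoothness of the extended initial data (guaranteed by \eqref{eq:assumpt01}), which legitimizes differentiating under the convolution and invoking Proposition \ref{convid} for each $k$.
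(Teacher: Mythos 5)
Your argument is correct and coincides with the paper's own proof: both write $u^a-u^\infty = K(t)\star(u_0^a-u_0^\infty)$, apply Young's inequality using $\|K(t)\|_{L^1}=1$ for the time-uniformity, and invoke Proposition \ref{convid}. No issues.
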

\begin{proof}We have
$$\|u^a-u^{\infty}\|_{W^{k,p}}=\|K\star (u_0^a-u_0^{\infty})\|_{W^{k,p}}\leq \|K\|_{L^1}\|u_0^a-u_0^{\infty}\|_{W^{k,p}}.$$
Since $\|K(t)\|_{L^1}=1$ for all $t\geq 0$, the thesis follows from  Proposition \ref{convid}. 

\end{proof}
\subsection{Asymptotic behavior as $a\to 0^+$}\label{lim0}
In this subsection we will consider the limit $a\to 0^+$. Recall that for $a=0$, the function $u_0^0$ is the even extension of $u_0$ from $\mathbb{R}_+$ to $\mathbb{R}$, so that $K(t)\star u_0^0$ is the solution to the Neumann problem for the heat equation with initial condition $u_0$. 

We look for results of convergence $u_0^a\to u_0^0$ as $a\to 0^+$. If we look at \eqref{eq:extension}, we immediately notice that, under the assumption \eqref{eq:assumpt}, there is a problem with uniform convergence: we have, for all $y>0$,
$$|{u}_0^a(-y)-u_0^0(-y)|\leq 2a \int_0^y e^{-a(y-\tilde{y})}|u_0(\tilde{y})|\,\mathrm{d}y. $$
However, the right hand side does not vanish as $a\to 0^+$ unless $u_0\in L^1(\mathbb{R}_+)$, which evades assumption \eqref{eq:assumpt01}. 

\begin{example}Let $u_0(y)=1$ for all $y\geq 0$. Then 
\begin{align*}
    u_0^0(y)=&1 \qquad \forall y \geq 0;\\ 
    u_0^a(y)=&\begin{cases} 1 & y\geq 0;\\ 
    -1+2e^{ay} & y<0
    \end{cases}
\end{align*}
As a result, $\sup_{y\in \mathbb{R}}|u_0^a(y)-u_0^0(y)|=2$ for all $a> 0$.
\end{example}
On the other hand, this limit is better behaved compared to $a\to+\infty$ in other aspects. Looking at \eqref{eq:derivatives3}, we notice that this time the terms at the boundary
$$2e^{-ay}\sum_{j=0}^{k-1}(-a)^{k-j}u_0^{(j)}(0) $$
all vanish uniformly in $y$ as $a\to 0^+$. However, we still need to require $u_0^{(j)}=0$ for all $j$, otherwise $u_0^a$ (and hence $u_0^a-u_0^0$) does not belong to $H^k$ for $k\geq 2$.

For this subsection, we then replace the assumption \eqref{eq:assumpt01} with the new assumption:
\begin{equation}\label{eq:assumpt02}\partial^k u_0(0)=0;\qquad  \partial^k u_0\in L^1(\mathbb{R}_+)\qquad \forall k\geq 0.\end{equation}
\begin{prop}Assuming \eqref{eq:assumpt02}, we deduce that for all $k\geq 0$,
$$\|u_0^a -u_0^0\|_{W^{k,\infty}}=O(a)\qquad \text{ as }a\to 0^+. $$
\end{prop}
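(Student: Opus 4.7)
The plan is to deduce this directly from the derivative formula \eqref{eq:derivatives3}, exploiting the fact that assumption \eqref{eq:assumpt02} makes all the boundary terms vanish and supplies the $L^1$ control needed to bound the remaining integral uniformly in $y$.

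First, I would observe that, for the even extension, $(u_0^0)^{(k)}(-y) = (-1)^k u_0^{(k)}(y)$ for all $y \geq 0$. On the other hand, since $u_0^{(j)}(0) = 0$ for every $j$, formula \eqref{eq:derivatives3} collapses to
\begin{equation*}
(-1)^k u_0^a{}^{(k)}(-y) = u_0^{(k)}(y) - 2a\int_0^y e^{-a(y-\tilde{y})} u_0^{(k)}(\tilde{y})\,\mathrm{d}\tilde{y}.
\end{equation*}
Subtracting these two identities, the $u_0^{(k)}(y)$ contributions cancel and one obtains, for every $y \geq 0$,
\begin{equation*}
\bigl|\partial^k u_0^a(-y) - \partial^k u_0^0(-y)\bigr| \leq 2a\int_0^y e^{-a(y-\tilde{y})} |u_0^{(k)}(\tilde{y})|\,\mathrm{d}\tilde{y} \leq 2a\,\|u_0^{(k)}\|_{L^1(\mathbb{R}_+)}.
\end{equation*}
Since both extensions agree with $u_0$ on $\mathbb{R}_+$, the difference vanishes on the positive half-line, so the above bound in fact controls the $L^\infty(\mathbb{R})$ norm of $\partial^k u_0^a - \partial^k u_0^0$.

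Finally, summing over $j = 0, 1, \dots, k$ and using that assumption \eqref{eq:assumpt02} guarantees $u_0^{(j)} \in L^1(\mathbb{R}_+)$ for all $j$, we obtain
\begin{equation*}
\|u_0^a - u_0^0\|_{W^{k,\infty}(\mathbb{R})} \leq 2a \sum_{j=0}^{k} \|u_0^{(j)}\|_{L^1(\mathbb{R}_+)} = O(a), \qquad a \to 0^+,
\end{equation*}
as claimed.

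I do not expect a genuine obstacle here: the proof is structurally parallel to Proposition \ref{convid} but with the roles of the limit and the control swapped. The only point worth emphasizing is \emph{why} the $L^1$ assumption on all derivatives (rather than just the next one, as in \eqref{eq:assumpt01}) is essential: unlike the $a\to+\infty$ regime, the factor $a$ that appears in front of the integral in \eqref{eq:extension2} is what produces the smallness, so one cannot exchange it for $a^{-1}$ after a $\,\mathrm{d}\tilde{y}$-integration as before, and the integrand itself must be integrable. The example $u_0\equiv 1$ preceding the statement already illustrates that without this integrability the conclusion fails.
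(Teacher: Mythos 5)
Your proof is correct and follows essentially the same route as the paper: both start from the derivative formula \eqref{eq:derivatives3}, note that the boundary sum is harmless (you use that it vanishes under \eqref{eq:assumpt02}; the paper simply bounds it by $O(a)$), and control the remaining convolution term by $2a\,\|u_0^{(k)}\|_{L^1}$. The closing remark on why the full $L^1$ hypothesis of \eqref{eq:assumpt02} is needed here, in contrast to the $a\to+\infty$ regime, matches the paper's discussion preceding the statement.
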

\begin{proof}
From \eqref{eq:derivatives3} we deduce that for all $k\geq 0$,
\begin{align*}|(-1)^k\partial^k u_0^a(y)-\partial_k u_0^0(y)\|_{L^{\infty}}&\leq 2e^{-ay}\sum_{j=0}^{k-1}a^{k-j}|u_0^{(j)}(0)|+  2a\int_0^y e^{-a(y-\tilde{y})}|u_0^{(k)}(\tilde{y})|\,\mathrm{d}\tilde{y}\\
&\leq O(a)+ 2a \|u_0^{(k)}\|_{L^1}=O(a). \end{align*}
\end{proof} On the other hand, the assumption that all derivatives lie in $L^1(\mathbb{R}_+)$ must be maintained.

If we integrate by parts \eqref{eq:derivatives3} one more time, we obtain for all $k\geq 1$
\begin{align}\label{eq:derivatives4}
    (-1)^k\tilde{u}_0^{(k)}(-y)&=u_0^{(k)}(y)-2au_0^{(k-1)}(y)+ 2e^{-ay}\sum_{j=0}^{k-2} (-a)^{k-j}u_0^{(j)}(0)+2a^2\int_0^y e^{-a(y-\tilde{y})}u_0^{(k-1)}(\tilde{y})\,\mathrm{d}\tilde{y}.
\end{align}
This is also valid for $k=0$, provided that we define
$$u_0^{(-1)}(y):=\int_0^y u_0(z)\,\mathrm{d}z. $$

\begin{prop} Assuming \eqref{eq:assumpt02}, we deduce that for all $k\geq 1$,
$$\|\partial^ku_0^a-\partial^ku_0^0\|_{L^1}=O(a)\qquad \text{ as }a \to 0^+. $$
\end{prop}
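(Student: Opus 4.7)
The plan is to argue just as in the $a\to+\infty$ case (Proposition \ref{convid}), but starting from the once-more-integrated-by-parts formula \eqref{eq:derivatives4} rather than \eqref{eq:derivatives2}. The key observation is that \eqref{eq:derivatives4} contains the term $-2au_0^{(k-1)}(y)$, which is explicitly $O(a)$, while all boundary contributions $2e^{-ay}\sum_{j=0}^{k-2}(-a)^{k-j}u_0^{(j)}(0)$ vanish identically under assumption \eqref{eq:assumpt02}. In the limit $a\to 0^+$, formula \eqref{eq:derivatives4} with $a=0$ reduces to $(-1)^k\partial^k u_0^0(-y)=u_0^{(k)}(y)$, which is consistent with $u_0^0$ being the even extension of $u_0$.

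First I would subtract these two expressions to obtain, for $y>0$ and $k\geq 1$, the pointwise identity
\begin{equation*}
(-1)^k\left[\partial^k u_0^a(-y)-\partial^k u_0^0(-y)\right]=-2a\,u_0^{(k-1)}(y)+2a^2\int_0^y e^{-a(y-\tilde y)}u_0^{(k-1)}(\tilde y)\,\mathrm{d}\tilde y.
\end{equation*}
Since $u_0^a$ and $u_0^0$ both coincide with $u_0$ on $\mathbb{R}_+$, it suffices to estimate the $L^1(\mathbb{R}_-)$ norm of the left-hand side. The first term immediately contributes $2a\|u_0^{(k-1)}\|_{L^1(\mathbb{R}_+)}$, and this is finite for every $k\geq 1$ because \eqref{eq:assumpt02} includes the case $k=0$, giving $u_0\in L^1(\mathbb{R}_+)$.

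For the second (convolution-type) term, I would recognise it as a one-sided convolution of $e^{-ay}\mathbf{1}_{y>0}$ with $u_0^{(k-1)}\mathbf{1}_{y>0}$, and apply Young's convolution inequality exactly as was done in the corollary following Lemma \ref{l1est}:
\begin{equation*}
\left\|e^{-a\cdot}\mathbf{1}_{(0,\infty)}\ast\bigl(u_0^{(k-1)}\mathbf{1}_{(0,\infty)}\bigr)\right\|_{L^1(\mathbb{R}_+)}\leq \|e^{-a\cdot}\|_{L^1(\mathbb{R}_+)}\|u_0^{(k-1)}\|_{L^1(\mathbb{R}_+)}=\frac{1}{a}\|u_0^{(k-1)}\|_{L^1(\mathbb{R}_+)}.
\end{equation*}
Multiplying by the prefactor $2a^2$ yields another $O(a)$ contribution, and summing the two bounds gives $\|\partial^k u_0^a-\partial^k u_0^0\|_{L^1(\mathbb{R})}\leq 4a\|u_0^{(k-1)}\|_{L^1(\mathbb{R}_+)}=O(a)$, which is exactly the claim.

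There is no real obstacle: the proof is essentially a bookkeeping exercise once one notices that integrating by parts a second time trades the $-2a\int$ term of \eqref{eq:derivatives3} (which is not $O(a)$ in $L^1$ when all derivatives of $u_0$ are merely bounded) for the genuinely $O(a)$ contributions appearing in \eqref{eq:derivatives4}. The only place one must be mildly careful is verifying that the $j=k-1$ boundary term absorbed during that integration by parts is precisely the one that cancels $a u_0^{(k-1)}(0)$, reducing the sum to $j=0,\dots,k-2$; but in any case this entire sum vanishes under \eqref{eq:assumpt02}, so we never need to track it.
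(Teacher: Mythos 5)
Your proof is correct and follows essentially the same route as the paper: both start from \eqref{eq:derivatives4}, use \eqref{eq:assumpt02} to kill the boundary sum, bound the explicit $-2au_0^{(k-1)}$ term by $2a\|u_0^{(k-1)}\|_{L^1}$, and control the $2a^2\int$ term by a factor $a^{-1}$ gain (the paper via Tonelli and explicit integration, you via Young's inequality — the same computation). No gaps.
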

\begin{proof}
From \eqref{eq:derivatives4}, we obtain 
\begin{align*}
    \|\partial^k u^a_0-\partial^k u^{0}_0\|_{L^1}&\leq O(a)+2a^2 \int_0^{\infty}\int_0^y e^{-a(y-\tilde{y})}|\partial^{k-1}u_0(\tilde{y})|\,\mathrm{d}\tilde{y}\,\mathrm{d}y\\ 
&=O(a)+2a^2\int_0^{\infty}e^{a\tilde{y}}\int_{\tilde{y}}^{\infty} e^{-ay}\,\mathrm{d}y\,|\partial^{k-1}u_0(\tilde{y})|\,\mathrm{d}\tilde{y}\\ 
    &=2a\|\partial^{k-1}u_0\|_{L^1}=O(a).
\end{align*}
\end{proof}
\begin{remark}If we further assume that
\begin{equation}\label{eq:assumpt3} \qquad y\mapsto \int_0^y u_0(z)\,\mathrm{d}z\in L^1(\mathbb{R}), \end{equation}
then the above result can also be extended to $k=0$.
\end{remark}

\begin{prop}Assuming \eqref{eq:assumpt02}, we deduce that for all $k\geq 0$,
$$\|\partial^ku_0^a-\partial^ku_0^0\|_{L^2}=O(a^{1/2})\qquad \text{ as }a \to 0^+. $$
\end{prop}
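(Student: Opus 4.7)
The plan is to split the argument into two cases according to the order $k$, because the structure of $u_0^a-u_0^0$ differs markedly between $k=0$ and $k\geq 1$.

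For $k\geq 1$, I would start from the identity \eqref{eq:derivatives4}. Under assumption \eqref{eq:assumpt02} all the boundary values $u_0^{(j)}(0)$ vanish, so the finite sum disappears; and since $u_0^0$ is the even extension, $(-1)^k\partial^k u_0^0(-y)=u_0^{(k)}(y)$. The formula therefore collapses to
\[
(-1)^k\bigl(\partial^k u_0^a(-y)-\partial^k u_0^0(-y)\bigr)=-2a\,u_0^{(k-1)}(y)+2a^2\!\int_0^y e^{-a(y-\tilde{y})}u_0^{(k-1)}(\tilde{y})\,\mathrm{d}\tilde{y}.
\]
Assumption \eqref{eq:assumpt02} gives $u_0^{(k-1)}\in L^1\cap L^\infty\subset L^2$, so the first term already has $L^2$ norm of order $a$. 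For the second term, Young's inequality in the form $L^2=L^1\ast L^2$ applied to the convolution $(\mathbf{1}_{\mathbb{R}_+}e^{-a\cdot})\ast u_0^{(k-1)}$ gives $2a^2\cdot a^{-1}\|u_0^{(k-1)}\|_{L^2}=O(a)$. Since $O(a)\subset O(a^{1/2})$ as $a\to 0^+$, the claim holds for all $k\geq 1$.

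The main obstacle is the case $k=0$, where no preliminary factor of $a$ can be extracted and the $L^\infty$ norm does not decay at all (as the example $u_0\equiv 1$ exhibited earlier confirms). Starting directly from \eqref{eq:extension2}, I would write
\[
u_0^a(-y)-u_0^0(-y)=-2a\!\int_0^y e^{-a(y-\tilde{y})}u_0(\tilde{y})\,\mathrm{d}\tilde{y}=-2a\,\bigl(\mathbf{1}_{\mathbb{R}_+}e^{-a\cdot}\ast (u_0\mathbf{1}_{\mathbb{R}_+})\bigr)(y),
\]
viewing everything as a convolution on $\mathbb{R}$ after extending $u_0$ by zero. The key idea is to apply Young's inequality in the form $L^2=L^2\ast L^1$, placing the decay on the exponential kernel rather than on $u_0$:
\[
\|u_0^a-u_0^0\|_{L^2(\mathbb{R})}\leq 2a\,\|\mathbf{1}_{\mathbb{R}_+}e^{-a\cdot}\|_{L^2(\mathbb{R})}\,\|u_0\|_{L^1(\mathbb{R}_+)}=2a\cdot(2a)^{-1/2}\|u_0\|_{L^1}=\sqrt{2a}\,\|u_0\|_{L^1}.
\]
Assumption \eqref{eq:assumpt02} guarantees $u_0\in L^1(\mathbb{R}_+)$, so the right-hand side is precisely $O(a^{1/2})$.

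The half-power is essentially sharp inside this strategy: the dual choice $L^2=L^1\ast L^2$ would give only $2a\cdot\|\mathbf{1}_{\mathbb{R}_+}e^{-a\cdot}\|_{L^1}\|u_0\|_{L^2}=O(1)$. The rate $a^{1/2}$ in the statement reflects exactly the trade-off between the $L^1$ and $L^\infty$ integrability of the kernel $e^{-ay}\mathbf{1}_{\mathbb{R}_+}$, and it is this trade-off that becomes binding precisely at $k=0$, where no integration by parts is available to produce an extra $a$ as in the case $k\geq 1$.
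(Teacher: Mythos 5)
Your proof is correct, and the essential step in the binding case $k=0$ --- writing $u_0^a-u_0^0$ as $-2a$ times the convolution of $e^{-a\cdot}\mathbf{1}_{\mathbb{R}_+}$ with $u_0^{(k)}$ and applying Young's inequality in the form $L^2=L^2\star L^1$ with $\|e^{-ay}\|_{L^2}=(2a)^{-1/2}$ --- is exactly the paper's argument, which runs this uniformly for all $k\geq 0$ via \eqref{eq:derivatives3}. Your separate treatment of $k\geq 1$ via \eqref{eq:derivatives4} is a harmless refinement that actually yields the stronger rate $O(a)$ in that range (consistent with interpolating the $L^1$ and $L^\infty$ bounds already established for $k\geq 1$), but it is not needed for the stated $O(a^{1/2})$ conclusion.
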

\begin{proof}We have, by Young's inequality for convolutions:
\begin{align*}
    \|\partial^ku_0^a-\partial^ku_0^0\|_{L^2}=2a\|e^{-a\tilde{y}}\star_{[0,y]} \partial^k u_0^0 \|_{L^2}\leq 2a\|e^{-ay}\|_{L^2}\|\partial^k u_0^0\|_{L^1}=\sqrt{2a}\|\partial^k u_0^0\|_{L^1}=O(a^{1/2}).
\end{align*}
\end{proof}
\begin{corollary}\label{convneu}Assume \eqref{eq:assumpt02} and \eqref{eq:assumpt3}. Then for all $k\geq 0$, $p\in [1,\infty)$ we have
$$\|\partial^ku^a -\partial^ku^0\|_{L^p}=O(a)\qquad \text{ as }a\to 0^+.$$
Assuming only \eqref{eq:assumpt02}, we can still deduce that for all $k\geq 0$,
\begin{align*}\| u^a-u^0\|_{H^k}=O(a^{1/2})\\
\| u^a-u^0\|_{W^{k,\infty}}=O(a).
\end{align*}
\end{corollary}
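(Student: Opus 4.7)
The plan is to mirror the proof of Corollary~\ref{conv}, reducing both statements to the data-level estimates established in the three preceding propositions by way of Young's inequality for convolutions.

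First I would record the identity $u^a - u^0 = K \star (u_0^a - u_0^0)$, where the convolution is taken in the $y$ variable on the whole line; this is immediate from Proposition~\ref{heatrobin} applied to the respective extensions. Since $\|K(t,\cdot)\|_{L^1(\mathbb{R})} = 1$ for every $t \geq 0$, Young's inequality gives the uniform-in-time bound
$$\|\partial^k(u^a - u^0)(t,\cdot)\|_{L^p(\mathbb{R}_+)} \;\leq\; \|\partial^k(u_0^a - u_0^0)\|_{L^p(\mathbb{R})},$$
for every $k \geq 0$ and $p \in [1,\infty]$. Thus the problem reduces entirely to transferring the data-level estimates on the difference of the extensions.

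For the first statement, under both \eqref{eq:assumpt02} and \eqref{eq:assumpt3}, I would combine the $W^{k,\infty}$ bound giving $O(a)$ (which used \eqref{eq:assumpt02} alone) with the $W^{k,1}$ bound giving $O(a)$ (which requires \eqref{eq:assumpt3} precisely to cover the case $k=0$ via the remark following the $L^1$ proposition; for $k\geq 1$ no extra hypothesis is needed). The elementary interpolation $\|f\|_{L^p} \leq \|f\|_{L^1}^{1/p}\|f\|_{L^\infty}^{1-1/p}$ then delivers $\|\partial^k(u_0^a - u_0^0)\|_{L^p} = O(a)$ for every $p \in [1,\infty]$, which transfers to $u^a - u^0$ via the Young-type inequality above. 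For the second set of statements, assuming only \eqref{eq:assumpt02}, I would directly quote the $L^2 = O(a^{1/2})$ proposition, applied to all derivatives up to order $k$, to obtain $\|u^a - u^0\|_{H^k} = O(a^{1/2})$, and similarly invoke the $W^{k,\infty} = O(a)$ proposition for the uniform bound; no interpolation is required here.

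There is no genuine obstacle: the real work has already been carried out in the three propositions preceding the corollary, and the current step is essentially bookkeeping. The only point worth flagging is the precise role of the auxiliary hypothesis \eqref{eq:assumpt3} in the first statement: it enters only to upgrade the $L^1$ convergence rate from $k \geq 1$ down to $k = 0$, so that the interpolation argument closes at the endpoint $p = 1$. Without it, one still recovers the weaker $H^k$ and $W^{k,\infty}$ rates of the second statement, but not the full $L^p$ rate for every $p \in [1,\infty)$.
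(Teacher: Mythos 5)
Your proposal is correct and follows exactly the route the paper intends: the paper omits an explicit proof of Corollary \ref{convneu}, but the mechanism is identical to that of Corollary \ref{conv}, namely $u^a-u^0=K\star(u_0^a-u_0^0)$, Young's inequality with $\|K(t)\|_{L^1}=1$, and the three preceding data-level propositions (with \eqref{eq:assumpt3} entering only to close the $L^1$ endpoint at $k=0$, as you correctly flag).
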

\begin{example}Suppose $u_0(y)=e^{-y}$, which is in $L^1(\mathbb{R}_+)$ with all its derivatives but does not satisfy \eqref{eq:assumpt3}. Then
$$u_0^a(y)=\begin{cases}e^{-y} & y \geq 0; \\ 
e^{y}-\frac{2a}{a-1}(e^y-e^{ay}) & y <0.
\end{cases} $$
Then as $a\to 0^+$,
\begin{align*}\int_{-\infty}^{\infty} (\partial^ku_0^a(y)-\partial^ku_0^0(y))\,\mathrm{d}y&= \frac{2(-1)^ka}{a-1}\l 1- a^{k-1} \r
\to  \begin{cases} 2 & k=0;\\ 
0& k>0.\end{cases}\end{align*}
In other words there is no $L^1$ convergence, although there is $L^1$ convergence of all derivatives. If we consider the $L^2$ norms:
$$\l\int_{-\infty}^{\infty}|u_0^a(y)-u_0^0(y)|^2\,\mathrm{d}y\r^{1/2}=\frac{2a}{|a-1|} \l \frac{\frac{1}{2}(a-1)^2}{a(a+1)}\r^{1/2}=\sqrt{2a}.$$
This shows that the order of convergence of $a^{1/2}$ from Corollary \ref{convneu} is optimal. To be precise, we need to multiply $u_0(y)=e^{-y}$ by a smooth cut-off function $\chi $ with $\chi^{(k)}(0)=0$ for all $k\geq 0$ and $\chi(y)=1$ for all $y\geq 1$. Then, assumption \eqref{eq:assumpt02} is fulfilled and the same estimates hold.
\end{example}
\section{A class of linearly unstable shear flows}\label{unstableflows}
The goal of this section is to prove that there exists an ample class of shear flows $U_s$ satisfying either assumption \eqref{eq:assumpt01} or \eqref{eq:assumpt02} which are linearly unstable for the Euler equation in the sense of Definition \ref{defunst}. We rely on a sufficient condition for linear instability is given by Z. Lin in \cite{zlin}. We state his main result in the half-line case. First of all, for a shear flow $U_s\in C^2(\mathbb{R}_+)$ admitting an inflexion point $y_0$, define the \textbf{inflexion value} $U_0:= U_s(y_0)$ and the function 
$$K(y):=\frac{-U_s''(y)}{U_s(y)-U_0}. $$
We say that $U_s$ is in class $\mathcal{K}^+$ if $K$ is a bounded and strictly positive function on $(0,+\infty)$. We stress that it is not necessary that $K(0)>0$. This is important because a profile satisfying \eqref{eq:assumpt01} or \eqref{eq:assumpt02} will necessarily have $K(0)=0$.

It is not difficult to construct profiles in the class $\mc{K}^+$. In essence, all that is required is that if the value of $U_s$ increases after an inflexion point, then the second derivative $U_s''$ must be negative until the next inflexion point is reached if it exists, and vice versa. A simple example is the function $\sin y$ or $\cos y$. 

Notice that if $U_s\in \mathcal{K}^+$  then, while it can admit multiple inflexion points, the inflexion value $U_0$ must be unique. Indeed by definition the sign of $U_s''$ must change at any inflexion point $\bar{y}$, hence the sign of $U_s-U_0$ must also change at $\bar{y}$ if we want $K>0$.

Secondly, if $\bar{y}$ is an inflexion point, then, assuming that $U_s\in C^3(\mathbb{R}_+)$ we have
$$K(\bar{y})=-\lim_{y\to \bar{y}}\frac{U_s'''(y)}{U_s'(y)}>0,$$
which implies that $U_s'(\bar{y})\neq 0$. If the sign of this derivative is positive, then $U_s(y)-U_0>0$ in a right neigbhorhood of $\bar{y}$, and therefore $U_s''<0$. If, instead, $U_s'(\bar{y})<0$, then $U_s(y)-U_0<0$ and $U_s''>0$ in a right neighborhood of $\bar{y}$. These signs cannot change until the next inflection point (if it exists at all).

The following result is a restatement of Theorem 1.2 from \cite{zlin} for shear profiles defined on the half-line. It is similar to Theorem 1.5(i) in \cite{zlin}, which covers the case of the full line. This version of the statement can be retrieved in \cite{paddick}, Theorem 4.2, however here we have explicitly removed the requirement that $K(0)>0$. The proof presented below focuses on clarifying why this requirement can be removed.
\begin{theorem}\label{zlin}Let $U_s\in C^2(\mathbb{R}_+)$, $U(y)\to U_{\infty}\in \mathbb{R}$ as $y\to +\infty$, and assume $U(y)$ takes the value $U_{\infty}$ at most a finite amount of times.   Suppose that $U_s\in \mc{K}^+$ and $\lim_{y\to +\infty}K(y)=0$. If the operator $-\partial_{yy}-K$ on $H_0^1(\mathbb{R}_+)\cap H^2(\mathbb{R}_+)$ has a strictly negative eigenvalue, then $U_s$ is linearly unstable for the Euler equation. 
\end{theorem}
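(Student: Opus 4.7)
The plan is to construct an unstable mode of the linearized Euler equation of the form $e^{ik(x-ct)}\psi(y)$ with $k>0$ and $\operatorname{Im}c>0$, which amounts to finding a nontrivial $\psi\in H_0^1(\mathbb{R}_+)\cap H^2(\mathbb{R}_+)$ solving the Rayleigh equation
\begin{equation*}
(U_s-c)(\psi''-k^2\psi)=U_s''\psi.
\end{equation*}
Using the identity $U_s''=-K(U_s-U_0)$, this rearranges to $-\psi''+k^2\psi = K(y)(U_s-U_0)/(U_s-c)\,\psi$, which at the formal value $c=U_0$ collapses to the self-adjoint eigenvalue problem $(-\partial_{yy}-K)\psi=-k^2\psi$. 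Thus the hypothesis that $-\partial_{yy}-K$ admits a strictly negative eigenvalue $\lambda_1<0$ on $H_0^1\cap H^2$ immediately furnishes a neutral Rayleigh mode at wavenumber $k_*=\sqrt{-\lambda_1}$.

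The next step is a bifurcation argument in the spirit of Z.~Lin. Set $L_k:=-\partial_{yy}-K+k^2$; since $K$ is bounded and tends to zero at infinity, the essential spectrum is $[k^2,\infty)$ and the bottom eigenvalue $\mu(k)$ is continuous and strictly increasing in $k^2$, negative at $k=k_*$, and tends to $+\infty$ as $k\to\infty$. Pick $k_0>0$ with $\mu(k_0)=0$ and let $\phi_0\in H_0^1\cap H^2$ be the associated ground state. Then consider the non-self-adjoint family
\begin{equation*}
A(k,c)\,\psi := -\psi''+k^2\psi - K(y)\,\frac{U_s-U_0}{U_s-c}\,\psi,\qquad \operatorname{Im}c>0,
\end{equation*}
which is a compact perturbation of $-\partial_{yy}+k^2$. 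The eigenvalue branch $\mu(k,c)$ extending $\mu(k_0)=0$ is analytic in $c$ for $\operatorname{Im}c>0$ and continuous up to the real axis by a Sokhotski--Plemelj-type limit. An implicit function argument near $(k_0,U_0)$, using that $\partial_k\mu(k,U_0)>0$ and that the imaginary part of $\mu$ is non-trivial in the upper half plane, then produces some $(k,c)$ with $k<k_0$, $\operatorname{Im}c>0$ and $\mu(k,c)=0$. This gives the desired unstable Rayleigh mode, from which the exponentially growing solution to the linearized Euler equations witnessing Definition \ref{defunst} is standard.

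The main obstacle, and precisely the point that justifies removing the assumption $K(0)>0$ from Paddick's earlier version of the statement, lies in the Sokhotski--Plemelj calculation. The jump in the imaginary part of $\mu(k,c)$ as $c\to U_0$ from the upper half plane is to leading order the residue
\begin{equation*}
\pi\,\frac{K(y_0)\,|\phi_0(y_0)|^2}{|U_s'(y_0)|},
\end{equation*}
where $y_0>0$ is the (unique, by $\mathcal{K}^+$) point with $U_s(y_0)=U_0$. Since $y_0$ lies strictly in the interior and $K(y_0)>0$, this residue is strictly positive \emph{irrespective} of the behaviour of $K$ at $y=0$; in particular it remains non-zero when $K(0)=0$, which is forced by the compatibility assumptions \eqref{eq:assumpt01}--\eqref{eq:assumpt02} imposed later on $U_s$. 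The auxiliary fact $\phi_0(y_0)\neq 0$ follows from the ground-state property of $\phi_0$ for a Schr\"odinger-type operator on $\mathbb{R}_+$ with Dirichlet condition, which guarantees $\phi_0$ is of one sign on $(0,\infty)$. Making this residue computation rigorous on the half-line in the absence of strict positivity of $K$ up to the boundary is the heart of the proof and the place where the restatement of Lin's theorem needs its own argument.
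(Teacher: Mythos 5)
Your proposal takes a genuinely different route from the paper, and the step that carries all the weight --- the Sokhotski--Plemelj jump --- is computed incorrectly, so as written the argument does not close.

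First, for comparison: the paper does not re-run Lin's bifurcation analysis on the half-line at all. It exhausts $\mathbb{R}_+$ by the compact intervals $I_n=[n^{-1},n]$, on which $K$ is continuous and strictly positive (hence bounded below by a positive constant), so that Lin's Theorem 1.2 applies verbatim and yields unstable Rayleigh modes $(\phi_n,c_n)$ with $\operatorname{Im}c_n>0$ on each $I_n$; it then passes to the limit $n\to\infty$ using $H^2$ bounds on the normalized $\phi_n$, Howard's semicircle theorem to control $c_n$, and a contradiction argument to exclude a real limiting phase speed. This is precisely how the hypothesis $K(0)>0$ is removed: the endpoint is cut away before Lin's interior result is invoked, rather than by redoing the residue analysis near $y=0$.

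The gap in your version: you claim that the imaginary part of the eigenvalue branch jumps, as $c\to U_0$ from the upper half plane, by the residue $\pi K(y_0)|\phi_0(y_0)|^2/|U_s'(y_0)|$ at the critical point $y_0$ with $U_s(y_0)=U_0$. But the coefficient in your operator $A(k,c)$ is $K(y)(U_s-U_0)/(U_s-c)$, whose numerator vanishes at exactly the point where the denominator becomes singular when $c=U_0$. Writing $c=U_0+i\eps$, the imaginary part of this coefficient is $K(y)\,\eps\,(U_s-U_0)/\bigl((U_s-U_0)^2+\eps^2\bigr)$, which is uniformly bounded and tends to $0$ pointwise; equivalently, the Plemelj delta contribution carries a factor $(c_s-U_0)$, which is zero at $c_s=U_0$. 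So the jump you rely on vanishes identically and the proposed mechanism for pushing $c$ off the real axis produces nothing. In Lin's actual argument the interior positivity of $K$ is used differently: it fixes the sign of $\operatorname{Im}\int K\frac{U_s-U_0}{U_s-c}|\phi|^2$ as $c$ approaches real values $c_s\neq U_0$, which rules out neutral limiting modes other than $c=U_0$ and allows a continuation argument in the wavenumber, starting from the neutral mode at $k_*=\sqrt{-\lambda_1}$, to remain in the open upper half plane. (A smaller slip: $\mu(k_*)=\lambda_1+k_*^2=0$, not negative; the bottom eigenvalue of $L_k$ is negative only for $k<k_*$.) If you want to run the bifurcation directly on the half-line you must reproduce that continuation argument, including the exclusion of neutral limiting modes and the control of the essential spectrum at the bifurcation point; the paper's exhaustion-by-compact-intervals route avoids all of this by quoting Lin's bounded-interval theorem as a black box.
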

\begin{proof}The proof is the same as Theorem 1.5(i) in \cite{zlin},  except that for each $n\in \mathbb{Z}_{\geq 0}$ we consider the interval $I_n= [n^{-1},n]$, on which $K$ is strictly positive, taking $n$ large enough so that $I_n$ contains an inflexion point of $U_s$. 
\end{proof}
\begin{comment}Let $k<0$ be such that $k$ is not an eigenvalue of $-\partial_{yy}-K$ on $H^2\cap H_0^1(\mathbb{R}_+)$. We can apply Theorem 1.2 from \cite{zlin} on $I_n$. We thus obtain a family of solutions $\phi_n\in H^2\cap H_0^1(I_n)$ to Rayleigh's equation, with phase modes (eigenvalues) $c_n$, $\Im c_n >0$, and wave mode $k$. Since the equation is linear we can assume $\|\phi_n\|_{L^2}=1$.  Because the $\phi_n$ solve the Rayleigh's equation, this implies that $\lll \phi_n\rrr $ is also bounded in $H^2$. Thus $\phi_n$ converges weakly in $H^2$ to a function $\phi_0$. Since $K(0)=\lim_{y\to +\infty}K(y)=0$, we can pick $n$ large enough so that $K$ is small with respect to the wave mode $k$. As a result, $\phi_0\neq 0$. As for the phase modes $\lll c_n\rrr$, we know by Howard's semicircle theorem that the sequence is bounded, so up to a subsequence we can assume $c_n\to c_0$.  Since $\Im c_n<0$, then either $\Im c_0=0$ or $\Im c_0 <0$. In the second case we have found a non trivial solution to the Rayleigh's equation, which proves that $U_s$ is linearly unstable. The first case leads to a contradiction: $c_0$  is then a real number, and must belong to the range of $U_s$ by Howard's theorem. In that case, $k$ is an eigenvalue of $-\partial_{yy}-K^2$, a contradiction with the initial assumption that $k$ is not an eigenvalue.
\end{comment}
The next result is a generalization of the argument used in \cite{paddick} to prove that the profiles
$$u_{\delta}(y)=\arctan (y-\delta)+ \zeta $$
for $\delta,\zeta \in \mathbb{R}$ satisfy the assumptions of \autoref{zlin}. Of course, the above family of profiles in unsuitable to us, as they do not satisfy \eqref{eq:assumpt01} or \eqref{eq:assumpt02}. 
\begin{prop}\label{exx}
Suppose that $U_s\in \mathcal{K}^+$ and $U_s$ satisfies assumption \eqref{eq:assumpt01} or \eqref{eq:assumpt02}. Let $U_{\infty}:=\lim_{y\to+\infty}U_s$ and suppose that $U_s$ takes the value $U_{\infty}$ at most a finite amount of times.
Then $U_s$ satisfies all the assumptions of \autoref{zlin}, and is therefore linearly unstable for the Euler equation.
\end{prop}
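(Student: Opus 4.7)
The plan is to verify the two non-trivial assumptions of \autoref{zlin}: the limit $\lim_{y\to+\infty}K(y)=0$ and the existence of a strictly negative eigenvalue of $-\partial_{yy}-K$ on $H_0^1(\mathbb{R}_+)\cap H^2(\mathbb{R}_+)$. For the first, I would begin by recording that \eqref{eq:assumpt01} or \eqref{eq:assumpt02} implies $U_s''\in L^1(\mathbb{R}_+)$ with $U_s'(y),U_s''(y)\to 0$ at infinity. I would then argue that $U_0\neq 0$: since all derivatives of $U_s$ vanish at $y=0$ the profile is flat there, and if $U_0=0$ then $K=-U_s''/U_s$ blows up as $y\to 0^+$ for any non-trivial flat $U_s$, contradicting the boundedness of $K$ required by $\mathcal{K}^+$. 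I would next show $U_\infty\neq U_0$ by contradiction: equality would force only finitely many inflexion points (each gives $U_s=U_0=U_\infty$), so past the last one $U_s''$ has fixed sign; together with the $\mathcal{K}^+$ rule that $U_s-U_0$ and $U_s''$ have opposite signs, this makes $U_s'$ monotone towards $0$ with a sign incompatible with $U_s-U_0$ being non-zero and converging to $0$. Combining $U_s''\to 0$ with $|U_s-U_0|\to|U_\infty-U_0|>0$ then yields $K(y)\to 0$.

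For the eigenvalue, I will exploit the crucial observation that $f:=U_s-U_0$ is an exact solution of $-f''-Kf=0$, with $f(0)=-U_0\neq 0$ and $f'(0)=0$. Let $f_2$ denote the linearly independent solution determined by $f_2(0)=0$, $f_2'(0)=1$; its Wronskian with $f$ is the non-zero constant $W=-U_0$. The main step is to prove that $f_2$ has a zero in $(0,+\infty)$. If $U_s$ has at least two inflexion points I will apply Sturm separation to $f$ and $f_2$ between consecutive zeros of $f$. When there is exactly one inflexion point, the single sign change of $U_s''$ together with $U_s'(0)=0=\lim_{y\to\infty}U_s'$ forces $U_s$ to be monotone on $\mathbb{R}_+$, so $U_0$ and $c_\infty:=U_\infty-U_0$ share the same sign; the exact identity $(f_2/f)'=-U_0/f^2$ together with $1/f^2\to 1/c_\infty^2$ then yields $f_2(y)\sim (-U_0/c_\infty)\,y$ at infinity with negative slope, and the intermediate value theorem applied to $f_2(0)=0$, $f_2'(0)=1>0$ forces a zero of $f_2$ in $(0,+\infty)$.

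Once a zero $y_1>0$ of $f_2$ is in hand, $f_2|_{(0,y_1)}$ is a positive Dirichlet eigenfunction of $-\partial_{yy}-K$ with eigenvalue $0$, hence the ground state on $(0,y_1)$. Strict domain monotonicity of Dirichlet eigenvalues then gives $\lambda_1((0,R))<0$ for every $R>y_1$, and since by Weyl's theorem the essential spectrum of $-\partial_{yy}-K$ on the half-line equals $[0,\infty)$ (because $K\to 0$), letting $R\to\infty$ produces the required strictly negative discrete eigenvalue. The main technical obstacle I anticipate is the single-inflexion-point case, where Sturm separation is unavailable and the argument must rely on the asymptotic linearity of $f_2$, which in turn depends on the integrability of $K$ near $\infty$ (following from $U_s''\in L^1$ and $|U_s-U_0|$ being bounded below there).
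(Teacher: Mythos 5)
Your proposal is correct, but it proves the negative-eigenvalue condition by a genuinely different route than the paper. The paper's proof is purely variational: it uses the equivalence between a negative eigenvalue and the quadratic form $Q(\phi)=\int(|\phi'|^2-K|\phi|^2)$ being negative somewhere, and builds the test function from the translated profile $U_{s,\eta}(y)=U_s(y+y_0-\eta)-U_0$ cut off at infinity and extended by zero below $\eta$; an integration by parts gives $Q(y_0)=0$ and $Q'(y_0)=(U_s'(y_0))^2>0$, so $Q(\eta_0)<0$ for some $\eta_0<y_0$ in a single stroke, with no case distinction. You instead exploit the same structural fact --- that $f=U_s-U_0$ is a zero-energy solution of $-f''-Kf=0$ --- through oscillation theory: the Wronskian with the Dirichlet solution $f_2$, Sturm separation when there are at least two inflexion points, and the asymptotic $f_2(y)\sim -(U_0/c_\infty)y$ from $(f_2/f)'=W/f^2$ when there is exactly one, followed by domain monotonicity and the essential-spectrum argument (which, as you note, is exactly the variational criterion the paper invokes, with the extended Dirichlet eigenfunction as test function). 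Your sign analysis in the single-inflexion case checks out ($U_s$ monotone, $U_0$ and $c_\infty=U_\infty-U_0$ of the same sign, hence negative slope at infinity versus $f_2'(0)=1>0$), and your argument that $U_0\neq 0$ is essentially an ODE uniqueness statement ($U_0=0$ would make $f$ vanish to second order at $0$ and hence vanish identically, since $K$ is bounded). What your approach buys is an explicit verification of $\lim_{y\to+\infty}K(y)=0$ and of $U_\infty\neq U_0$, which the paper leaves implicit; what it costs is the case split on the number of inflexion points and the extra spectral-theoretic step, where the paper's translation-perturbation argument is shorter and uniform. Two small points to tighten: take $y_1$ to be the \emph{first} positive zero of $f_2$ so that positivity on $(0,y_1)$ is guaranteed, and note that $U_s''\to 0$ at infinity needs $U_s'''\in L^1$ (available from the assumptions), not merely $U_s''\in L^1$.
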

\begin{proof} By \eqref{eq:assumpt01} or \eqref{eq:assumpt02}, we know that all the derivatives of $U_s$ are integrable over $\mathbb{R}_+$ and vanish at infinity. 

The operator $-\partial_{yy}-K^2$ having a strictly negative eigenvalue is equivalent to the quadratic form 
 $$Q(\phi):=\int_{\mathbb{R}_+}\l |\phi'|^2 - K|\phi|^2\r,\qquad \phi \in H_0^1(\mathbb{R}_+),$$
 taking a negative value for some function $\phi$. We will construct such a function from the profile $U_s$.
 \begin{enumerate}
     \item First of all, define for all $\eta>0$
     $$U_{s,\eta}(y):= U_s(y+y_0-\eta)-U_0,\qquad y \in [\eta-y_0,+\infty). $$
     This implies that $U_{s,\eta}(y)$ has an inflexion point at $y=\eta$, and the inflexion value is $U_{s,\eta}(\eta)=0.$
     \item Next, define the functions
     $$w_{\eta}^n:=\begin{cases}
     0 & y \leq \eta \\ 
     U_{s,\eta}\chi (y/n) & y \geq \eta,
     \end{cases} $$
     where $\chi $ is a smooth cut-off function supported in $[0,2]$, with $\chi=1$ in $[0,1]$. Then $w_{\eta}^n\in H_0^1(\mathbb{R}_+) $. Since $K\in L^{\infty}(\mathbb{R}_+)$, and by \eqref{eq:assumpt01} or \eqref{eq:assumpt02} all the derivatives of $U_s$ are integrable over $\mathbb{R}_+$ and vanish at infinity, we have
     $$\lim_{n\to +\infty}Q(w_{\eta}^n)=\int_{\eta}^{+\infty}\l |U_{s,\eta}'|^2- K|U_{s,\eta}|^2 \r =:Q(\eta). $$
     Let us show that $Q(y_0)=0$. 
     \begin{align*}
         Q(y_0)&=\int_{y_0}^{\infty}\l |U_s'(y)|^2 + \frac{U_s''(y)}{U_s-U_0}|U_s(y)-U_0|^2 \r\,\mathrm{d}y\\ 
         &= \int_{y_0}^{\infty}\l |U_s'(y)|^2 + {U_s''(y)}(U_s(y)-U_0)\r\,\mathrm{d}y\\ 
         &= -U_0\int_{y_0}^{\infty}U_s''+ \ll U_sU_s'\rr_{y_0}^{\infty}\\
         &=(-U_0+U_{\infty})\lim_{y\to +\infty}U_s'(y)=0.
     \end{align*}
     If we compute the derivative of $Q$ as a real variable function, we obtain that for any $\eta>0$,
     $$Q'(\eta)= \int_{\eta}^{\infty}\l -2 U_{s,\eta}''U_{s,\eta}+ 2K U'_{s,\eta}U_{s,\eta}\r -|U_{s,\eta}'(\eta)|^2.$$
     In particular,
     $$Q'(y_0)=-4\int_{y_0}^{\infty}U_s'U_s''- |U_s'(y_0)|^2= -2 [(U_s')^2]_{y_0}^{\infty}-(U_s'(y_0))^2 =(U_s'(y_0))^2>0.$$
     
     \item We know that $Q(y_0)=0$ and $Q'(y_0)>0$. Therefore, for some $\eta_0 \in (0,y_0)$ we must have $Q(\eta_0)<0$. Hence, for some $n\in \mathbb{Z}_{\geq 0}$ we have $Q(w_{\eta_0}^n)<0$. This concludes the proof. 
 \end{enumerate}

\end{proof}
\begin{example}\label{gevrey}An explicit example of a flow satisfying the assumptions of Proposition \ref{exx} is given by
$$U_s(y)=e^{-y^{-\frac{1}{\rho-1}}},\qquad \rho>1. $$
 Indeed, its first three derivatives are
\begin{align*}
    U_s'(y)&=\frac{1}{\rho-1} y^{-\frac{\rho}{\rho-1}}e^{-y^{-\frac{1}{\rho-1}}}, \\
    \quad U_s''(y)&= \frac{1-\rho y^{\frac{1}{\rho-1}}  }{(\rho-1)^2y^{\frac{2\rho}{\rho-1}}}e^{-y^{-\frac{1}{\rho-1}}},\\
    \quad U_s'''(y)& = \frac{1+2\rho^2y^{\frac{2}{\rho-1}}-\rho y^{\frac{1}{\rho-1}}\l 3+ y^{\frac{1}{\rho-1}} \r}{(\rho-1)^3y^{\frac{3\rho}{\rho-1}}}e^{-y^{-\frac{1}{\rho-1}}},
\end{align*}
so that $U_s'>0$, there is a unique inflexion point at $y=y_0:=\rho^{1-\rho}$, with $U_s''>0$ for $y\in \l 0,y_0\r$ and $U_s''<0$ for $y>y_0$, and $U_s'''\l y_0\r = -\dfrac{\rho^{3\rho-1}}{e^{\rho}(\rho-1)^3}<0$. This implies that $U_s\in \mathcal{K}^+$. Indeed $K(y)>0$ for $y\neq 0,y_0$. Moreover, by De L'Hopital's rule we have
$$ \lim_{y\to y_0}K(y)=-\lim_{y\to y_0}\frac{U_s^{(3)}(y)}{U_s'(y)}=-\frac{U_s^{(3)}(y_0)}{U_s'(y_0)}>0.$$
 By \eqref{eq:assumpt}, we have $\lim_{y\to +\infty}U_s(y)=U_{\infty}\in \mathbb{R}$, and $\lim_{y\to +\infty}K(y)=0$. Since $U_s'>0$, it never actually takes the value $U_{\infty}$. One can argue by induction that all the derivatives are bounded, integrable and vanish at $y=0$. Thus, this is a linearly unstable shear flow satisfying assumption \eqref{eq:assumpt01}.
\end{example}

\begin{remark} The above flow belongs to the Gevrey class $G^{\rho}(\mathbb{R}_+)$.
\end{remark}
The flow from the previous example does not satisfy assumption \eqref{eq:assumpt02}, as $U_{\infty}\neq 0$. To construct a linearly unstable flow with $U_s(0)=U_{\infty}=0$, we need at least two inflexion points $y_1,y_2$, with $U_s(y_1)=U_s(y_2)=U_0$. One can then easily construct a smooth profile $U_s$ with $U_s>0$ on $(0,\infty)$ satisfying all the requirements by requiring the following:
\begin{enumerate}
    \item $U_s'>0,U_s''>0$ on $(0,y_1)$;
    \item $U_s''<0$ on $(y_1,y_2)$, and $U_s'$ changes its sign somewhere in $(y_1,y_2)$,
    \item $U_s'<0,U_s''>0$ on $(y_2,+\infty)$.
\end{enumerate}

\begin{figure}[t]
\includegraphics[scale=0.5]{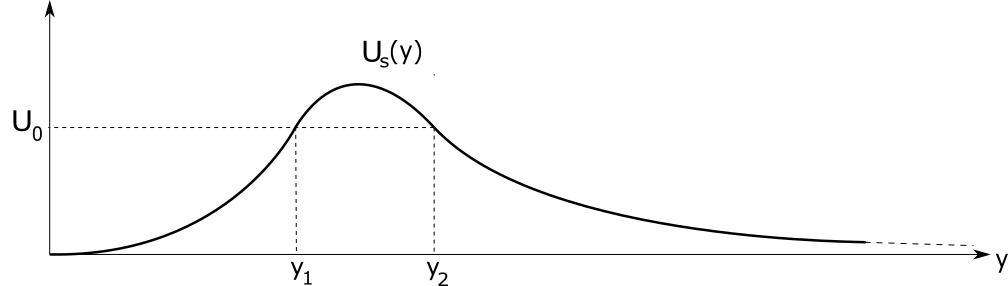}
\caption{A linearly unstable shear profile satisfying assumption \eqref{eq:assumpt02}.}
    \label{fig:profile}
\end{figure}
\newpage 
\section{Grenier's instability with a viscosity-dependent Navier boundary condition}\label{main0}
Let $\gamma \in \mathbb{R}$. Let $U_s:\mathbb{R}_+\to \mathbb{R}$ be a smooth shear flow, linearly unstable for the Euler equation in the sense of Definition \ref{defunst}.
 Consider the following Navier-Stokes equations with the Navier boundary condition:
 \begin{equation}\label{eq:nsnavier}\begin{cases}\partial_t\mbf{u}^{\nu}+\mbf{u}^{\nu}\cdot \nabla \mbf{u}^{\nu}+\nabla p^{\nu}=\nu \Delta \mbf{u}^{\nu};\\
\nabla \cdot \mbf{u}^{\nu}=0;\\
\partial_y u^{\nu} = \nu^{-\gamma}u^{\nu}& \text{ at }y=0;\\
v^{\nu}=0& \text{ at }y=0.\end{cases}
 \end{equation}
 
 Let $\tilde{y}=y/\sqrt{\nu}$, and let $u_s^{\nu}=u_s^{\nu}(t,\tilde{y})$ be the solution to the heat equation
\begin{equation}\label{eq:heatrobin}
\begin{cases}\partial_t u_s^{\nu}(t,\tilde{y})=\partial_{\tilde{y}\tilde{y}}u_s^{\nu}(t,\tilde{y}) & (t,\tilde{y})\in \mathbb{R}_+\times \mathbb{R}_+;\\
\partial_Y u_s^{\nu}(t,0)=\nu^{1/2-\gamma} u_s^{\nu}(t,0) & t\in \mathbb{R}_+;\\
u_s^{\nu}(0,\tilde{y})= U_s(\tilde{y}) & \tilde{y}\in \mathbb{R}_+.
\end{cases}
\end{equation}
We will also use $u_s^{\nu}$ to denote the shear flow $(u_s^{\nu},0)$ which is therefore a solution to \eqref{eq:nsnavier} in the original variables $(t,x,y)$. 

If we take the limit as $\nu\to 0$ in \eqref{eq:heatrobin}, we expect convergence of $u_s^{\nu}$ to the solution of the Dirichlet or Neumann problem for the heat equation, respectively if $\gamma >1/2$ or $\gamma <1/2$. In order to establish our main result, we will need the convergence results Corollary \ref{conv} and Corollary \ref{convneu} respectively. For those to hold, we must require the following assumption on the profile $U_s$, depending on the sign of $\gamma-1/2$:
\begin{equation}\label{eq:assumpt} \lim_{y\to +\infty}U_s(y)=U_{\infty}\in \mathbb{R},\qquad  U_s^{(k)}(0)=0\;\forall k\in \mathbb{Z}_{\geq 0}\qquad \textup{ and }\qquad \begin{cases}U_s^{(k)}\in L^1(\mathbb{R}_+)\;\forall k\geq 1 & \text{ if }\gamma > 1/2; \\ 
U_s^{(k)}\in L^1(\mathbb{R}_+)\;\forall k \geq 0 & \text{ if }\gamma < 1/2.\end{cases}
\end{equation}
Notice that under this assumption, since all the derivatives of $U_s$ vanish at the origin, $U_s$ satisfies the compatibility conditions of \eqref{eq:heatrobin} for all orders, and for all $\nu>0$. Thus $u_s^{\nu}$ is smooth up to the boundary.

In Section \ref{unstableflows} we confirmed the existence of profiles satisfying \eqref{eq:assumpt} which are linearly unstable for the Euler equation. These profiles cannot be analytic, but they can be found in the Gevrey classes $G^{\rho}$ for any $\rho>1$ (see Example \ref{gevrey}). When $\rho\leq 2$, for these flows the Prandtl equation is well-posed (see \cite{dgv}). Hence, by the next result, instability of the boundary layer expansion for the Navier boundary condition can occur even when the Prandtl equation is well-posed, in line with the no-slip case.

Finally, we remark that as the case $\gamma=1/2$ has already been treated in \cite{paddick}, throughout this paper we will focus on the case $\gamma\neq 1/2$.

We are now ready to state the main result of this paper.
\begin{theorem}\label{main} For $\nu >0$, let $u_s^{\nu}=(u_s^{\nu},0)\in C^{\infty}(\mathbb{R}_+)$ be a family of shear flows defined by \eqref{eq:heatrobin}. Then for any $N\in \mathbb{Z}_{\geq 1}$ there exists a family of solutions $\mbf{u}^{\nu}=(u^{\nu},v^{\nu})$ to \eqref{eq:nsnavier},
constants $C,\delta>0$ and times $\tilde{T}^{\nu}\searrow 0$ such that for all $\nu>0$,
\begin{align}\label{eq:main0}\|\mathbf{u}^{\nu}(0,x,y)-u_s^{\nu}(0,y/\sqrt{\nu})\|_{L^{\infty}} &\leq C\nu^N\\  \label{eq:main1}
\|\mbf{u}^{\nu}(\tilde{T}^{\nu},x,y)-u_s^{\nu}(\tilde{T}^{\nu},y/\sqrt{\nu})\|_{L^{\infty}}&\geq \delta\nu^{\theta},
\end{align}
where $\theta$ is a continuous and increasing function of $\gamma$ given by
\begin{equation}\label{eq:thetadef}\theta:=\begin{cases} \frac{1}{4} & \gamma \geq \frac{3}{4};\\ 
\gamma - \frac{1}{2} & \frac{1}{2}<\gamma < \frac{3}{4};\\
0 & \gamma \leq \frac{1}{2}.
\end{cases}\end{equation}
Moreover, for all $s>2\theta +1$, we have
\begin{equation}\label{eq:main2}\|\mbf{u}^{\nu}(\tilde{T}^{\nu},x,y)-u_s^{\nu}(\tilde{T}^{\nu},y/\sqrt{\nu})\|_{\dot{H}^s}\to +\infty.\end{equation}
\end{theorem}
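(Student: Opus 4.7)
The proof follows Grenier's nonlinear instability framework \cite{grenier}, as extended to the Navier setting by Paddick \cite{paddick}. The plan has three ingredients: (i) produce an exponentially growing solution of the linearization of \eqref{eq:nsnavier} around $u_s^\nu$; (ii) upgrade this linear mode to an approximate Navier--Stokes solution via a multi-scale asymptotic expansion whose amplitude is tuned by $\gamma$; (iii) close an energy estimate on the nonlinear remainder so that the leading mode actually reaches $L^\infty$-size $\nu^\theta$ before the remainder catches up.

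\textbf{Step 1 (linear growing mode).} Introduce boundary-layer coordinates $(T,X,Y):=(t/\sqrt{\nu},x/\sqrt{\nu},y/\sqrt{\nu})$. In these variables \eqref{eq:nsnavier} becomes a rescaled Navier--Stokes system with viscosity $\sqrt{\nu}$ and Navier trace $\partial_Y u = \nu^{1/2-\gamma}u$, while $u_s^\nu$ is governed by \eqref{eq:heatrobin}. Because $U_s$ fulfils the hypotheses of Proposition \ref{exx}, the Euler operator linearized around $U_s$ admits a growing Rayleigh mode $\mbf{V}^{\mathrm{inv}}(T,X,Y)=e^{i\alpha X}\phi(Y)e^{\lambda T}$ with $\Re\lambda>0$. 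Using Corollary \ref{conv} for $\gamma>1/2$ or Corollary \ref{convneu} for $\gamma<1/2$, $u_s^\nu$ converges to the Dirichlet, respectively Neumann, heat solution in every Sobolev norm with rate $\nu$ or $\nu^{1/2}$. A Duhamel perturbation then promotes $\mbf{V}^{\mathrm{inv}}$ to an exact growing mode of the linearization of \eqref{eq:nsnavier} around $u_s^\nu$, together with a viscous sub-corrector absorbing the Navier trace mismatch. The sub-corrector has amplitude $\nu^{\max\{1/2-\gamma,0\}}$, which is ultimately responsible for the formula \eqref{eq:thetadef} through the isotropic rescaling $\gamma\mapsto 2\gamma-3/2$ alluded to in the introduction.

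\textbf{Step 2 (approximate solution).} Build the expansion
\begin{equation*}
\mbf{u}_{\mathrm{app}}^{\nu} := u_s^\nu(t, y/\sqrt{\nu}) + \sum_{j=0}^{M}\nu^{N+jN_0}\,\mbf{V}^{j}(T,X,Y),
\end{equation*}
where $\mbf{V}^0$ is the mode from Step 1 and each $\mbf{V}^j$ solves the linearization at $u_s^\nu$ with a source fed by the quadratic interactions of the previous orders, inheriting growth $e^{(j+1)\Re\lambda T}$. Choosing $M$ large (depending on $N$), the residual $R^\nu$ obtained by plugging $\mbf{u}_{\mathrm{app}}^\nu$ into \eqref{eq:nsnavier} is of arbitrarily high order in $\nu$ in every Sobolev norm on the relevant time interval. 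Setting $\mbf{u}^\nu(0):=\mbf{u}_{\mathrm{app}}^\nu(0)$ gives \eqref{eq:main0} directly.

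\textbf{Step 3 (remainder and conclusion, main obstacle).} Write $\mbf{u}^\nu:=\mbf{u}_{\mathrm{app}}^\nu+w^\nu$, so that $w^\nu$ satisfies a perturbed Navier--Stokes equation with source $R^\nu$. A high-order energy estimate, using the uniform-in-$\nu$ bounds on $u_s^\nu$ supplied by Proposition \ref{heatestimate} and Proposition \ref{pointwise} and handling the Navier trace $\nu^{-\gamma}u$ by the same trace inequalities used in Proposition \ref{heatestimate}, yields
\begin{equation*}
\tfrac{d}{dt}\|w^\nu\|_{H^s}^2 \leq \tfrac{C}{\sqrt{\nu}}\|w^\nu\|_{H^s}^2 + C\nu^{-\sigma}\|w^\nu\|_{H^s}^3 + C\|R^\nu\|_{H^s}^2,
\end{equation*}
where the $1/\sqrt{\nu}$ factor reflects the exponential growth rate driven by $\mbf{u}_{\mathrm{app}}^\nu$. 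Applying Lemma \ref{gronwall} together with a standard bootstrap shows that $w^\nu$ remains negligible compared with $\nu^N\mbf{V}^0$ up to the time $\tilde T^\nu:=(\Re\lambda)^{-1}(N-\theta)\sqrt{\nu}\log(1/\nu)$, at which the leading mode reaches $L^\infty$-size $\gtrsim\nu^\theta$; propagating enough regularity past the Navier boundary term to close this bootstrap is the main technical obstacle. This yields \eqref{eq:main1}. Finally, \eqref{eq:main2} follows from the observation that $\mbf{V}^0$ is concentrated at boundary-layer scale $\sqrt{\nu}$ in both $x$ and $y$, so each spatial derivative contributes a factor $\nu^{-1/2}$ in $L^2$; a direct rescaling computation then gives $\|\mbf{V}^0(\tilde T^\nu)\|_{\dot H^s}\gtrsim \nu^{\theta+1/2-s/2}$, which diverges for all $s>2\theta+1$, while $w^\nu$ stays controlled in $\dot H^s$ by the same Gronwall estimate.
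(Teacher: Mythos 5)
Your overall architecture (isotropic $\sqrt{\nu}$-rescaling, an approximate solution seeded by the unstable Rayleigh mode, an energy estimate on the remainder closed by a Gronwall argument up to a time of order $\sqrt{\nu}\log(1/\nu)$) is the Grenier--Paddick framework that the paper follows, and your $\dot H^s$ scaling computation for \eqref{eq:main2} is correct. However, there are several concrete gaps.

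First, the paper does not construct an exact growing mode of the linearized Navier--Stokes operator around $u_s^\nu$ by Duhamel; producing such an eigenmode in the presence of a viscous boundary layer is a hard problem in its own right (it is the Orr--Sommerfeld analysis of Grenier--Nguyen). Instead the Euler mode is used as the leading \emph{interior} term and the Navier condition is repaired order by order with a second, finer boundary layer: after rescaling, the corrector is $\nu^a\mbf{u}^b(t,x,y/\nu^{1/4})$ with $a=1/4-\theta$, and each $u^b_j$ solves a Dirichlet, Robin or Neumann heat problem in $Y=y/\nu^{1/4}$ according to whether $\gamma>3/4$, $\gamma=3/4$ or $\gamma<3/4$. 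Your single-scale expansion in Step 2 carries no such corrector, so $\mbf{u}_{\mathrm{app}}^{\nu}$ cannot satisfy the Navier trace up to an acceptable error, and---more importantly---the exponent $\theta$ of \eqref{eq:thetadef} is never actually derived: it comes from requiring $\|\nabla\mbf{u}^{\textup{app}}\|_{L^{\infty}}$ to stay bounded uniformly in $\nu$, where the corrector's gradient costs $\nu^{a-1/4}$ while its size at the final time is $\nu^{\theta}$, forcing $\theta\geq 1/4-a$. The amplitude $\nu^{\max\{1/2-\gamma,0\}}$ you assign to the corrector is not the one that appears in the construction after the rescaling.

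Second, your Step 3 energy estimate is not the one that closes. The paper estimates $\mbf{v}=\mbf{u}^{\nu}-\mbf{u}^{\textup{app}}$ in $L^2$ only, where the cubic term is absent (the transport nonlinearity is skew-symmetric under the divergence-free condition), so no bootstrap is needed; the Gronwall coefficient is $\|\nabla\mbf{u}^{\textup{app}}\|_{L^{\infty}}+\beta$, which is $O(1)$ in the rescaled time variable---not $C/\sqrt{\nu}$---and it is beaten by taking $M$ large so that the residual behaves like $(\nu^N e^{\sigma_0 t})^{P}$ with $P\sigma_0$ exceeding that coefficient, which is exactly the hypothesis $\lambda<\alpha$ of Lemma \ref{gronwall}. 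An $H^s$ estimate with large $s$, as you propose, would in addition lose a factor $\nu^{-1/4}$ per $y$-derivative of the boundary corrector; this is precisely the ``main technical obstacle'' you flag but do not resolve, and the paper avoids it entirely. Relatedly, since the remainder is only controlled in $L^2$, the $L^{\infty}$ lower bound \eqref{eq:main1} is obtained by bounding $\|\mbf{u}_0^I\|_{L^2(\Omega_A)}$ from below on a bounded set $\Omega_A$ and using $\|\cdot\|_{L^{\infty}(\Omega_A)}\gtrsim|\Omega_A|^{-1/2}\|\cdot\|_{L^2(\Omega_A)}$; this passage is missing from your argument. Finally, the genuinely new difficulty of this theorem relative to Grenier and Paddick---that the shear flow family $u_s^{\nu}$ depends on $\nu$, so one needs the uniform bounds of Lemma \ref{usbound} and Lemma \ref{otherbound} on $(u_s^{\nu}-U_s)/\nu^{2^{-n}}$, which is where the hypothesis that all derivatives of $U_s$ vanish at $y=0$ enters---appears only implicitly in your citation of Corollaries \ref{conv} and \ref{convneu}.
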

Notice that $(t,x,y)\mapsto u_s^{\nu}(t,y/\sqrt{\nu})$ satisfies \eqref{eq:nsnavier}. This shows that the Navier-Stokes equations \eqref{eq:nsnavier} are unstable around the family of shear flows $u_s^{\nu}$.
When $\gamma\geq 1/2$, the above result also shows the instability of the boundary layer expansion
 \begin{equation}\label{eq:blexpnew}\mbf{u}^{\nu}(t,x,y)\sim  \mbf{u}^E(t,x,y)+ \mbf{u}^b\l t,x,\frac{y}{\sqrt{\nu}}\r\qquad \text{ as }\nu \to 0, \end{equation}
 with $\lim_{\tilde{y}\to +\infty}\mbf{u}^b(t,x,\tilde{y})=0$. Notice that such a boundary layer expansion is necessarily unique. Indeed, suppose there was another expansion for $\mbf{u}^{\nu}$ given by $\tilde{\mbf{u}}^E,\tilde{\mbf{u}}^b$, so that
 $$\mbf{u}^E(t,x,y)+ \mbf{u}^b\l t,x,\frac{y}{\sqrt{\nu}}\r\sim \tilde{\mbf{u}}^E(t,x,y)+\tilde{ \mbf{u}}^b\l t,x,\frac{y}{\sqrt{\nu}}\r. $$ 
 Taking the limit $\nu\to 0$ pointwise, we obtain $\mbf{u}^E=\tilde{\mbf{u}}^E$, and hence $\mbf{u}^b=\tilde{\mbf{u}}^b$. In this case, the inequality \eqref{eq:main0} tells us that at time $t=0$ the boundary layer expansion \eqref{eq:blexpnew} of $\mbf{u}^{\nu}$ and $u_s^{\nu}$ must coincide.  However,  \eqref{eq:main1} tells us that at time $\tilde{T}^{\nu}$ the boundary layer expansions diverge by at least $O(\nu^{\theta})$.

On the other hand, when $\gamma <1/2$, \autoref{main} does not give us any information about boundary layer expansions. Indeed, the expected boundary layer expansion would be 
$$\mbf{u}^{\nu}(t,x,y)\sim \mbf{u}^E(t,x,y)+\nu^{\min\lll 1/2-\gamma; 1/2\rrr}\mbf{u}^b\l t,x,\frac{y}{\sqrt{\nu}}\r. $$
However, the shear flows $u_s^{\nu}$ cannot satisfy the above formula at $t=0$, as $\left.u_s^{\nu}\right|_{t=0}=U_s(y/\sqrt{\nu})$, which appears with a coefficient of order one with respect to the viscosity.  This is also the reason why our result is not in contradiction with the result of boundary layer expansion validity by Iftimie and Sueur \cite{iftimie} when $\gamma=0$ (viscosity-independent slip length), or with the result of convergence of Navier-Stokes to Euler by Paddick \cite{paddick} which establishes convergence of order $\nu^{\frac{1-\gamma}{2}}$ for all $\gamma <1$. 
 
\subsection{General strategy} 
As the case $\gamma=1/2$ has already been treated in \cite{paddick}, we will focus our proof on the two cases $\gamma >1/2$ and $\gamma <1/2$. As in \cite{grenier} and \cite{paddick}, we start by applying the isotropic scaling 
$$(t,x,y)\mapsto \l \frac{t}{\sqrt{\nu}},\frac{x}{\sqrt{\nu}},\frac{y}{\sqrt{\nu}}\r. $$
From now on, we will work exclusively with the new variables, which we will still denote with $(t,x,y)$. After the scaling, \eqref{eq:nsnavier} is transformed into the following: 
\begin{equation}\label{eq:nsnavier2}\begin{cases}\partial_t\mbf{u}^{\nu}+\mbf{u}^{\nu}\cdot \nabla \mbf{u}^{\nu}+\nabla p^{\nu}=\sqrt{\nu} \Delta \mbf{u}^{\nu};\\
\nabla \cdot \mbf{u}^{\nu}=0;\\\partial_y u^{\nu} = {\nu}^{1/2-\gamma}u^{\nu}& \text{ at }y=0;\\
v^{\nu}=0& \text{ at }y=0.\end{cases}
 \end{equation}
Notice that the Navier-Stokes equations are preserved, except the new viscosity is $\sqrt{\nu}$, while in the boundary condition, $\gamma$ becomes $\gamma -1/2>0$ (or $2\gamma-1 $ with respect to $\sqrt{\nu}$). For instance, the exponent $\gamma=1/2$, considered by Paddick, is trasformed into $\gamma=0$. The new boundary layer expansion for $\mbf{u}^{\nu}$, as per \eqref{eq:expgen}, becomes
\begin{equation}\label{eq:blexpiso}
    \mbf{u}^{\nu}(t,x,y)\sim\mbf{u}^E(t,x,y)+ \nu^{a}\mbf{u}^b\l t,x,Y\r,
\end{equation}
where $Y:=y/\nu^{1/4}$, and $a$ is a non-negative number representing the amplitude of the boundary layer, defined as
 \begin{equation}\label{eq:adef}a:=\frac{1}{4}-\theta=\begin{cases}0 & \gamma \geq \frac{3}{4};\\ 
 \frac{3}{4}-\gamma & \frac{1}{2}<\gamma <\frac{3}{4};\\
 \frac{1}{4}& \gamma \leq \frac{1}{2};
 \end{cases}.\end{equation}
Thus the critical exponent becomes $\gamma=3/4$.  The Euler equation is invariated, as are $L^{\infty}$ norms, whereas spatial $L^2$ norms are increased by a factor of $\nu^{-1/2}$. The shear flows are written as $u_s=u_s\l \sqrt{\nu}t, y\r$ in the new coordinates, which means that their dependence from $\nu$ is now smooth.

 An \textbf{approximate solution} to \eqref{eq:nsnavier2} is a solution up to some error function $\mbf{R}^{\textup{app}}$, which can be made arbitrarily small. Taking inspiration from the boundary layer expansion \eqref{eq:blexpiso}, we will construct our approximate solution as 
\begin{equation}\label{eq:uapp}(u^{\textup{app}},v^{\textup{app}})(t,x,y)= (u_s^{\nu},0)(\sqrt{\nu}t,y)+ (u^I,v^I)(t,x,y) + \nu^a (u^b,\nu^{1/4}v^b)\l t,x,Y\r, \end{equation}
 where

In first order, $u_s^{\nu}(\sqrt{\nu}t)+\mbf{u}^I(t)$ will therefore satisfy the Euler equations, while $\mbf{u}^b=(u^b(t,x,Y),\nu^{1/4}v^b(t,x,Y))$ will satisfy a Stokes equation. The boundary conditions will be chosen appropriately so that $\mbf{u}^{\textup{app}}(t,x,y)$ will satisfy the Navier boundary condition up to a small error:
 $$\begin{cases}\partial_y u^{\textup{app}}=\nu^{1/2-\gamma}u^{\textup{app}}+r_1^{\textup{app}};\\
 v^{\textup{app}}=r_2^{\textup{app}};
 \end{cases}\qquad \text{ at }y=0. $$

A heuristic justification for the value of $a$ can be given as follows. Let $u^E(t):=u^I(t)+u_s^{\nu}(\sqrt{\nu}t)$. 
 Plugging the ansatz \eqref{eq:uapp} into \eqref{eq:nsnavier2}$_3$ and multipying by $\nu^{\gamma-1/2}$, we obtain 
 $$\nu^{\gamma-1/2}\partial_y u^E+ \nu^{\gamma+a-3/4}\partial_Y u^b-\nu^a u^b= u^E, $$
 up to a small error which can be assumed to be smaller than all the other terms. As $\nu \to 0$, for the above relation to hold, the left-hand side must be asymptotic to a constant. But this is only possible if at least one between $\gamma+a-3/4$ and $a$ is zero, and the other is greater or equal to zero, which leads to $a$ being defined as in \eqref{eq:adef}.

Fix $\delta>0$. We first construct an approximate solution to \eqref{eq:nsnavier2} $\mbf{u}^{\textup{app}}$, a bounded subset $\Omega_A\subset \mathbb{R}^2_+$ and times $T^{\nu}$ with $\tilde{T}^{\nu}:=\sqrt{\nu}T^{\nu}\to 0$ such that
\begin{align}
\label{eq:idk13}\|\left.\l \mbf{u}^{\textup{app}}-u_s^{\nu}\r\right|_{t=0}\|_{L^2}&\leq C \nu^N;\\ \label{eq:idk14} \|\mbf{u}^{\textup{app}}(T^{\nu})-u_s^{\nu}(\sqrt{\nu}T^{\nu})\|_{L^2(\Omega_A)}&\geq 2\delta\nu^{\theta}. \end{align}
Starting from $\mbf{u}^{\textup{app}}$, we then construct an exact solution $\mbf{u}^{\nu}$ to \eqref{eq:nsnavier2} such that
\begin{align}
\label{eq:idk11}\mbf{u}^{\nu}(0)&= \mbf{u}^{\textup{app}}(0);\\
 \label{eq:idk12}\|\left. \l \mbf{u}^{\nu}-\mbf{u}^{\textup{app}}\r\right|_{t=T^{\nu}}\|_{L^2}&\leq \delta \nu^{\theta};
 \end{align}
where $\theta$ is defined as in \eqref{eq:thetadef}. Once we have these, then 
\begin{align*}
    \|\mbf{u}^{\nu}(T^{\nu})-u_s^{\nu}(\sqrt{\nu}T^{\nu})\|_{L^{\infty}}&\geq \|\mbf{u}^{\nu}(T^{\nu})-u_s^{\nu}(\sqrt{\nu}T^{\nu})\|_{L^{\infty}(\Omega_A)}\\
    &\gtrsim \|\mbf{u}^{\textup{app}}(T^{\nu})-u_s^{\nu}(\sqrt{\nu}T^{\nu})\|_{L^2(\Omega_A)}-\|\mbf{u}^{\nu}(T^{\nu})-\mbf{u}^{\textup{app}}(T^{\nu})\|_{L^2}\\ 
    &\geq 2\delta\nu^{\theta}- \delta\nu^{\theta}= \delta\nu^{\theta}.
\end{align*}

The estimates at time $t=0$, \eqref{eq:idk11} and \eqref{eq:idk13}, will hold by construction.  The estimate \eqref{eq:idk12} will be deduced from energy estimates, as in Section \ref{energyestt}, and \eqref{eq:idk14} will follow by the construction of $\mbf{u}^{\textup{app}}$. Note that \eqref{eq:idk14} and \eqref{eq:idk12} do not imply the instability of \autoref{main} in the $L^2$ norms, as after scaling back to the original variables, we would lose a $\sqrt{\nu}$ factor. 

\subsection{Structure of the approximate solution}

We want to construct $\mbf{u}^{\nu}$ starting from an approximate solution $\mbf{u}^{\textup{app}}$. This is built according to \eqref{eq:uapp},
where $\mbf{u}^I=(u^I,v^I)$ is constructed so that $u_s^{\nu}+\mbf{u}^I$ satisfies the Navier-Stokes equations with an error $\mbf{R}^{\textup{app}}$ and the slip boundary condition, and $ \mbf{u}^b=(u^b,\nu^{1/4} v^b)$ corrects the boundary condition. Ultimately, $\mbf{u}^{\textup{app}}(t)-u_s^{\nu}(\sqrt{\nu}t)$ will satisfy the Navier-Stokes equations \eqref{eq:nsnavier2} with the Navier boundary condition  up to a small error $\mbf{r}^{\textup{app}}$.

The standard procedure for the construction is to expand the terms $\mbf{u}^I$ and $\mbf{u}^b$ as power sums with respect to the viscosity. Let $n\in \mathbb{Z}$, $n\geq 2$ be such that
\begin{equation}\label{eq:ndef}\begin{cases}
2^{-n}\leq \gamma- \frac{3}{4} &\text{ if } \gamma \geq \frac{3}{4};\\ 
2^{-n}\leq \gamma-\frac{1}{2} &\text{ if } \frac{1}{2}< \gamma < \frac{3}{4};\\
2^{-n}\leq \frac{1}{2}\l \frac{1}{2}-\gamma \r &\text{ if } \gamma < \frac{1}{2}.
\end{cases}\end{equation}
Note that in each case, $2^{-n}\leq \theta$.

Denote $\mbf{w}(t):=\mbf{u}^{\textup{app}}(t)-{u}_s^{\nu}(\sqrt{\nu}t)$. Constructing $u^{\textup{app}}$ is then equivalent to constructing $\mbf{w}$, which must satisfy the equations
\begin{equation}\label{eq:idk2}\begin{cases}\partial_t \mbf{w} + (U_s\cdot \nabla) \mbf{w} + ( \mbf{w}\cdot \nabla) U_s+( \mbf{w}\cdot \nabla)  \mbf{w}+ \nabla p = \sqrt{\nu}\Delta  \mbf{w}+ \nu^{2^{-n}}S \mbf{w};\\ 
\nabla \cdot  \mbf{w}=0;\\
\partial_y  \mbf{w}\cdot \tau = \nu^{1/2-\gamma} \mbf{w}\cdot \tau;\\ 
 \mbf{w}\cdot n =0.
\end{cases} \end{equation}
where
$$ S \mbf{w}(t):= \frac{U_s- u_s^{\nu}(\sqrt{\nu}t)}{\nu^{2^{-n}}}\cdot \nabla  \mbf{w}(t) +  \mbf{w}(t)\cdot \nabla \l \frac{U_s- u_s^{\nu}(\sqrt{\nu}t)}{\nu^{2^{-n}}} \r. $$
The reason behind this definition is that as we will prove in Lemma \ref{usbound}, $S \mbf{w} = O(1)$ as $\nu \to 0$. 

We are going to implement the ansatz \eqref{eq:uapp} for the construction of $\mbf{u}^{\textup{app}}$. The term $\mbf{u}^I$ will be constructed so that it solves \eqref{eq:idk2}, but without the Navier boundary condition \eqref{eq:idk2}$_3$. This way, \eqref{eq:idk2} reduces to a linearized Euler equation around $U_s$ in first order approximation. The term $\mbf{u}^b$ will be constructed so that $\mbf{u}^I+\nu^a \mbf{u}^b$ fully solves \eqref{eq:idk2}, correcting the boundary condition. Since $\mbf{u}^b$ is a function of $\l t, x, y/\nu^{1/4}\r$, it will satisfy a Stokes equation with a Dirichlet, Neumann or Robin boundary condition, depending on the value of $\gamma$. 

Of course, the approximate solution $\mbf{u}^{\textup{app}}$ we construct also needs to satisfy \eqref{eq:idk14}. In order to achieve this, in general, the function $\mbf{w}$ will not satisfy \eqref{eq:idk2} exactly but will leave a remainder $\mbf{R}^{\textup{app}}$, which needs to be small enough so that \eqref{eq:idk12} holds by energy estimates. 

Choose $M\in \mathbb{N}$, which may be arbitrarily high. Since we want $\left.(\mbf{u}^{\textup{app}}-u_s^{\nu})\right|_{t=0}=O(\nu^N)$ as $\nu\to 0$, we will use the following ansatz: 
\begin{equation}\label{eq:ansatz2}
\mbf{u}^I=\nu^N \sum_{j=0}^M \nu^{j2^{-n}}\mbf{u}_j^I;\qquad \mbf{u}^b = \nu^N \sum_{j=0}^M \nu^{j2^{-n}} \mbf{u}_j^b.\end{equation}
This, by the definition of $n$, ensures that the terms of order $\nu^{\gamma-1/2}$ and $\nu^{\gamma-3/4}$ can be moved to a higher order, so that they only appear in the remainder of the respective equations. Recall that $\mbf{u}^b$ is then multiplied by a factor ${\nu}^a$ where $a= \max \lll 3/4-\gamma;0 \rrr$. 

The construction of $\mbf{w}$ will start from $\mbf{u}^I_0$, which exhibits the required estimates, but does not solve the  required equation. However, from $\mbf{u}^I_0$ we can construct an approximate solution $\mbf{u}^I$ of  \eqref{eq:idk2} by adding lower order terms in $\nu$. On top of that we need a boundary term $\mbf{u}^b$, which corrects the boundary condition so that the Navier boundary condition is satisfied. Such a term is a function of $\l t,x,y/\nu^{1/4}\r$, therefore its $L^2$ norm scales as $\sim \nu^{1/8}$. As a result, \eqref{eq:idk14} will hold:
\begin{align*}&\| \mbf{u}^{\textup{app}}(T^{\nu})-u_s(\sqrt{\nu}T^{\nu})\|_{L^{2}(\Omega_A)}\geq \\ 
&\geq\nu^N\l  \|\mbf{u}_0^I\|_{L^2(A)}-\sum_{j=1}^M \nu^{j2^{-n}}\|\mbf{u}_j^I\|_{L^2}-\sum_{j=0}^M\nu^{j2^{-n}}\|\mbf{u}^b_j\|_{L^2(y)} \r\\ 
&\geq \nu^N\l \|\mbf{u}_0^I\|_{L^2(A)}-\nu^{2^{-n}}\sum_{j=1}^M \nu^{(j-1)2^{-n}}\|\mbf{u}_j^I\|_{L^2}-\nu^{1/8}\sum_{j=0}^M\nu^{j2^{-n}}\|\mbf{u}_j^b\|_{L^2(Y)}\r\\
&\gtrsim \frac{\nu^N}{2}\|\mbf{u}_0^I\|_{L^2(A)}\qquad \qquad\text{ as }\nu\to 0.
\end{align*}

We can now study the equations satisfied by $\mbf{u}_j^I$ and $\mbf{u}_j^b$. Each of the equations that follow should be paired with an initial condition $\mbf{u}_j^{*}(x,y)$, $^*=I,b$. Because we are constructing an approximate solution, we can choose the initial condition arbitrarily, as long as it satisfies the following conditions:
\begin{itemize}
    \item it is compatible with the boundary condition and remainder;
\item together with its derivatives, it is in $L^2$ and decays exponentially as $Y\to +\infty$: in other words,
\begin{equation}\label{eq:icbound}|\partial_x^{\ell}\partial_Y^k\mbf{u}_j^*(x,y)|\leq |g^*_{j,k,\ell}(x)|e^{-\lambda_j y}, \qquad ^*=I,b,\end{equation}
for some $\lambda>0, g^*_{j,k,\ell}\in L^2(\mathbb{R})$.
\end{itemize}
The equations satisfied by $\mbf{u}_j^I$ involve all the terms of order between $N+j2^{-n}$ and $N+(j+1)2^{-n}$, including the former but not the latter. We obtain the following inhomogeneous linearized Euler equations:
 \begin{equation}\label{eq:internal} \begin{cases}
 \partial_t \mbf{u}_j^I + U_s\cdot \nabla \mbf{u}_j^I + \mbf{u}_j^I \cdot \nabla U_s + \nabla p_j^I = \mbf{R}_j^I;\\ 
 \nabla \cdot \mbf{u}_j^I=0;\\
 v_j^I = -\nu^{a+1/4-2^{-n}}v_{j-1}^b;\\ 
 \end{cases}
 \end{equation}
 where
 \begin{equation}\label{eq:intremj}\mbf{R}_j^I = S\mbf{u}^I_{j-1}+ \Delta \mbf{u}^I_{j-2^{n-1}}+ \sum_{ j_1+j_2=j-2^nN}\mbf{u}^I_{j_1}\cdot \nabla \mbf{u}^I_{j_2}.\end{equation}
 Notice that $a+1/4-2^{-n}\geq a\geq 0$, and $0\leq \theta-2^{-n}<2^{-n}$. Thus $u_s+\mbf{u}^I$ satisfies the Navier-Stokes equations with the slip boundary condition \eqref{eq:internal}$_3$ and an error $\mbf{R}^I$ consisting of all the terms of order greater or equal to $N+(M+1)2^{-n}$. Therefore,
\begin{equation}\label{eq:intfinal}\mbf{R}^I =\nu^N \sum_{j\geq M+1}\nu^{j2^{-n}}\mbf{R}^I_j.\end{equation}
Notice that the above sum is actually finite, as $\mbf{R}_j^I=0$ for $j>M+2^nN$. 
 
The equations satisfied by $\mbf{u}_j^b=(u_j^b,\nu^{1/4}v_j^b)$ involve all the terms where $u_j^b$ is of order between $N+a+j2^{-n}$ and $N+a+(j+1)2^{-n}$.
 \begin{equation}\label{eq:bdry}\begin{cases}\partial_t \mbf{u}_j^b -\partial_{YY}\mbf{u}_j^b +\nabla_{x,Y} p_j^b = \mbf{R}^b_j;\\ 
 \nabla_{x,Y} \cdot \mbf{u}_j^b = 0;\\ 
 \textup{B.C.}(u^b_j,\gamma);\\
 \lim_{Y\to +\infty}v^b_j=0.
 \end{cases} \end{equation}
 where B.C.($u^b_j,\gamma$) is the appropriate boundary condition on $u^b_j$, depending on $\gamma$, which we will derive in the next subsection, and
 \begin{align}\label{eq:bdryj}\mbf{R}^b_j&=\ll\frac{u_s^{\nu}}{\nu^{2^{-n}}}\cdot \nabla \mbf{u}^b_{j-1}+ \mbf{u}^b_{j-1}\cdot \nabla \l \frac{u_s^{\nu}}{\nu^{2^{-n}}}\r\rr-\partial_{xx}\mbf{u}_b^{j-2^{n-1}}+\nu^a\sum_{k+\ell = j-2^nN}\mbf{u}_k^b\cdot \nabla \mbf{u}_{\ell}^b \nonumber\\ 
 &+\sum_{j_1+j_2=j-2^nN}\l u^I_{j_1}\partial_x\mbf{u}_{j_2}^b + u_{j_1}^I\partial_y \mbf{u}_{j_2+2^{n-2}}^b+\mbf{u}_{j_2}^b \cdot \nabla \mbf{u}^I_{j_1} \r \end{align}
First notice that all the terms in the above equation appear with index strictly smaller than $j$. A special remark must be given for the term containing $\partial_y \mbf{u}_{j_2+2^{n-2}}^b$. The extra $2^{n-2}$ indices arise to compensate for the differentiation by $y$ causing a loss of a $\nu^{1/4}$ factor. However, as $j_2 \leq j-2^nN$, we know that
 $$j_2+2^{n-2}\leq j+2^{n-2}(1-4N)<j,  $$
 which holds since $N\geq 1$.
 
 Secondly, $v^b_j$ can be derived from $u^b_j$ using the divergence free condition \eqref{eq:bdry}$_2$:
 \begin{equation}\label{eq:divfree}v^b_j(t,x,Y)=-\int_Y^{+\infty}\partial_x u^b_j(t,x,Z)\,\mathrm{d}Z; \end{equation}
 using this expression, the condition at infinity \eqref{eq:bdry}$_4$ is automatically satisfied.
 
 Thus $u_s+\mbf{u}^I+\nu^a \mbf{u}^b$ satisfies the Navier-Stokes equations with error $\mbf{R}^{\textup{app}}=\mbf{R}^I+\nu^a \mbf{R}^b$, where $\mbf{R}^b$ consists of all the terms of order greater or equal to $N+a+(M+1)2^{-n}$:
 \begin{equation}\label{eq:bdryfinal}\mbf{R}^{b}= \nu^N\sum_{j\geq M+1}\nu^{j2^{-n}}\mbf{R}^b_j.\end{equation}
As with $\mbf{R}^I$, the sum is actually finite.

  As in Proposition \ref{heatestimate}, we can then estimate the value of $\mbf{u}^b_j$ in terms of $\mbf{R}^b_j$ and the terms appearing at the boundary. To do this, we need to prove bounds on the individual terms appearing in $\mbf{R}^b_j$.  The only difference compared to \cite{paddick} and \cite{grenier} is in the terms 
\begin{equation}\label{eq:quantity}
    \frac{u_s^{\nu}(\sqrt{\nu}t,y)-U_s(y)}{\nu^{2^{-n}}}, \qquad \frac{u_s^{\nu}(\sqrt{\nu}t,\nu^{1/4}Y)}{\nu^{2^{-n}}},
\end{equation}
which need to be bounded uniformly in $\nu$. More precisely, he first term needs to be bounded in $H^s$ for all $s\geq 0$. For the second term a pointwise bound of the derivatives for all $t,Y\geq 0$ with slow growth as $Y\to +\infty$ suffices. Indeed we are ultimately multiplying these terms by a rapidly decaying function in $Y$, therefore some growth in $Y$ is allowed.
\begin{lemma}\label{usbound} Assume that $U_s \in C_b^{\infty}(\mathbb{R}_+)$, and $U_s$ satisfies \eqref{eq:assumpt}. Then the function
\begin{equation} 
    \nu \mapsto \frac{u_s^{\nu}(\sqrt{\nu}t,y)-U_s(y)}{\nu^{2^{-n}}}
\end{equation}
is bounded in $H^s(\mathbb{R}_+)$ for all $s\geq 0$, uniformly as $\nu \to 0$.
\end{lemma}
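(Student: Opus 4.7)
The plan is to write the difference as a time integral using the heat equation, then reduce the required bound to an estimate on the second derivative of the Robin extension $\tilde U_s^\nu$ of $U_s$, which can be controlled uniformly in $\nu$ via the convergence corollaries of Section \ref{heat}.

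First I would use the heat equation $\partial_t u_s^\nu = \partial_{\tilde y\tilde y}u_s^\nu$ to write, for every $k\in\mathbb{Z}_{\geq 0}$ and $y\geq 0$,
\[
\partial_y^k\bigl(u_s^\nu(\sqrt{\nu}t,y)-U_s(y)\bigr)=\int_0^{\sqrt{\nu}t}\partial_y^{k+2}u_s^\nu(\sigma,y)\,\mathrm{d}\sigma.
\]
By Proposition \ref{heatrobin} applied with $a=\nu^{1/2-\gamma}$, we have $u_s^\nu(\sigma,\cdot)=K(\sigma)\star\tilde U_s^\nu$ on $\mathbb{R}$, where $\tilde U_s^\nu$ is the extension of $U_s$ relative to the coefficient $a$; in particular $\partial_y^{k+2}u_s^\nu(\sigma,\cdot)=K(\sigma)\star(\tilde U_s^\nu)^{(k+2)}$. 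Since $\|K(\sigma)\|_{L^1}=1$, Young's inequality gives
\[
\|\partial_y^{k+2}u_s^\nu(\sigma)\|_{L^2(\mathbb{R}_+)}\leq\|(\tilde U_s^\nu)^{(k+2)}\|_{L^2(\mathbb{R})},
\]
uniformly in $\sigma\geq 0$.

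The next step is to bound the right-hand side uniformly in $\nu$. Here I would split into the two regimes of assumption \eqref{eq:assumpt}. When $\gamma>1/2$, $a\to+\infty$ and Corollary \ref{conv} yields $\|(\tilde U_s^\nu)^{(k+2)}-(u_0^\infty)^{(k+2)}\|_{L^2(\mathbb{R})}=O(a^{-1})$, while when $\gamma<1/2$, $a\to 0^+$ and Corollary \ref{convneu} yields $\|(\tilde U_s^\nu)^{(k+2)}-(u_0^0)^{(k+2)}\|_{L^2(\mathbb{R})}=O(a^{1/2})$. The limit extensions $(u_0^\infty)^{(k+2)}$ and $(u_0^0)^{(k+2)}$ are, up to sign, the odd or even extensions of $U_s^{(k+2)}$. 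Since $U_s^{(j)}\in L^1(\mathbb{R}_+)\cap L^\infty(\mathbb{R}_+)\subset L^2(\mathbb{R}_+)$ for $j\geq 1$ (which holds in both regimes under \eqref{eq:assumpt}), these limits lie in $L^2(\mathbb{R})$. Hence $\|(\tilde U_s^\nu)^{(k+2)}\|_{L^2(\mathbb{R})}\leq C_{k}$ uniformly as $\nu\to 0$.

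Plugging these estimates into the integral representation and summing over $k=0,\dots,s$ gives
\[
\|u_s^\nu(\sqrt{\nu}t,\cdot)-U_s\|_{H^s(\mathbb{R}_+)}\leq C_s\sqrt{\nu}\,t,
\]
so that after dividing by $\nu^{2^{-n}}$ we obtain a bound of order $t\,\nu^{1/2-2^{-n}}$. By the definition of $n$ in \eqref{eq:ndef}, $n\geq 2$ and therefore $2^{-n}\leq 1/4<1/2$, so $\nu^{1/2-2^{-n}}\to 0$ as $\nu\to 0$. This yields the desired uniform boundedness (indeed, smallness) in $H^s$.

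The one subtle point is step three: the corollaries give convergence rates in $W^{k,p}$, not bounds on the individual norms, so one must observe that $u_0^\infty$ itself need not lie in $L^2(\mathbb{R})$ when $\gamma>1/2$ (since $U_s$ has nonzero limit at infinity). This is harmless because we only need the $L^2$ norms of the second and higher derivatives of $(\tilde U_s^\nu)$, and these are exactly the ones for which \eqref{eq:assumpt} guarantees $L^1\cap L^\infty$ integrability of $U_s$'s corresponding derivative. The case $\gamma<1/2$ is even cleaner, since then $U_s$ itself is in $L^1\cap L^\infty$ and $u_0^0$ lies in every $H^s(\mathbb{R})$.
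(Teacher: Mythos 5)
Your proof is correct, but it follows a genuinely different route from the paper's. The paper decomposes $u_s^{\nu}-U_s=(u_s^{\nu}-u_s^0)+(u_s^0-U_s)$: the second piece is handled by a mean-value/heat-kernel argument giving $O(\sqrt{\nu})$, while the first piece is controlled by the quantitative convergence rates of Corollary \ref{conv} (rate $\nu^{\gamma-1/2}$) or Corollary \ref{convneu} (rate $\nu^{1/4-\gamma/2}$), which must then be matched against $\nu^{2^{-n}}$ using the definition \eqref{eq:ndef} of $n$ --- this is precisely where the case analysis on $\gamma$ bites. You instead work directly with $u_s^{\nu}$ via the fundamental theorem of calculus in time, the representation $u_s^{\nu}(\sigma)=K(\sigma)\star\tilde U_s^{\nu}$ from Proposition \ref{heatrobin}, and Young's inequality, so that all you need is \emph{uniform boundedness} (not a convergence rate) of $\|(\tilde U_s^{\nu})^{(k+2)}\|_{L^2(\mathbb{R})}$; the resulting bound $O(\sqrt{\nu}\,t)$ beats $\nu^{2^{-n}}$ simply because $2^{-n}\leq 1/4<1/2$, independently of $\gamma$. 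This is cleaner and in fact slightly stronger for $\gamma$ near $1/2$ (where the paper's rate degrades to $\nu^{\gamma-1/2}$). Three small remarks. First, your citations should point to Proposition \ref{convid} and to the corresponding propositions of Section \ref{lim0} (or directly to formulas \eqref{eq:derivatives2} and \eqref{eq:derivatives3}) rather than to Corollaries \ref{conv} and \ref{convneu}, which concern the evolved solutions $u^a$ rather than the extensions $u_0^a$; note also that the two regimes of \eqref{eq:assumpt} require different formulas for the extension's derivatives (the $a^{-1}$-type bound blows up as $a\to 0^+$, so for $\gamma<1/2$ one must use \eqref{eq:derivatives3} and the hypothesis $U_s^{(k)}\in L^1$ for $k=0$ as well). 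Second, your final bound carries an explicit factor of $t$, so the claimed uniformity holds only for $\sqrt{\nu}\,t$ in a bounded range; this is not a defect relative to the paper, whose own mean-value step ("$\tau\in[0,1]$") carries the same implicit restriction, and it is harmless in the application since $t\leq T^{\nu}$ with $\sqrt{\nu}T^{\nu}\to 0$ --- indeed the statement cannot be uniform over all $t\geq 0$, since $\|u_s^{\nu}(\sqrt{\nu}t)-U_s\|_{L^2}$ grows like $(\nu t^2)^{1/8}$ when $U_{\infty}\neq 0$. Third, interchanging $\partial_y^{k+2}$ with the convolution uses that $\tilde U_s^{\nu}\in C^{\infty}(\mathbb{R})$, which is guaranteed by the vanishing of all derivatives of $U_s$ at the origin in \eqref{eq:assumpt}; it is worth saying so explicitly.
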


\begin{proof} Replacing $\nu=0$ in $u_s^{\nu}$, we know that
$$\frac{u_s^0(\sqrt{\nu}t,y)-U_s(y)}{\sqrt{\nu}}=\partial_{yy} u_s^0(\sqrt{\nu}\tau,y), \quad \tau=\tau(t) \in [0,1],$$
where $u_s^0$ satisfies the Dirichlet problem for the heat equation. But
$$\|\partial_{yy}u_s^0(\sqrt{\nu}\tau)\|_{H^s}=\|K(\sqrt{\nu}\tau)\star \tilde{U}_s''\|_{H^s}\leq \|\tilde{U}_s''\|_{H^s}, $$
where $\tilde{U}_s$ is the odd (if $\gamma >1/2$) or even (if $\gamma <1/2$) extension of $U_s $ from $\mathbb{R}_+$ to $\mathbb{R}$,
which is independent from $\nu$. Notice that $n\geq 2$, so $2^{-n}\leq 1/2$.

There are two cases. If $\gamma >1/2$ we want to show that
$\dfrac{(u_s^{\nu}-u_s^0)(\sqrt{\nu}t,y)}{\nu^{\gamma-1/2}}$
is uniformly bounded in $H^s$, which allows us to conclude since $2^{-n}\leq \gamma-1/2$ by definition.
But we know from Corollary \ref{conv} that 
$$\|u_s^{\nu}(t)-u_s^0(t)\|_{H^s}\leq C_{s}\nu^{\gamma-1/2},\qquad \forall t\geq 0, $$
 which immediately provides the desired result.
 
 If $\gamma <1/2$ we replicate the above argument except we consider $\dfrac{u_s^{\nu}-u_s^0}{ \nu^{1/4-\gamma/2}}$, and apply Corollary \ref{convneu}.
\end{proof}
\begin{remark} For profiles that do not satisfy assumption \eqref{eq:assumpt}, the above estimate does not hold. For instance, take $\gamma>1/2$, $U_s(y)=1$ for all $y\geq 0$, so that $U_s''(y)=0$. Then letting $\alpha:=\nu^{1/2-\gamma}$, we have for all $\alpha >0$,
$$\tilde{U}_s^{\alpha}(y)= \chi_{[0,\infty)}+\l -1 + 2\alpha e^{\alpha y}\r\chi_{(-\infty,0]}. $$
The evolution of the profile is given by
$$u_s^{\alpha}(t,y)=\textup{Erf}\l \frac{y}{2\sqrt{t}}\r + e^{\alpha(\alpha t+y)}\textup{Erfc}\l \frac{2\alpha t+y}{2\sqrt{t}}\r , $$
and its second derivative is
$$(u_s^{\alpha})''(t,y)= \alpha\l -2K(t,y)+ \alpha e^{\alpha(\alpha t+y)}\textup{Erfc}\l \frac{2\alpha t+y}{2\sqrt{t}}\r\r.  $$
The second term converges exponentially quickly to $0$ in all $L^p$ norms as $\alpha \to 0$. However, the term $-2\alpha K(t,y)$ does not - for instance, $\|K(\sqrt{\nu}t,y)\|_{L^2} \sim \nu^{-1/8}$. This term is the result of the Dirac delta appearing in $(\tilde{U}_s^{\alpha})''$. If $U_s$ satisfies \eqref{eq:assumpt}, then $\tilde{U}_s^{\alpha}$ is smooth and such singularities cannot occur.
\end{remark}
\begin{lemma}\label{otherbound} Assume that $U_s \in C_b^{\infty}(\mathbb{R}_+)$, and $U_s$ satisfies \eqref{eq:assumpt}. Then \begin{gather}\label{eq:otherbound}
    \left|\frac{u_s^{\nu}(\sqrt{\nu}t,\nu^{1/4}Y)}{\nu^{2^{-n}}}\right|\leq C_1Y+C_2\qquad \forall Y\geq 0,\\
    \label{eq:otherbound2}
    \left|\frac{\partial_Y^ku_s^{\nu}(\sqrt{\nu}t,\nu^{1/4}Y)}{\nu^{1/4}}\right|\leq C_3,\qquad \forall Y\geq 0,\,k\geq 1,
\end{gather}
where $C_1,C_2,C_3>0$ do not depend on $t\geq 0$ or on $\nu \to 0$.
\end{lemma}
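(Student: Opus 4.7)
The plan is to combine Lemma \ref{usbound} with a Sobolev embedding to get uniform $L^{\infty}$ control on all $y$-derivatives of $u_s^{\nu}(\sqrt{\nu}t,\cdot)$, and then translate these bounds into estimates in the $Y=y/\nu^{1/4}$ variable via the chain rule.

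First I would establish the auxiliary fact that $\|\partial_y^k u_s^{\nu}(\sqrt{\nu}t,\cdot)\|_{L^{\infty}(\mathbb{R}_+)}\leq C_k$ uniformly in $t\geq 0$ and $\nu\to 0$. By Lemma \ref{usbound}, $(u_s^{\nu}(\sqrt{\nu}t,\cdot)-U_s)/\nu^{2^{-n}}$ is bounded in $H^s(\mathbb{R}_+)$ for every $s\geq 0$. Applying the Sobolev embedding $H^s\hookrightarrow W^{k,\infty}$ for $s>k+1/2$, together with $\nu^{2^{-n}}\leq 1$, we get
\[ \|\partial_y^k u_s^{\nu}(\sqrt{\nu}t,\cdot)-\partial_y^k U_s\|_{L^{\infty}}\leq C_k\nu^{2^{-n}}\leq C_k. \]
Since $U_s\in C_b^{\infty}(\mathbb{R}_+)$, the bound on $\partial_y^k u_s^{\nu}$ follows.

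For \eqref{eq:otherbound2}, I would simply apply the chain rule:
\[ \partial_Y^k u_s^{\nu}(\sqrt{\nu}t,\nu^{1/4}Y)=\nu^{k/4}(\partial_y^k u_s^{\nu})(\sqrt{\nu}t,\nu^{1/4}Y), \]
so dividing by $\nu^{1/4}$ yields a factor of $\nu^{(k-1)/4}$, which is bounded by $1$ for $k\geq 1$ and $\nu\leq 1$. The uniform $L^{\infty}$ bound on $\partial_y^k u_s^{\nu}$ from the previous step then gives the result.

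For \eqref{eq:otherbound}, I would use the fundamental theorem of calculus:
\[ u_s^{\nu}(\sqrt{\nu}t,\nu^{1/4}Y)=u_s^{\nu}(\sqrt{\nu}t,0)+\int_0^{\nu^{1/4}Y}\partial_y u_s^{\nu}(\sqrt{\nu}t,z)\,\mathrm{d}z. \]
By assumption \eqref{eq:assumpt}, $U_s(0)=0$, so the first term is controlled by $|u_s^{\nu}(\sqrt{\nu}t,0)-U_s(0)|\leq C\nu^{2^{-n}}$ from the $k=0$ case of the $L^{\infty}$ bound above. The integral is bounded by $C_1\nu^{1/4}Y$ using the $k=1$ case. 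Dividing by $\nu^{2^{-n}}$ and using $\nu^{1/4-2^{-n}}\leq 1$ (which holds because $n\geq 2$ implies $2^{-n}\leq 1/4$, and $\nu<1$) produces the claimed $C_1 Y+C_2$ bound.

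The only mildly delicate point is ensuring that the constants remain uniform in $t$ as $\nu\to 0$, but this is built into Lemma \ref{usbound}, whose estimate holds for all $t\geq 0$ (since $u_s^0$ solves the heat equation with a fixed, $\nu$-independent extended initial datum $\tilde U_s''$). Once the $L^{\infty}$ derivative bounds are in hand, everything else is bookkeeping with the powers of $\nu$.
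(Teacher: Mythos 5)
Your proof is correct, and its skeleton is the same as the paper's: the fundamental theorem of calculus from $y=0$ for \eqref{eq:otherbound}, the chain rule $\partial_Y^k=\nu^{k/4}\partial_y^k$ for \eqref{eq:otherbound2}, plus the observation that $2^{-n}\leq 1/4$ so the powers of $\nu$ close. The one genuine difference is where the uniform-in-$(t,\nu)$ control of $u_s^{\nu}$ and its derivatives comes from. The paper goes back to Corollary \ref{conv} (resp.\ Corollary \ref{convneu}) to replace $u_s^{\nu}$ by the limiting Dirichlet (resp.\ Neumann) evolution $u_s^0$ up to $O(\nu^{\gamma-1/2})$ (resp.\ $O(\nu^{1/4-\gamma/2})$), and for $\gamma>1/2$ exploits $u_s^0(\cdot,0)=0$ to kill the boundary term; the case $\gamma<1/2$ is then dispatched with a one-line remark. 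You instead route everything through Lemma \ref{usbound} combined with the Sobolev embedding $H^s(\mathbb{R}_+)\hookrightarrow W^{k,\infty}(\mathbb{R}_+)$, and control the boundary term via $|u_s^{\nu}(\sqrt{\nu}t,0)-U_s(0)|\leq C\nu^{2^{-n}}$ together with the assumption $U_s(0)=0$ from \eqref{eq:assumpt}. This buys a uniform treatment of both signs of $\gamma-1/2$ with no case split, and in particular avoids having to argue separately about the trace of the Neumann evolution at $y=0$; the cost is that you get the (weaker but sufficient) rate $\nu^{2^{-n}}$ for the boundary term rather than the sharper $\nu^{\gamma-1/2}$ the paper extracts. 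Both arguments ultimately rest on the same corollaries, since Lemma \ref{usbound} is itself proved from them, so this is a legitimate and slightly cleaner repackaging rather than a new idea.
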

 \begin{proof}We prove the result for $\gamma >1/2$. 
\begin{align*}
     \left|{u_s^{\nu}(\sqrt{\nu}t,\nu^{1/4}Y)}\right|&\leq |u_s^{\nu}(\sqrt{\nu}t,0)|+ \nu^{1/4}Y \|\partial_y u_s^{\nu}(\sqrt{\nu}t)\|_{L^{\infty}}\\ 
     &\leq |u_s^0(\sqrt{\nu}t,0)|+ \nu^{1/4}Y \|\partial_y u_s^{0}(\sqrt{\nu}t)\|_{L^{\infty}}+ O(\nu^{\gamma-1/2})\\
     &\leq \nu^{1/4}Y \|\partial_y U_s^0\|_{L^{\infty}}+ O(\nu^{\gamma-1/2}).
\end{align*}
Hence, \eqref{eq:otherbound} follows. For \eqref{eq:otherbound2}, recall that $Y=\nu^{-1/4}y$ and therefore
\begin{align*}
    \partial_Y^ku_s^{\nu}(\sqrt{\nu}t,\nu^{1/4}Y)&=\nu^{k/4}\partial_y u_s^{\nu}(\sqrt{\nu}t,y)\\ 
    &=\nu^{k/4}\l \partial_y u_s^0(\sqrt{\nu}t,y)+ O(\nu^{\gamma-1/2})\r\\ 
    &=\nu^{k/4}\|\partial_y U_s^0\|_{L^{\infty}}.
\end{align*}
For $\gamma <1/2$ the proof is the same where we replace $\gamma-1/2$ with $1/4-\gamma/2$, using Corollary \ref{convneu}. 
\end{proof}
\subsection{The inviscid linear instability}\label{inviscon}
We first start by constructing a term $\mbf{u}_0^I(t,x,y)$ displaying the instability. The construction follows from our assumption that the shear profile $U_s$ is linearly unstable for the linearized Euler equation, and proceeds exactly as in  \cite{grenier} or \cite{paddick}.  We sketch it here for the reader's convenience. 

Let us first specify exactly what we mean by linearly unstable. Consider the linearized Euler equation around the shear profile $U_s$:
\begin{equation*}\begin{cases}\partial_t \mbf{u}+\mbf{u}\cdot \nabla U_s+U_s\cdot \nabla \mbf{u}+\nabla p = 0;\\
v=0 & y=0.\end{cases}\end{equation*}
Then there exists a (nontrivial) solution $\mbf{u}$ in the form
$$\mbf{u}(t,x,y)=e^{ik(x-ct)}(\phi'(y),-ik\phi(y)),\qquad k\in\mathbb{R},c\in \mathbb{C}, $$
if and only if $\phi$ is a (nontrivial) solution of the Rayleigh equation
\begin{equation}\label{eq:ray} \begin{cases}(U_s-c)(\partial_{yy}-k^2)\phi - U_s'' \phi=0;\\ 
\phi(0)=\lim_{y\to +\infty}\phi(y)=0.
\end{cases}\end{equation}
\begin{deff}\label{defunst}
For each fixed $k \in \mathbb{R}$ we say that $c$ is an \emph{eigenvalue} for the Rayleigh equation if there exists a nontrivial solution to the Rayleigh equation \eqref{eq:ray}. The shear profile $U_s$ is called \emph{linearly unstable for the Euler equation} when the associated Rayleigh equation admits an eigenvalue $c$ with $\Im c>0$. 
\end{deff}

For each wavenumber $k\in \mathbb{R}$, let $\sigma(k)$  be the supremum of the real parts of the associated eigenvalues of the Rayleigh equation. By Theorem 4.1 from \cite{grenier}, this supremum is always attained at some eigenvalue $\lambda_k\in \mathbb{C}$; moreover,  $k\mapsto \sigma(k)$ is real analytic, non-negative, and
$$\lim_{k \to 0}\sigma(k)=\lim_{|k|\to +\infty}\sigma(k)=0. $$
In particular, $\sigma$ admits a maximum $\sigma_0=\sigma(k_0)\geq 0$ over $\mathbb{R}$. 
Since $U_s$ is by assumption linearly unstable for Euler, $\sigma(k)$ is not identically zero, so $\sigma_0>0$.  Because $k\mapsto \sigma(k)$ is continuous, we have $\sigma(k)>0$ in a neighborhood $I$ of $k_0$. Thus for all $k\in I$, we have a maximally unstable solution of the Euler equation
$$\mbf{u}_{k}(t,x,y)=e^{ikx + \lambda_k t}(\psi_k'(y),-ik\psi_k(y)), $$
where $\psi_k$ solves the Rayleigh equation with wavenumber $k$ and eigenvalue $\lambda_k$. We thus define 
$$\mbf{u}_0^I(t,x,y):=\int_{\mathbb{R}}\varphi(k)\mbf{u}_{k}(t,x,y)\,\mathrm{d}k,$$
where $\varphi$ is supported in a small enough neighborhood $I'\subset I$ of $k_0$. Notice that thanks to this cut off, we have $\mbf{u}_0^I\in H^s(\mathbb{R}\times \mathbb{R}_+)$ for all $s\geq 0$. We remark that if the domain of the $x$ variables is bounded instead, e.g. $x\in \mathbb{T}$, we could simply define $\mbf{u}_0^I:=\mbf{u}_{k_0}$. 

We thus obtain
$$ \|\mbf{u}_0^I(t)\|_{H^s}^2\sim  \int_{I'}e^{2\sigma(k)t}\,\mathrm{d}k.$$
To estimate this integral, we can use a Taylor expansion of $\sigma(k)$ around $\sigma(k_0)$. Since $k_0$ is a maximum and $\sigma$ is real analytic and non-constant, we have
$$\sigma(k) \sim \sigma_0-\mu \sigma^{(2m)}(k_0)(k-k_0)^{2m}, $$
for some $\mu>0$ and $m\geq 1$. Therefore,
$$\int_{I'}e^{2\sigma(k)t}\,\mathrm{d}k\sim e^{2\sigma_0 t}\int_{I'}e^{-2\mu t(k-k_0)^{2m}}\,\mathrm{d}k \sim C\frac{e^{2\sigma_0 t}}{t^{1/2m}},\qquad \text{ as } t\to +\infty. $$
In the remainder of the argument, in line with \cite{paddick} we just assume $m=1$ (i.e. $\sigma_0$ is a nondegenerate maximum), in order to simplify the notation. All the results still hold for arbitrary $m$.

In this case, for all $s\geq 0$ there exists $C_s>0$ such that
\begin{equation}\label{eq:u0bound}\|\mbf{u}_0^I(t)\|_{H^s}\leq C_s\frac{e^{\sigma_0 t}}{(1+t)^{1/4}},\qquad \forall t\geq 0.\end{equation}
    Additionally (see \cite{paddick}, Section 3.2.2), there exists a bounded subset $\Omega_A\subset \mathbb{R}^2_+$, with measure of order $\sqrt{1+t}$ such that
\begin{equation}\label{eq:lowerbound}\|\mbf{u}_0^I\|_{L^2(\Omega_A)}\geq C' \frac{e^{\sigma_0 t}}{(1+t)^{1/4}},\qquad \forall t\geq 0.\end{equation}

Assuming $N\geq 1$ and $\nu\leq 1$, we can define times $T^{\nu}_{\theta}>0$ such that 
\begin{equation}\label{eq:time} \frac{e^{\sigma_0 T^{\nu}_{\theta}}}{(1+T^{\nu}_{\theta})^{1/4}}=\nu^{\theta-N}.\end{equation}
Notice that $\lim_{\nu\to 0^+}T^{\nu}_{\theta}=\infty$, but in the original variables, $\lim_{\nu\to 0^+}\sqrt{\nu}T^{\nu}_{\theta}= 0$. Moreover, for $\tau>0$ small enough depending on $\nu$, we have $T^{\nu}:=T_{\theta}^{\nu}-\tau>0$, and by \eqref{eq:lowerbound} 
$$\|\left.\nu^N \mbf{u}^I_0\right|_{t=T^{\nu}}\|_{L^2(\Omega_A)}\geq \delta(\tau) \nu ^{\theta}. $$
This proves \eqref{eq:idk14}, for some value $\delta$ depending on the choice of $\tau$. In other words, we can always subtract a value $\tau$ as large as we want from $T^{\nu}_{\theta}$, and the instability will hold at $t=T^{\nu}_{\theta}-\tau$, as long as $\nu$ is small enough. We will choose the specific value of $\tau>0$ later on.
\subsection{Correction of the boundary condition}\label{bdrycon}
As discussed above, we want $\mbf{u}^{\textup{app}}$ to satisfy the Navier boundary condition \eqref{eq:nsnavier2}$_3$. But since it is constructed using an asymptotic expansion, we cannot in principle expect it to be satisfied exactly. Instead, it will leave a remainder $\mbf{r}^\textup{app}=(r_1^\textup{app},r_2^\textup{app})$, which will be determined in this subsection.

Let us first consider the equation for the first component:
$$\partial_y (u_s^{\nu}+u^I+\nu^a u^b)-\nu^{1/2-\gamma}(u_s^{\nu}+u^I+{\nu}^a u^b)=0\qquad \text{ at }y=0. $$
 By assumption, the shear flow $u_s^{\nu}$ satisfies the Navier condition $\partial_y u_s^{\nu}-{\nu}^{1/2-\gamma}u_s^{\nu}=0$, so it can be eliminated from the above condition and we are left with
\begin{equation*}u^b = {\nu}^{\gamma-3/4}\partial_Y u^b +{\nu}^{\gamma-1/2-a}\partial_y u^I-{\nu}^{-a}u^I.\end{equation*}\begin{itemize}
    \item If $\gamma >3/4\implies a=0$, the above relation reduces to
\begin{equation}\label{eq:navierub}u^b  = {\nu}^{\gamma-3/4}\partial_Y u^b + {\nu}^{\gamma-1/2}\partial_y u^I-u^I,\end{equation}
and as $\nu \to 0$ to just
$$u^b=-u^I,$$
so $u^b$ satisfies a Dirichlet problem

 \item If $\gamma=3/4\implies a=0$,  we get
$$\partial_Y u^b-u^b= u^I -\nu^{1/4}\partial_y u^I,$$
so in the limit $\nu\to 0$, $u^b$ satisfies a Robin problem.
\item If $\frac{1}{2}<\gamma < 3/4 \implies a=3/4-\gamma$, we obtain 
\begin{equation}\label{eq:navierub2}\partial_Y u^b=\nu^{3/4-\gamma}u^b +u^I -{\nu}^{\gamma-1/2}\partial_y u^I, \end{equation}
and as $\nu\to 0$ to just
$$\partial_Y u^b = u^I,$$
so $u^b$ satisfies a Neumann problem.

\item If $\gamma <1/2\implies a=1/4$,  we obtain 

$$\partial_Y u^b= \nu^{3/4-\gamma}u^b+\nu^{1/2-\gamma}u^I-\partial_y u^I,$$
as $\nu \to 0$ we get
$$\partial_Y u^b=-\partial_y u^I, $$
which is again a Neumann problem. 
\end{itemize}
Notice that regardless of the value of $\gamma$, the terms with $u^I$ do not appear at a higher order compared to the terms with $u^b$. This means that we can obtain the same estimates for $u^b_j$ as we do for $u^I_j$. It would no longer be true if the value of $a$ was higher.

Let us plug the ansatze \eqref{eq:ansatz2} for $u^I$ and $u^b$ into the boundary conditions. Then we can derive recursive equations which determine the value of $u^b_j$. As discussed in Section \ref{heat}, if  $u^b_j$ or $\partial_y u^b_j$ appear with a coefficient vanishing with the viscosity, the solution cannot be bounded uniformly with respect to the viscosity. Hence we need to move such term in the next order. This is possible in each case thanks to our choice of $n$. In the following, each term with a negative index should be replaced with $0$. Note that $3/4-\gamma-2^{-n}\geq 0$ for all $\gamma > 3/4$, by definition of $n$. 
\begin{itemize}
    \item If $\gamma>3/4$: each  $u^b_j$ solves the Dirichlet problem
    \begin{equation}\label{eq:bdry3}
           u^b_j = \nu^{\gamma-3/4-2^{-n}}\partial_Yu^b_{j-1}+u^I_j-\nu^{\gamma-1/2}\partial_y u^I_{j}.
    \end{equation}
    The final error in the boundary condition will be
    \begin{equation}\label{eq:err0}
        r_1^{\textup{app}}=\nu^{\gamma-3/4-2^{-n}+M2^{-n}}\partial_Y {u}^b_M.
    \end{equation}
    
    \item If $\gamma=3/4$: each  $u^b_j$ solves the Robin problem
    \begin{equation}\label{eq:bdry2}
        \partial_Y u^b_j -u^b_j = u^I_j - \nu^{1/4}\partial_y u^I_j.
    \end{equation}
    Since all the terms appear with the same order, there will be no error in the boundary condition.
    \item If $1/2<\gamma<3/4$: each  $u^b_j$ solves the Neumann problem
    \begin{equation}\label{eq:bdry1}
        \partial_Y u^b_j= \nu^{3/4-\gamma-2^{-n}}u^b_{j-1}+u^I_j -\nu^{\gamma-1/2}\partial_y u^I_j.
    \end{equation}
    The final error in the boundary condition will be 
    \begin{equation}\label{eq:err3}r_1^{\textup{app}}= \nu^{3/4-\gamma-2^{-n}+M2^{-n}}{u}^b_M.\end{equation}
    Notice that in the limit case $\gamma=1/2$, we do recover the boundary problem considered in \cite{paddick}.
    \item If $\gamma <1/2$, each $u^b_j$ solves the Neumann problem
\begin{equation}\label{eq:bdry4}\partial_Y u^b_j = \nu^{3/4-\gamma-2^{-n}}u^b_{j-1}+\nu^{1/2-\gamma}u^I-\partial_y u^I,\end{equation}
and the error in the boundary condition is again given by \eqref{eq:err3}. 
\end{itemize}
Notice that, for each value of $\gamma$, the terms $u_{j'}^I$ only appear in the boundary condition for $u_j^b$ with indices $j'\leq j$. This allows us to construct $\mbf{u}_j^b$ starting from $\mbf{u}_{j'}^I$, $j'\leq j$.

As for the second component $v_j^b$, from \eqref{eq:internal}$_3$  it follows that for all $j\leq M$ we have
$$ v^I_j+\nu^{a+1/4-2^{-n}} v^b_{j-1}=0\qquad \text{ at }y=0. $$
Therefore,
\begin{equation}\label{eq:remsecond}v^{\textup{app}}=\nu^{N+a+1/4-2^{-n}+M2^{-n}}v_{M}^j=:r_2^{\textup{app}}\qquad \text{ at }y=0. \end{equation}
\subsection{Construction of $\mbf{u}^I_j$ and $\mbf{u}^b_j$}

We will  now $\mbf{u}_j^I$ and $\mbf{u}_j^b$ by induction on $j\in \mathbb{Z}_{\geq 0}$. The induction is organized as follows. We start from $\mbf{u}_0^I$, which was constructed in Section \ref{inviscon}. From $\mbf{u}_0^I$ we can construct $\mbf{u}_0^b$, applying the boundary condition derived in Section \ref{bdrycon}. Next, suppose we have constructed $\mbf{u}_{j'}^I$ and $\mbf{u}_{j'}^b$ for all $0\leq j'\leq j$. In the equation \eqref{eq:internal} satisfied by $\mbf{u}_{j+1}^I$, $\mbf{R}^I_{j+1}$ only depends on $\mbf{u}_j'^I$ for $j'\leq j$, while the boundary condition depends on $\mbf{u}_{j}^b$, all of which have already been constructed. Thus we can derive $\mbf{u}_{j+1}^I$. Similarly, by \eqref{eq:bdryj}, all the terms in $\mbf{R}_{j+1}^b$ only depend on $\mbf{u}^b_{j'}$ and $\mbf{u}^I_{j'}$ for $j'<j+1$, and the same goes for the boundary condition (see Section \ref{bdrycon}). Thus we obtain $\mbf{u}^b_{j+1}$. By induction, we can construct $\mbf{u}_j^I$ and $\mbf{u}_j^b$ for all $j=0,\dots,M$.

In what follows, to ease the notation we introduce for all $j=1,\dots,M$ the quantity
$$k_j:= 1+ \frac{j}{2^nN}. $$
\begin{prop} For all $s\geq 0$ there exists a constant $C=C(s,j)>0$ such that for all $t\geq 0$, and $j=0,\dots, M$, we have
\begin{align}\label{eq:intj}\|\mbf{u}_j^I(t)\|_{H^s}&\leq C \frac{e^{\sigma_0 k_j t}}{(1+t)^{k_j/4}}.\\
\label{eq:jbest} \|\mbf{u}^b_j(t)\|_{H^s}&\leq C \frac{e^{\sigma_0 k_j t}}{(1+t)^{k_j/4}}, \qquad \forall t\geq 0. \end{align}
Moreover, there exist functions $h_{k,\ell,j}(x)\in L^2(\mathbb{R}),\mu_j>0$ such that for all $k,\ell\in \mathbb{Z}_{\geq 0}$, and for all $t\geq 0,(x,y)\in \mathbb{R}\times \mathbb{R}_+$,
\begin{equation}\label{eq:jest2} |\partial_x^k\partial_Y^{\ell}\mbf{u}^b_j(t,x,Y)|\leq |h_{k,\ell,j}(x)| \frac{e^{\sigma_0 k_j t}}{(1+t)^{k_j/4}} e^{-\mu_j Y}, \qquad j=0,\dots,M. \end{equation}
\end{prop}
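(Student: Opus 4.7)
The plan is to prove the two estimates by strong induction on $j\in\{0,\dots,M\}$, alternating between $\mbf{u}_j^I$ and $\mbf{u}_j^b$. The base case $j=0$ is essentially given: the estimate for $\mbf{u}_0^I$ is precisely \eqref{eq:u0bound} since $k_0=1$, while $\mbf{u}_0^b$ solves the Stokes system \eqref{eq:bdry} with vanishing source $\mbf{R}_0^b=0$ and boundary data determined by $\mbf{u}_0^I$ at $y=0$ according to whichever of \eqref{eq:bdry3}--\eqref{eq:bdry4} applies. I would then apply Proposition~\ref{heatestimate} and Proposition~\ref{pointwise} to obtain \eqref{eq:jbest} and \eqref{eq:jest2} for $j=0$, using that the boundary data inherits the rate $e^{\sigma_0 t}/(1+t)^{1/4}$ from \eqref{eq:u0bound} via the trace theorem.

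For the inductive step, I would first construct $\mbf{u}_{j+1}^I$. The source $\mbf{R}_{j+1}^I$ defined in \eqref{eq:intremj} has three contributions to bound in $H^s$. The $S\mbf{u}_j^I$ term is handled by Lemma~\ref{usbound}, which controls $(U_s-u_s^\nu(\sqrt{\nu} t))/\nu^{2^{-n}}$ in $H^s$ uniformly in $\nu$, and its time rate $e^{\sigma_0 k_j t}/(1+t)^{k_j/4}$ is dominated by $e^{\sigma_0 k_{j+1} t}/(1+t)^{k_{j+1}/4}$. The diffusion term $\Delta \mbf{u}_{j+1-2^{n-1}}^I$ fits under the same rate. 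For the quadratic term the key arithmetic identity
\[
k_{j_1}+k_{j_2}=2+\frac{j_1+j_2}{2^nN}=1+\frac{j+1}{2^nN}=k_{j+1}\quad\text{when }j_1+j_2=j+1-2^nN
\]
shows that multiplying two factors carrying the inductive bound produces exactly the target rate. The linearized Euler equation around $U_s$, by the construction of Section~\ref{inviscon}, admits a semigroup bound of the form $C_s e^{\sigma_0 t}$ in $H^s$ norms; combining this with Duhamel's formula and Lemma~\ref{asympt} yields \eqref{eq:intj} for $j+1$, since $\sigma_0 k_{j+1}>\sigma_0$ for $j\geq 0$. The inhomogeneous boundary datum $v_{j+1}^I|_{y=0}=-\nu^{a+1/4-2^{-n}}v_j^b|_{y=0}$ is controlled by the inductive hypothesis on $v_j^b$ and can be absorbed via a standard divergence-free lift.

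With $\mbf{u}^I_{j+1}$ in hand, the function $\mbf{u}^b_{j+1}$ solves the Stokes system \eqref{eq:bdry} with source $\mbf{R}^b_{j+1}$ from \eqref{eq:bdryj} and the boundary condition derived in Section~\ref{bdrycon}. Every ingredient in $\mbf{R}^b_{j+1}$ carries an index strictly less than $j+1$, and the factors $u_s^\nu/\nu^{2^{-n}}$ and $\partial_Y u_s^\nu/\nu^{1/4}$ admit uniform-in-$\nu$ pointwise bounds by Lemma~\ref{otherbound}, with a mild linear growth in $Y$ that is dominated by the $e^{-\mu Y}$ decay of the $\mbf{u}^b_{j'}$ factors. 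Hence $\mbf{R}^b_{j+1}$ satisfies the hypotheses of Propositions~\ref{heatestimate} and~\ref{pointwise} with the desired exponential-in-$t$ factor and pointwise exponential decay in $Y$. Applying those propositions to $u^b_{j+1}$ yields \eqref{eq:jbest} and \eqref{eq:jest2} for $j+1$; the transverse component $v^b_{j+1}$ is then recovered via \eqref{eq:divfree} and inherits the same bounds by integration.

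The main obstacle I foresee is the $H^s$ semigroup estimate for the linearized Euler operator around $U_s$: the maximal growth rate $\sigma_0$ is the spectral quantity produced by the Rayleigh equation in Section~\ref{inviscon}, but upgrading it to an operator-norm bound on the half-space with the slip boundary condition requires a careful resolvent/spectral argument, in the spirit of \cite{grenier}. A secondary difficulty is the combinatorial bookkeeping --- one must verify at each step that the indices $j+1-2^{n-1}$ and $j+1-2^nN$ never exceed $j$ (the latter using $N\geq 1$) and that the arithmetic identity $k_{j_1}+k_{j_2}=k_{j+1}$ is always respected --- together with the fact that every $\nu$-dependent coefficient arising from $u_s^\nu$ is controlled uniformly in $\nu$, which is precisely the content of Lemmas~\ref{usbound} and~\ref{otherbound}.
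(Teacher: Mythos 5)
Your proposal is correct and follows essentially the same route as the paper: induction on $j$, the product estimate and arithmetic identity $k_{j_1}+k_{j_2}=k_{j_1+j_2+2^nN}$ for $\mbf{R}_j^I$, Lemmas \ref{usbound} and \ref{otherbound} for the $\nu$-uniform control of the shear-flow factors, and Propositions \ref{heatestimate} and \ref{pointwise} for the boundary-layer correctors. The ``main obstacle'' you flag --- the sharp semigroup bound for the linearized Euler operator --- is exactly the step the paper resolves by citing the known spectral estimate (Theorem 3.1 of \cite{paddick}), which gives $\|\mbf{u}_J^I\|_{H^{s-2}}\lesssim \|\mbf{R}_J^I\|_{H^s}$ with a loss of derivatives that is harmless since the estimates hold for all $s$.
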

\begin{proof}
The proof is by induction. The estimate \eqref{eq:intj} for $j=0$ is simply \eqref{eq:u0bound}, whereas \eqref{eq:jest2} and \eqref{eq:jbest} follow from Proposition \ref{pointwise}. Now suppose \eqref{eq:intj} and \eqref{eq:jbest} hold for $j<J$. We first look to obtain \eqref{eq:intj} for $j=J$. We know that $\mbf{u}_J^I$ satisfies \eqref{eq:internal} and \eqref{eq:intremj}. We want to find an $H^s$ estimate on the remainder $\mbf{R}_J^I$. Recall that, for all $f,g\in H^s$, for any $s\geq 0$ by Sobolev embeddings we have
$$\|fg\|_{H^s}\leq C_s \l \|f\|_{H^s}\|g\|_{L^{\infty}}+\|f\|_{L^{\infty}}\|g\|_{H^s}\r\leq C \|f\|_{H^{s+2}}\|g\|_{H^{s+2}}. $$
Since

Therefore, using Corollary \ref{usbound}, 
\begin{align*}
    \|\mbf{R}_J^I\|_{H^s}&\lesssim \|\mbf{u}_{J-1}^I\|_{H^{s+1}}+ \|\mbf{u}_{J-2}^I\|_{H^{s+2}}+\sum_{j_1+j_2=J-2^nN} \|\mbf{u}_{j_1}^I\|_{H^{s+2}}\|\mbf{u}_{j_2}^I\|_{H^{s+2}}\\
    &\lesssim \|\mbf{u}_{j-1}^I\|_{H^s}+\frac{e^{\sigma_0(k_{j_1}+k_{j_2})t}}{(1+t)^{(k_{j_1}+k_{j_2})/4}}\\
    &\lesssim  \frac{e^{\sigma_0 k_J t}}{(1+t)^{k_J/4}},
\end{align*}
where we used the equality
$$k_{j_1}+k_{j_2}=k_{j_1+j_2+2^nN}. $$

Finally, using well-known spectral estimates on the linearized Euler equations (see Theorem 3.1 from \cite{paddick}), since $\mbf{u}_{J-1}^b$ satisfies \eqref{eq:jbest}, we deduce that 
$$\|\mbf{u}_{J}^I\|_{H^{s-2}}\lesssim \|\mbf{R}_J^I\|_{H^s}\lesssim \frac{e^{\sigma_0 k_J t}}{(1+t)^{k_J/4}}. $$
Since the estimate works for all $s\geq 0$, we conclude that \eqref{eq:intj} holds for the index $j$.

Next, we look to prove the estimates for $\mbf{u}^b_J$. It is enough to prove \eqref{eq:jest2} as \eqref{eq:jbest}  immediately follows from it. Moreover, it is enough to prove \eqref{eq:jest2} for the first component $u^b_J$, as we can then deduce them for the second component $v^b_J$ by the divergence-free condition, as in \eqref{eq:divfree}. Recall that ${u}_J^b$ satisfies \eqref{eq:bdry} and \eqref{eq:bdryj} with boundary conditions \eqref{eq:bdry3}, \eqref{eq:bdry1} or \eqref{eq:bdry2}.  For each $x\in \mathbb{R}$, this is a heat equation in $Y$ with a remainder  ${R}_J^b$ as in \eqref{eq:bdryj}, an initial condition satisfying \eqref{eq:icbound},
and a boundary condition given by a sum of $u^I_{j},u^b_j$ and their derivatives for $j\leq J$. For the $u^I_j$ terms, by taking the trace at the boundary in \eqref{eq:intj} we deduce that
$$|\partial_x^{\ell}\partial_Y^k {u}_j^I(t,x,0)|\leq  |f_{k,\ell,J}(x)|\frac{e^{\sigma_0 k_J t}}{(1+t)^{k_J/4}},$$
where $f_{k,\ell,J}(x)\in L^2(\mathbb{R})$. Together with the inductive assumption \eqref{eq:jest2}, we deduce that the inhomogeneous part of the boundary condition as a whole satisfies the same estimate.

Now consider the remainders $R_J^b$. The only potential danger is the terms in the remainders containing $u_s(\sqrt{\nu}t,y)\nu^{-1/8}$ which need to be replaced with $u_s^{\nu}(\sqrt{\nu}t,y)\nu^{-2^{-n}}$. More precisely these terms are of the form 
\begin{align}\label{eq:start}&\frac{u_s^{\nu}(\sqrt{\nu}t,\nu^{1/4}Y)}{\nu^{2^{-n}}}\cdot \nabla_{x,Y}{u}_{J-1}^b+{u}_{J-1}^b\cdot \nabla_{x,Y} \l \frac{u_s^{\nu}(\sqrt{\nu}t,\nu^{1/4}Y)}{\nu^{2^{-n}}}\r 
\end{align}

By Proposition \ref{otherbound}, since $n\geq 2$, these terms and their derivatives can be bound pointwise by
\begin{equation}\label{eq:terms}|C_1Y+C_2|\cdot|\nabla_{x,Y}{u}_{J-1}^b(t,x,Y)|+ C_3\cdot  |{u}_{J-1}^b(t,x,Y)|,\end{equation}
Thus assuming ${u}_{J-1}^b$ satisfies \eqref{eq:jest2}, then the terms in \eqref{eq:start} also do, after decreasing the value of $\mu_{J}$ by an arbitrarily small quantity to accomodate for the extra linear growth in $Y$. Therefore the remainder $R_J^b$ satisfies the estimate \eqref{eq:jest2}. Since the estimate holds for $Y$-derivatives of all orders, we deduce that
$$|\partial_x^{\ell}\partial_y^k{R}^b_J(t,x,Y)|\leq |R_{k,\ell,J}(x)|\frac{e^{\sigma_0 k_j t}}{(1+t)^{k_j/4}}e^{-\mu_{J}'Y}, $$
where $R_{k,\ell,J}\in L^2(\mathbb{R})$. By Proposition \ref{pointwise}, we conclude that
$$|\partial_x^{\ell}\partial_y^k{u}^b_{J}(t,x,Y)|\leq |h_{k,\ell,J}(x)|\frac{e^{\sigma_0 k_{J} t}}{(1+t)^{k_{J}/4}}e^{-\mu_{J}Y}, $$
where, given the bound $g_j^b(x)\in L^2(\mathbb{R})$ for the initial conditions of $\mbf{u}_j^b$ as in \eqref{eq:icbound},
$$|h_{k,\ell,J}(x)|\leq  C\l |R_{k,\ell, J}(x)|+ |f_{k,\ell,J}(x)|+|g_j^b(x)|\r,$$
and thus $h_{k,\ell,J}(x)\in L^2(\mathbb{R})$.

Hence, by induction, \eqref{eq:jest2} is verified for all $0\leq J\leq M$. Since it holds for derivatives of all orders, the $H^s$ estimate immediately follows. 
\end{proof}
\begin{corollary}For all $s\geq 0$, we have
\begin{equation}\label{eq:intrem}
\|\mbf{R}^I\|_{H^s}\leq C_s \l  \nu^{N}\frac{e^{\sigma_0 t}}{(1+t)^{1/4}}\r ^{1+\frac{M+1}{2^nN}}\qquad \forall t\leq T^{\nu}_{\theta}.
\end{equation}
\end{corollary}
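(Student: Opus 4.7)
The plan is to bound $\mbf{R}^I$ termwise from its explicit expression \eqref{eq:intfinal}. A first observation is that the sum in \eqref{eq:intfinal} is actually finite: since $\mbf{u}_j^I = 0$ for $j > M$ by construction, and each of the three types of contributions to $\mbf{R}^I_j$ in \eqref{eq:intremj} requires some $\mbf{u}_{j'}^I$ with $j' \leq M$, one checks that $\mbf{R}^I_j = 0$ for $j > M + 2^n N$ (the binding constraint comes from the nonlinear term, whose indices must satisfy $j_1+j_2 = j - 2^n N \leq 2M$). Consequently the sum runs over $j = M+1, \ldots, M + 2^n N$, a fixed number of terms.

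The second step is to extend the intermediate estimate $\|\mbf{R}^I_j\|_{H^s} \lesssim e^{\sigma_0 k_j t}/(1+t)^{k_j/4}$, which appears inside the proof of the previous proposition, from $j \leq M$ to the range $M+1 \leq j \leq M+2^n N$. The same computation applies verbatim: the linear terms involving a single $\mbf{u}^I_{j'}$ have $k_{j'} \leq k_j$, while the quadratic terms $\mbf{u}^I_{j_1} \cdot \nabla \mbf{u}^I_{j_2}$ satisfy $k_{j_1} + k_{j_2} = k_j$ exactly via the identity $k_{j_1}+k_{j_2} = k_{j_1+j_2+2^n N}$, so the $H^s$ algebra property yields the correct factor. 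The mild subtlety that $(1+t)^{-k_{j'}/4} \geq (1+t)^{-k_j/4}$ for small $t$ is harmless, because the ratio $(1+t)^{(k_j-k_{j'})/4} e^{-\sigma_0(k_j-k_{j'})t}$ is uniformly bounded on $[0,\infty)$.

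With these bounds in hand, I substitute into \eqref{eq:intfinal} and factor out the $j=M+1$ term, using the identities $\nu^{N+(M+1)/2^n} = \nu^{Nk_{M+1}}$ and $e^{\sigma_0 k_{M+1}t}/(1+t)^{k_{M+1}/4} = (e^{\sigma_0 t}/(1+t)^{1/4})^{k_{M+1}}$. The resulting prefactor coincides exactly with the target right-hand side $(\nu^N e^{\sigma_0 t}/(1+t)^{1/4})^{k_{M+1}}$, so it suffices to bound
\begin{equation*}
\sum_{\ell=0}^{2^n N -1}\left[\nu \cdot \left(\frac{e^{\sigma_0 t}}{(1+t)^{1/4}}\right)^{1/N}\right]^{\ell/2^n}
\end{equation*}
by a constant. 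This is precisely where the constraint $t \leq T^\nu_\theta$ enters: from \eqref{eq:time}, on this interval one has $e^{\sigma_0 t}/(1+t)^{1/4} \leq C\nu^{\theta-N}$ for $\nu$ small enough (so that the endpoint value dominates the initial value $1$), hence the bracketed quantity is bounded by $C^{1/N}\nu^{\theta/N} \leq C^{1/N}$ since $\theta \geq 0$ and $\nu \leq 1$. Each of the finitely many terms is therefore uniformly bounded, and the estimate is complete.

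The main obstacle is pure bookkeeping: tracking the algebraic identities among the indices $k_j, j_1, j_2, 2^n N$, and confirming that the restriction $t \leq T^\nu_\theta$ is exactly the mechanism absorbing the subleading $\nu$-powers — an absorption crucially enabled by $\theta \geq 0$, which is automatically built into the definition \eqref{eq:thetadef}.
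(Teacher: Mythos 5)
Your proof is correct and follows essentially the same route as the paper: both arguments rewrite each term $\nu^{N+j2^{-n}}e^{\sigma_0 k_j t}/(1+t)^{k_j/4}$ of the (finite) sum as $\bigl(\nu^{N}e^{\sigma_0 t}/(1+t)^{1/4}\bigr)^{k_j}$ and then use $t\leq T^{\nu}_{\theta}$ together with $\theta\geq 0$ to absorb the subleading terms into the $j=M+1$ term. If anything, your factoring-out of the leading term and explicit bounding of the remaining finite sum by a constant is slightly more careful than the paper's presentation, which writes the tail as $\sum_{j\geq M+2}\nu^{\theta k_j}$ before absorbing it.
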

\begin{proof}By \eqref{eq:intj} and \eqref{eq:intfinal}, we know that there exists an $M'>M$ such that
\begin{align*}\|\mbf{R}^I(t)\|_{H^s}&\leq C_s\nu^N\sum_{j=M+1}^{M'}\nu^{j2^{-n}}\frac{e^{\sigma_0 k_j t}}{(1+t)^{k_j/4}}=C_s \sum_{j= M+1}^{M'}\l\nu^{N} \frac{e^{\sigma_0  t}}{(1+t)^{1/4}}\r^{k_j}\\ 
&\leq C_s\l\nu^N\frac{e^{\sigma_0  t}}{(1+t)^{1/4}}\r^{1+\frac{M+1}{2^nN}}+C_s\sum_{j= M+2}^{M'}\nu^{\theta k_j}\\ 
&\leq  C_s\l\nu^N\frac{e^{\sigma_0  t}}{(1+t)^{1/4}}\r^{1+\frac{M+1}{2^nN}}.\end{align*}
\end{proof}
This estimate will be necessary in order to prove corresponding estimates on the approximate solution $\mbf{u}^{\textup{app}}$.
 
In the same way as for $\mbf{R}^I$ in \eqref{eq:intrem}, we deduce the following estimate for $\mbf{R}^b$.
\begin{corollary} For all $\ell,k\in \mathbb{Z}_{\geq 0}$ there exist constants $C_{k,\ell},\mu>0$ such that for all $t\leq T^{\nu}_{\theta}$ we have
\begin{equation}\label{eq:rb}|\partial_x^\ell \partial_Y^k \mbf{R}^b(t,x,Y)|\leq C_{k,\ell} \l\nu^{N}\frac{e^{\sigma_0 t}}{(1+t)^{1/4}} \r^{1+\frac{M+1}{2^nN}}e^{-\mu y}.\end{equation}
\end{corollary}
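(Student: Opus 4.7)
The plan is to expand $\mbf{R}^b$ via \eqref{eq:bdryfinal} as the finite sum $\nu^N\sum_{j=M+1}^{M'}\nu^{j2^{-n}}\mbf{R}^b_j$ (it is finite because the right-hand side of \eqref{eq:bdryj} involves only indices strictly less than $j$ and equals zero once $j$ is large enough), and then establish a pointwise bound with exponential decay in $Y$ for each of the five families of terms appearing in \eqref{eq:bdryj}. The ingredients are already in place: the pointwise estimate \eqref{eq:jest2} for $\mbf{u}^b_j$ and its $(x,Y)$-derivatives, the Sobolev embedding applied to \eqref{eq:intj} to produce pointwise bounds on $\mbf{u}^I_{j}$ (and its derivatives evaluated at $y=\nu^{1/4}Y$), and Lemma \ref{otherbound} to control the shear-flow factors $u_s^{\nu}(\sqrt{\nu}t,\nu^{1/4}Y)/\nu^{2^{-n}}$ with only linear growth in $Y$.

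I would treat the five families of terms in $\mbf{R}^b_j$ in turn. The two shear-flow terms $\nu^{-2^{-n}}u_s^{\nu}\cdot\nabla\mbf{u}^b_{j-1}$ and $\mbf{u}^b_{j-1}\cdot\nabla(\nu^{-2^{-n}}u_s^{\nu})$ are handled by multiplying the pointwise inequalities of Lemma \ref{otherbound} with \eqref{eq:jest2}, absorbing the factor $C_1Y+C_2$ into the exponential $e^{-\mu_{j-1}Y}$ at the price of a slightly smaller rate $\mu'_{j-1}<\mu_{j-1}$, exactly as already done in the inductive step of \eqref{eq:jest2}. The diffusive term $\partial_{xx}\mbf{u}^b_{j-2^{n-1}}$ is bounded directly by \eqref{eq:jest2}. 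For the self-interaction $\nu^a\mbf{u}^b_k\cdot\nabla\mbf{u}^b_\ell$, two applications of \eqref{eq:jest2} yield a pointwise estimate with combined exponential decay $e^{-(\mu_k+\mu_\ell)Y}$. Finally, the mixed terms $u^I_{j_1}\partial_x\mbf{u}^b_{j_2}$, $u^I_{j_1}\partial_y\mbf{u}^b_{j_2+2^{n-2}}$ and $\mbf{u}^b_{j_2}\cdot\nabla\mbf{u}^I_{j_1}$ are controlled by pairing \eqref{eq:jest2} (with its exponential decay in $Y$) with Sobolev $L^\infty$ bounds on $\mbf{u}^I_{j_1}$ obtained from \eqref{eq:intj}.

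In each product I would use the identity $k_{j_1}+k_{j_2}=k_{j_1+j_2+2^nN}$, already exploited in the derivation of \eqref{eq:intrem}, to collapse the time factors into the canonical form $e^{\sigma_0 k_j t}(1+t)^{-k_j/4}$. This gives, for each $j\in\{M+1,\dots,M'\}$, an estimate of the shape
\[
\bigl|\partial_x^\ell\partial_Y^k\mbf{R}^b_j(t,x,Y)\bigr|\leq C_{k,\ell,j}\,\frac{e^{\sigma_0 k_j t}}{(1+t)^{k_j/4}}\,e^{-\mu_j'Y}.
\]
Setting $\mu:=\min_{M+1\leq j\leq M'}\mu'_j$, summing over $j$, and using $k_j\geq 1+(M+1)/(2^nN)$ together with the fact that for $t\leq T^{\nu}_\theta$ we have $\nu^{\theta k_j}\leq 1$ for $j\geq M+1$ (so the extra $\nu^{(j-M-1)2^{-n}}$ factors contribute at most $1$), one recovers the claimed bound with the extra factors absorbed into $C_{k,\ell}$, exactly as in the proof of \eqref{eq:intrem}.

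The main technical point, and the only place that is not a routine transcription of the argument for \eqref{eq:intrem}, is keeping the pointwise exponential decay in $Y$ alive through all products while accommodating the polynomial-in-$Y$ growth coming from Lemma \ref{otherbound}. This is precisely the device used in the inductive proof of \eqref{eq:jest2}: one trades an arbitrarily small fraction of the decay rate for the factor $C_1Y+C_2$. No new estimate is required beyond careful bookkeeping of the pointwise exponentially decaying bounds of each factor.
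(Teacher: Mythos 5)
Your proposal is correct and follows essentially the same route as the paper, which gives no explicit proof beyond the remark that the estimate is obtained ``in the same way as for $\mbf{R}^I$'': the pointwise bounds on the individual $\mbf{R}^b_j$ (via \eqref{eq:jest2}, Lemma \ref{otherbound}, and Sobolev control of $\mbf{u}^I_{j}$) are exactly those already established in the inductive proof of \eqref{eq:jest2}, and the summation over $j\geq M+1$ mirrors the derivation of \eqref{eq:intrem}. Your write-up simply makes explicit the bookkeeping the paper leaves implicit.
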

Putting \eqref{eq:intrem} and \eqref{eq:rb} together, we obtain the estimates for the remainder $\mbf{R}^{\textup{app}}=\mbf{R}^I + \nu^a \mbf{R}^b$: for all $s\geq 0$, we have
\begin{equation}\label{eq:finalrem}
    \|\mbf{R}^{\textup{app}}(t)\|_{H^s}\leq C_s \l\nu^{N}\frac{e^{\sigma_0 t}}{(1+t)^{1/4}} \r^{1+\frac{M+1}{2^nN}}.
\end{equation}
Now let us consider the remainder $\mbf{r}^{\textup{app}}$ in the Navier boundary condition.  For the first component, we have by \eqref{eq:jbest}, 
\begin{equation}\label{eq:rapp}\|r_1^{\textup{app}}(t)\|_{L^2(y=0)}\leq C_{M} \nu^{N+\frac{M+1}{2^n}}\frac{e^{\sigma_0 k_{M+1}t}}{(1+t)^{k_{M+1}/4}}=C_M\l \nu^N \frac{e^{\sigma_0 t}}{(1+t)^{1/4}} \r^{1+\frac{M+1}{2^nN}}. \end{equation}
If $\gamma=3/4$ the above inequality is trivially true since $r_1^{\textup{app}}=0$. When $\gamma <\frac{1}{2}$, by \eqref{eq:err3} we further have 
\begin{equation}\label{eq:rapp2}\nu^{\gamma-3/4+2^{-n}}\|r_1^{\textup{app}}(t)\|_{L^2(y=0)}\leq C_M\l \nu^N \frac{e^{\sigma_0 t}}{(1+t)^{1/4}} \r^{1+\frac{M+1}{2^nN}}. \end{equation}
Notice that $\gamma-3/4+2^{-n}\leq \gamma-1/2<0$. 

For the second component $r_2^{\textup{app}}$, by \eqref{eq:remsecond}, discarding the $\nu^{a+1/4-2^{-n}}$ factor we similarly obtain, using \eqref{eq:jest2},
\begin{equation}\label{eq:r2app}|\partial_x^{\ell}\partial_y^k r_2^{\textup{app}}(t,x,0)| \leq C_{k,\ell} \l \nu^N \frac{e^{\sigma_0 t}}{(1+t)^{1/4}} \r^{1+\frac{M+1}{2^nN}}.\end{equation}

\subsection{Energy estimates}\label{energyestt}
Define $\mbf{v}:=\mbf{u}^{\nu}-\mbf{u}^{\textup{app}}$. Then $\mbf{v}+\mbf{u}^{\textup{app}}$ solves Navier-Stokes with an error $\mbf{R}^{\textup{app}}$, so that $\mbf{v}$ solves the equation
\begin{equation*}\begin{cases}\partial_t \mbf{v} + (\mbf{u}^{\textup{app}}\cdot \nabla) \mbf{v} + (\mbf{v}\cdot \nabla) \mbf{u}^{\textup{app}} + (\mbf{v}\cdot \nabla) \mbf{v}+ \nabla p = \sqrt{\nu}\Delta \mbf{v}-\mbf{R}^{\textup{app}};\\ 
\nabla \cdot \mbf{v} = 0; \\
\left.\mbf{v}\right|_{t=0}=0;
\end{cases} \end{equation*}
with some boundary condition which we will specify later. \\

We want to prove \eqref{eq:idk12}, so we need to find an upper bound on the $L^{2}$ norm of $v$ at time $t=T^{\nu}$. Deriving the standard energy estimate:
\begin{align*}
    \frac{1}{2}\partial_t \|\mbf{v}\|_{L^2}^2 + \sqrt{\nu}\|\nabla \mbf{v}\|_{L^2}^2-\sqrt{\nu}\int_{\partial \Omega}\partial_i v_j v_j n_i\leq \l \|\nabla \mbf{u}^{\textup{app}}\|_{L^{\infty}}+\beta\r\|\mbf{v}\|_{L^2}^2+\frac{1}{4\beta}\|\mbf{R}^{\textup{app}}\|_{L^2}^2.
\end{align*}
Because the domain is flat, we see that
$$\int_{\partial \Omega}\partial_i v_jv_jn_i = -\int_{y=0}v_1\partial_y v_1-\int_{y=0}v_2\partial_yv_2. $$
Suppose now that $\mbf{u}^{\textup{app}}$ satisfies the Navier boundary condition with an error $\mbf{r}^{\textup{app}}$, i.e.
$$\begin{cases}
\partial_y u^{\textup{app}}= \sqrt{\nu}^{1-2\gamma} u^{\textup{app}}+ r_1^{\textup{app}};\\
v^{\textup{app}}=r_2^{\textup{app}}.\end{cases} $$
Then $\mbf{v}$ satisfies the same boundary condition with error $-r^{\textup{app}}$:
$$\begin{cases} 
\partial_y v_1= \sqrt{\nu}^{1-2\gamma} v_1-r_1^{\textup{app}};\\
 v_2 =-r_2^{\textup{app}}.\end{cases}$$
Hence, for any $\alpha>0$,
    \begin{align*}\int_{y=0}v_1\partial_y v_1 &= \sqrt{\nu}^{1-2\gamma}\int_{y=0}|v_1|^2 - \int_{y=0}r^{\textup{app}}_1v_1\geq -\frac{1}{4}\nu^{\gamma-1/2}\int_{y=0}|r^{\textup{app}}_1|^2=(\star).\end{align*}
   
There are two cases. 
\begin{itemize}
    \item If $\gamma \geq 1/2$, then $\nu^{\gamma-1/2}\leq C$ for all $\nu$ small enough and the energy estimate becomes
\begin{align}\label{eq:energest}
    \frac{1}{2}\partial_t \|\mbf{v}\|_{L^2}^2+ \sqrt{\nu}\|\nabla \mbf{v}\|_{L^2}^2\leq  \l \|\nabla \mbf{u}^{\textup{app}}\|_{L^{\infty}}+\beta\r\|\mbf{v}\|_{L^2}^2+C\l \|\mbf{R}^{\textup{app}}\|_{L^2}^2+ \int_{y=0}|r_1^{\textup{app}}|^2\r-\int_{y=0}r_2^{\textup{app}}\partial_y r_2^{\textup{app}}.
\end{align}
Define
$$P:=1+\frac{M+1}{2^nN}.$$
Combining \eqref{eq:finalrem}, \eqref{eq:rapp} and \eqref{eq:r2app}, we have
\begin{equation}\label{eq:parts}\|\mbf{R}^{\textup{app}}(t)\|^2+ \|\mbf{r}^{\textup{app}}(t)\|_{L^2(y=0)}^2+\int_{y=0}|r_2^{\textup{app}}\partial_y r_2^{\textup{app}}| \leq C_M \l \nu^{N} \frac{e^{\sigma_0  t}}{(1+t)^{1/4}}\r^{2P}. \end{equation}
\item If $\gamma < 1/2$, we have 
    $$0 > \gamma-\frac{1}{2}\geq 2\l \gamma-\frac{1}{2} \r \geq 2\l \gamma-\frac{3}{4}+2^{-n}\r,$$
    hence for $\nu \leq 1$,
    $$(\star) \geq -\frac{1}{4}\nu^{2(\gamma-3/4+2^{-n})}\int_{y=0}|r^{\textup{app}}_1|^2. $$
    We can then proceed as in the previous case and use \eqref{eq:rapp2} to obtain \eqref{eq:parts}.

\end{itemize}

Next, choose $M$ large enough so that
$$ P\sigma_0-1 \geq \|\nabla \mbf{u}^{\textup{app}}\|_{L^{\infty}}+\beta\qquad \forall \nu>0.$$
For this to work, we need $\|\nabla \mbf{u}^{\textup{app}}\|_{L^{\infty}}$ to be bounded uniformly in $\nu$. The only potential issue is with $\mbf{u}^b$, as it depends on $y/\sqrt{\nu}$. Using \eqref{eq:jbest} we have, for $t\leq T^{\nu}$ as defined in \eqref{eq:time},
\begin{align*} \|\nabla \mbf{u}^{\textup{app}}(t)\|_{L^{\infty}}-\|\partial_y u_s^{\nu}(\sqrt{\nu}t)\|_{L^{\infty}}&\leq \nu^{N+a-1/4}\sum_{j=0}^M \nu^{j2^{-n}}\frac{e^{\sigma_0 k_j t}}{(1+t)^{k_j/4}} \\ 
& \leq C\nu^{a-1/4}\sum_{j=0}^M \nu^{\theta\l 1+ \frac{j}{2^nN}\r}\leq C \nu^{\theta+a-1/4}.\end{align*}
The power of $\nu$ appearing above is non-negative if and only if $\theta \geq 1/4-a$. Hence $\theta=1/4-a$ is the best value we can get in the instability. Of course, $\|\partial_y u_s^{\nu}(\sqrt{\nu}t)\|_{L^{\infty}}$ is bounded uniformly in $\nu$ by Corollary \ref{conv} or Corollary \ref{convneu}.

Now \eqref{eq:energest} becomes
\begin{align*}\label{eq:energest2}
   \partial_t\|\mbf{v}(t)\|_{L^2}^2\leq  ( 2P\sigma_0-1) \|\mbf{v}(t)\|_{L^2}^2+C_M\l \nu^{N} \frac{e^{\sigma_0 t}}{(1+t)^{1/4}}\r^{2P}.
\end{align*}
To conclude the proof, we apply Lemma \ref{gronwall} with $\varphi(t)=\|w(t)\|_{L^2}^2$, $\lambda = 2P\sigma_0-1$, $\mu = 2P\sigma_0$. We obtain
$$\|\mbf{v}(t)\|_{L^2}=\|\mbf{u}^{\nu}(t)-\mbf{u}^{\textup{app}}(t)\|_{L^2}\leq C'_M \l \nu^N\frac{ e^{\sigma_0 t}}{(1+t)^{1/4}}\r^P. $$
Choosing $\tau$ large enough in the definition of $T^{\nu}=T^{\nu}_{\theta}-\tau$, the above quantity is smaller than $\delta \nu^{\theta}$ for $t\leq T^{\nu}$, and we have verified \eqref{eq:main1}; \eqref{eq:main2} follows by the embedding $L^{\infty}\hookrightarrow \dot{H}^s$ for $s>1$. Therefore, \autoref{main} is proven.

\bibliographystyle{unsrtnat}
 
\label{sec:others}
  %%% Uncomment this line and comment out the ``thebibliography'' section below to use the external .bib file (using bibtex) .

%%% Uncomment this section and comment out the \bibliography{references} line above to use inline references.
% \begin{thebibliography}{1}

% 	\bibitem{kour2014real}
% 	George Kour and Raid Saabne.
% 	\newblock Real-time segmentation of on-line handwritten arabic script.
% 	\newblock In {\em Frontiers in Handwriting Recognition (ICFHR), 2014 14th
% 			International Conference on}, pages 417--422. IEEE, 2014.

% 	\bibitem{kour2014fast}
% 	George Kour and Raid Saabne.
% 	\newblock Fast classification of handwritten on-line arabic characters.
% 	\newblock In {\em Soft Computing and Pattern Recognition (SoCPaR), 2014 6th
% 			International Conference of}, pages 312--318. IEEE, 2014.

% 	\bibitem{hadash2018estimate}
% 	Guy Hadash, Einat Kermany, Boaz Carmeli, Ofer Lavi, George Kour, and Alon
% 	Jacovi.
% 	\newblock Estimate and replace: A novel approach to integrating deep neural
% 	networks with existing applications.
% 	\newblock {\em arXiv preprint arXiv:1804.09028}, 2018.

% \end{thebibliography}

\end{document}